\patchcmd{\ttlh@hang}{\parindent\z@}{\parindent\z@\leavevmode}{}{}
\patchcmd{\ttlh@hang}{\noindent}{}{}{}
\newcommand\numberthis{\addtocounter{equation}{1}\tag{\theequation}}
\theoremstyle{plain}
\newtheorem{theorem}{Theorem}[section]
\newtheorem{lemma}[theorem]{Lemma}
\newtheorem{proposition}[theorem]{Proposition}
\providecommand{\customgenericname}{}
\newcommand{\newcustomtheorem}[2]{%
  \newenvironment{#1}[1]
  {%
   \renewcommand\customgenericname{#2}%
   \renewcommand\theinnercustomgeneric{##1}%
   \innercustomgeneric
  }
  {\endinnercustomgeneric}
}
\newenvironment{dcases} {\left\lbrace
    \begin{aligned}}
    {\end{aligned}\right\rbrace
}
\def\Xint#1{\mathchoice
{\XXint\displaystyle\textstyle{#1}}%
{\XXint\textstyle\scriptstyle{#1}}%
{\XXint\scriptstyle\scriptscriptstyle{#1}}%
{\XXint\scriptscriptstyle\scriptscriptstyle{#1}}%
\!\int}
\def\XXint#1#2#3{{\setbox0=\hbox{$#1{#2#3}{\int}$ }
\vcenter{\hbox{$#2#3$ }}\kern-.6\wd0}}
\def\dashint{\Xint-}
\theoremstyle{definition}
\newtheorem{definition}[theorem]{Definition}
\theoremstyle{remark}
\newtheorem{remark}[theorem]{Remark}
\numberwithin{equation}{section}
\DeclareMathOperator*{\loc}{loc}
\DeclareMathOperator*{\Span}{span}
\DeclareMathOperator*{\supp}{supp}
\DeclareMathOperator*{\diag}{diag}
\DeclareMathOperator*{\esssup}{ess\,sup}
\DeclareMathOperator*{\Co}{Co}
\newcommand{\WLwr}{\mathcal{W}(L^r_w)}
\newcommand{\Schwartz}{\mathcal{S}}
\newcommand{\SP}{\mathcal{S}' (\mathbb{R}^d) / \mathcal{P} (\mathbb{R}^d)}
\newcommand{\Indicator}{\mathds{1}}
\newcommand{\TL}{\dot{\mathbf{F}}^{\alpha}_{p,q}}
\newcommand{\TLi}{\dot{\mathbf{F}}^{\alpha}_{\infty,q}}
\newcommand{\TLii}{\dot{\mathbf{F}}^{\alpha}_{\infty,\infty}}
\newcommand{\Bii}{\dot{\mathbf{B}}^{\alpha}_{\infty,\infty}}
\NewDocumentCommand\TLseq{O{\alpha}}{\dot{\mathbf{f}}^{#1}_{p,q}}
\NewDocumentCommand\PT{O{\alpha}D<>{\beta}}{\dot{\mathbf{P}}^{#1, #2}_{p,q}}
\NewDocumentCommand\PTi{O{\alpha}D<>{\beta}}{\dot{\mathbf{P}}^{#1, #2}_{\infty,q}}
\NewDocumentCommand\PTii{O{\alpha}D<>{\beta}}{\dot{\mathbf{P}}^{#1, #2}_{\infty,\infty}}
\NewDocumentCommand\PTseq{O{\alpha}D<>{\beta}}{\dot{\mathbf{p}}^{#1, #2}_{\infty,q}}
\newcommand{\vertiii}[1]{{\left\vert \kern-0.25ex
                            \left\vert \kern-0.25ex
                              \left\vert #1\right\vert\kern-0.25ex
                            \right\vert \kern-0.25ex
                          \right\vert}}
\NewDocumentCommand\DoubleStar{O{\varphi}m}{#1_{#2,\beta}^{\ast\ast}}
\renewcommand{\emptyset}{\varnothing}
\newcommand{\PosPart}[1]{#1^+}
\newcommand{\NegPart}[1]{#1^-}
\newcommand{\CalP}{\mathcal{P}}
\newcommand{\Measure}{\mathrm{m}}
\newcommand{\Lebesgue}[1]{\Measure(#1)}
\DeclareFontFamily{U}{mathx}{\hyphenchar\font45}
\DeclareFontShape{U}{mathx}{m}{n}{
      <5> <6> <7> <8> <9> <10>
      <10.95> <12> <14.4> <17.28> <20.74> <24.88>
      mathx10
      }{}
\DeclareSymbolFont{mathx}{U}{mathx}{m}{n}
\DeclareMathAccent{\widecheck}{0}{mathx}{"71}
\DeclareMathAccent{\wideparen}{0}{mathx}{"75}
\newcommand{\R}{\mathbb{R}}
\newcommand{\SC}{\mathcal{S}}
\newcommand{\N}{\mathbb{N}}
\newcommand{\Z}{\mathbb{Z}}
\newcommand{\PTalt}[1]{\dot{\mathbf{P}}^{#1, \beta}_{p,q}}
\newcommand{\PTalti}[1]{\dot{\mathbf{P}}^{#1, \beta}_{\infty,q}}
\newcommand{\PTaltii}[1]{\dot{\mathbf{P}}^{#1, \beta}_{\infty,\infty}}
\title[Anisotropic Triebel-Lizorkin spaces and wavelet coefficient decay, II]
      {Anisotropic Triebel-Lizorkin spaces and  wavelet coefficient decay over  one-parameter dilation groups, II}
\author[S. Koppensteiner]{Sarah Koppensteiner}
\address{Faculty of Mathematics,
University of Vienna,
Oskar-Morgenstern-Platz 1,
A-1090 Vienna, Austria}
\email{sarah.koppensteiner@univie.ac.at}
\author[J.T. van Velthoven]{Jordy Timo van Velthoven}
\address{Delft University of Technology,
Mekelweg 4, Building 36,
2628 CD Delft, The Netherlands}
\email{j.t.vanvelthoven@tudelft.nl}
\author[F. Voigtlaender]{Felix Voigtlaender}
\address{
Katholische Universit\"at Eichst\"att-Ingolstadt,
Mathematical Institute for Machine Learning and Data Science (MIDS),
Research group \emph{Reliable Machine Learning},
Ostenstrasse 26,
85072 Eichst\"att,
Germany
}
\email{felix.voigtlaender@ku.de}
\subjclass[2020]{42B25, 42B35, 42C15, 42C40, 46B15}
\keywords{Anisotropic Triebel-Lizorkin spaces, Maximal functions, Anisotropic wavelet systems,
Coorbit molecules, Frames, Riesz sequences, One-parameter groups.}
\begin{document}

\maketitle

\begin{abstract}
Continuing previous work, this paper provides maximal characterizations of anisotropic Triebel-Lizorkin spaces $\TL$ for the endpoint case of $p = \infty$ and the full scale of parameters $\alpha \in \mathbb{R}$ and $q \in (0,\infty]$. In particular, a Peetre-type characterization of the anisotropic Besov space $\dot{\mathbf{B}}^{\alpha}_{\infty,\infty} = \dot{\mathbf{F}}^{\alpha}_{\infty,\infty}$
is obtained. As a consequence, it is shown that there exist dual molecular frames and Riesz sequences in $\TLi$.
\end{abstract}

\section{Introduction}
In a previous paper \cite{KvVV2021anisotropic}, we obtained characterizations of anisotropic Triebel-Lizorkin spaces $\TL$, with $\alpha \in \mathbb{R}$, $p \in (0,\infty)$ and $q \in (0,\infty]$, in terms of Peetre-type maximal functions and continuous wavelet transforms. In addition, as an application of these characterizations, it was shown that these spaces admit molecular dual frames and Riesz sequences.
The purpose of the present paper is to provide analogous results for the endpoint case of $p = \infty$.

For defining the anisotropic Triebel-Lizorkin spaces, let $A \in \mathrm{GL}(d, \mathbb{R})$ be an expansive matrix, i.e., $|\lambda|>1$ for all $\lambda \in \sigma(A)$, and let $\varphi \in \mathcal{S} (\mathbb{R}^d)$ be such that it has compact Fourier support
\begin{align}\label{eq:support1}
  \supp \widehat{\varphi}
  = \overline{
      \{
        \xi \in \mathbb{R}^d : \widehat{\varphi} (\xi) \neq 0
      \}
    }
  \subset \mathbb{R}^d \setminus \{ 0 \}
\end{align}
and satisfies
\begin{align}\label{eq:support2}
  \sup_{j \in \mathbb{Z}}
    | \widehat{\varphi} ((A^*)^j \xi) |
  > 0,
  \quad \xi \in \mathbb{R}^d \setminus \{0\},
\end{align}
with $A^*$ denoting the transpose of $A$.
For $j \in \mathbb{Z}$, denote its dilation by $\varphi_j = |\det A|^j \varphi (A^j \cdot)$.
The associated (homogeneous) \emph{anisotropic Triebel-Lizorkin space}
$\TLi = \TLi(A,\varphi)$, with $\alpha \in \mathbb{R}$ and $q \in (0, \infty]$,
is defined as the collection of all tempered distributions $f \in \mathcal{S}'(\mathbb{R}^d)$ (modulo polynomials) that satisfy
\begin{align} \label{eq:qfinite_intro}
  \| f \|_{\TLi}
  := \sup_{\ell \in \Z, k \in \Z^d} \bigg(
  \frac{1}{|\det A|^{\ell}} \int_{A^\ell ([0,1]^d + k)} \sum_{j=-\ell}^\infty \big( |\det A|^{\alpha j}  |(f \ast \varphi_j)(x)| \big)^q
   \, dx
     \bigg)^{1/q}
  < \infty
 \end{align}
 if $q < \infty$, and
\begin{align} \label{eq:qinfinite_intro}
  \| f \|_{\TLii} := \sup_{\ell \in \Z, k \in \Z^d} \sup_{j \in \Z, j
    \geq -\ell} \bigg( \frac{1}{|\det A|^{\ell}}
  \int_{A^\ell ([0,1]^d + k)} |\det A|^{\alpha j}  |(f \ast \varphi_j)(x)|
  \, dx \bigg ) < \infty.
\end{align}
Note that the $\ell^{\infty} (\mathbb{Z}_{\geq -\ell})$-norms in \Cref{eq:qinfinite_intro} are positioned outside of the integral,
whereas the $\ell^q(\mathbb{Z}_{\geq -\ell})$-norms in \Cref{eq:qfinite_intro} are part of the integrand.

In contrast to the usual quasi-norms defining (anisotropic) Triebel-Lizorkin spaces $\TL$ for $p < \infty$ (see, e.g., \cite{bownik2007anisotropic, bownik2008duality, bownik2006atomic, KvVV2021anisotropic}), the quantities \eqref{eq:qfinite_intro} and \eqref{eq:qinfinite_intro} consider only averages over small scales. The quasi-norms \eqref{eq:qfinite_intro} and \eqref{eq:qinfinite_intro} can therefore be considered as  \textquotedblleft localized versions" of the immediate analogue of the quasi-norms defining $\TL$ for $p< \infty$, which would lead to an unsatisfactory definition of $\TLi$, see \cite[Section 5]{frazier1990discrete} and the references therein.

The above definition of $\TLi$ follows Bownik \cite[Section 3]{bownik2007anisotropic} (see also \cite[Section 5]{frazier1990discrete}) for $q < \infty$,
but differs from \cite{bownik2007anisotropic, frazier1990discrete} for $q=\infty$, where $\TLii$ is instead defined via the quasi-norm
\begin{align} \label{eq:FB}
\| f \|_{\TLii} = \| f \|_{\Bii} :=  \sup_{j \in \mathbb{Z}} |\det A|^{\alpha j} \| f \ast \varphi_j \|_{L^{\infty}} < \infty,
\end{align}
with $\Bii$ denoting the (anisotropic) Besov space \cite{bownik2005atomic}.
The quasi-norm \eqref{eq:qinfinite_intro} is an anisotropic version of the definition given by Bui and Taibleson \cite{bui2000characterization},
which (as a consequence of our main results) will be shown to be equivalent to \eqref{eq:FB}, like for isotropic dilations \cite[Theorem 3]{bui2000characterization}.

\subsection{Maximal characterizations}
As in \cite{KvVV2021anisotropic}, we assume additionally that the expansive matrix $A \in \mathrm{GL}(d, \mathbb{R})$ is \emph{exponential}, in the sense that $A = \exp(B)$ for some matrix $B \in \mathbb{R}^{d \times d}$. The power of $A$ is then defined as $A^s := \exp(sB)$ for $s \in \mathbb{R}$.

For $\varphi \in \mathcal{S}(\mathbb{R}^d)$, $s \in \mathbb{R}$ and $\beta > 0$, the associated Peetre-type maximal function of $f \in \mathcal{S}' (\mathbb{R}^d)$ is defined by
\[
  \DoubleStar{s} f : \mathbb{R}^d \to [0, \infty], \quad x \mapsto
   \sup_{z \in \mathbb{R}^d}
       \frac{|f \ast \varphi_s (x + z)|}
            {(1 + \rho_A (A^{s} z))^{\beta}},
\]
where $\varphi_s := |\det A|^s \varphi (A^s \cdot)$ and where $\rho_A : \mathbb{R}^d \to [0,\infty)$
denotes the step homogeneous quasi-norm associated with $A$ (cf. \Cref{sub:expansive}).

The following Peetre-type maximal characterizations of $\TLi$ will be proven in Section \ref{sec:maximal}. Here, the notation $\dashint_Q$ means the average integral over a measurable set $Q \subseteq \mathbb{R}^d$ of positive measure.

\begin{theorem}\label{thm:main1_intro}
  Let $A \in \mathrm{GL}(d, \mathbb{R})$ be expansive and exponential and let $\alpha \in \mathbb{R}$.
  Assume that $\varphi \in \Schwartz(\R^d)$ has compact Fourier support
  satisfying the support conditions \eqref{eq:support1} and \eqref{eq:support2}.

  For  $q \in (0, \infty)$
  and $\beta >  1/q$, the norm equivalences
  \begin{equation}
    \begin{aligned}\label{eq:norm_equiv}
      \| f \|_{\TLi} & \asymp \sup_{\ell \in \Z, k \in \Z^d}
      \bigg(\; \dashint_{A^\ell
        ([0,1]^d + k)} \int_{-\ell}^\infty \big( |\det A|^{\alpha s}
       \DoubleStar{s} f(x) \big)^q \, ds \, dx
      \bigg)^{1/q} \\
      & \asymp \sup_{\ell \in \Z, k \in \Z^d}
      \bigg(\; \dashint_{A^\ell (
        [0,1]^d + k)} \sum_{j=-\ell}^\infty \big( |\det A|^{\alpha j}
       \DoubleStar{j} f(x) \big)^q \, dx \bigg)^{1/q}
    \end{aligned}
  \end{equation}
  hold for all $f \in \SP$. For $q = \infty$ and
  $\beta >1$, the following equivalences hold
  \begin{equation}
    \begin{aligned}\label{eq:norm_equiv_infty}
      \| f \|_{\TLii} & \asymp \sup_{\ell \in \Z, k \in \Z^d} \sup_{s
        \in \R, s \geq - \ell} \bigg( \dashint_{A^\ell ([0,1]^d + k)} |\det A|^{\alpha s}
      \DoubleStar{s} f(x) \, dx \bigg)
      \\
      & \asymp \sup_{\ell \in \Z, k \in \Z^d} \sup_{j \in \Z, j \geq -
        \ell} \bigg( \dashint_{A^\ell ([0,1]^d + k)} |\det A|^{\alpha j}
      \DoubleStar{j} f(x) \, dx  \bigg)
    \end{aligned}
  \end{equation}
  for all $f \in \SP$.
\end{theorem}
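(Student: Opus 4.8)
```latex
\textbf{Proof proposal.}
The plan is to follow the classical strategy for Peetre-type maximal characterizations (as in \cite{KvVV2021anisotropic} for $p<\infty$), adapted to the localized quasi-norms \eqref{eq:qfinite_intro}--\eqref{eq:qinfinite_intro}. The four displayed quantities in \eqref{eq:norm_equiv} and \eqref{eq:norm_equiv_infty} naturally split into a chain of inequalities: the (discrete or continuous) plain convolution quasi-norm is trivially dominated by the corresponding Peetre-type quantity, since $|f\ast\varphi_s(x)|\le\DoubleStar{s}f(x)$ (take $z=0$); and the continuous versions dominate (up to constants) the discrete ones, because the integrand is, up to a fixed factor, monotone-comparable over unit-length intervals in the scale variable, using that $\DoubleStar{s}f$ and $\DoubleStar{j}f$ for $|s-j|\le 1$ control each other via the quasi-norm structure of $\rho_A$ and the smoothness of $\varphi$ (this is a standard ``change of scale'' estimate; I would cite or reproduce the relevant lemma from the previous paper). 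Hence the crux is the single reverse inequality: bounding the Peetre-type quantity by $\|f\|_{\TLi}$ (resp. $\|f\|_{\TLii}$).

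For that reverse direction I would use the reproducing/Calder\'on-type identity: choose $\psi\in\Schwartz(\R^d)$ with compact Fourier support away from the origin such that $\sum_{j\in\Z}\widehat{\psi}((A^*)^j\xi)\,\widehat{\varphi}((A^*)^j\xi)=1$ for $\xi\ne0$, giving $f=\sum_{j\in\Z} f\ast\varphi_j\ast\psi_j$ in $\SP$. Applying this at scale $s$, one writes $f\ast\varphi_s = \sum_{j} (f\ast\varphi_{s+j})\ast\eta_{s,j}$ for suitable Schwartz functions $\eta_{s,j}$ whose $L^1$-type norms decay geometrically in $|j|$ (this is where expansiveness and the band-limited hypotheses \eqref{eq:support1}--\eqref{eq:support2} enter, together with the exponential assumption $A=\exp(B)$ allowing non-integer scales). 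Estimating $\DoubleStar{s}f$ then reduces, after absorbing the weight $(1+\rho_A(A^sz))^{-\beta}$, to controlling anisotropic maximal averages
\[
  \sup_{z}\frac{\big(|f\ast\varphi_{s+j}|\ast|\eta_{s,j}|\big)(x+z)}{(1+\rho_A(A^sz))^\beta}
  \;\lesssim\; C_j\,\big(\mathcal{M}_{A,t}\,|f\ast\varphi_{s+j}|\big)(x)^{1/t}
\]
for a well-chosen $t<\min\{1,q\}$, where $\mathcal{M}_{A,t}$ is the anisotropic Hardy--Littlewood maximal operator adapted to the quasi-norm $\rho_A$; here the condition $\beta>1/q$ (resp. $\beta>1$ when $q=\infty$) is exactly what is needed so that $t<q$ can be chosen with $\beta t>1$, making the geometric series in $j$ converge after applying the maximal estimate.

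The remaining, and in my view \emph{main}, obstacle is purely the $p=\infty$ localization: unlike the $p<\infty$ case one cannot simply apply the vector-valued Fefferman--Stein inequality globally, because the quasi-norm is a supremum over dyadic-anisotropic cubes $Q_{\ell,k}=A^\ell([0,1]^d+k)$ of \emph{averages} over those cubes. I would handle this by the standard Frazier--Jawerth/Bownik device of splitting, for a fixed reference cube $Q=Q_{\ell,k}$ and each scale $j\ge-\ell$ entering the reproducing sum, the function $f\ast\varphi_{s+j}$ into a ``near'' part supported in a fixed dilate $c\,Q$ and a ``far'' part; the near part is controlled by the maximal operator restricted to $cQ$ and then by $\|f\|_{\TLi}$ after summing in $j$ (using $s+j\ge-\ell$ up to finitely many exceptional terms, which are handled by the band-limited Nikol'skii inequality and the polynomial-modulo convention), while the far part contributes a rapidly decaying tail because the kernel $\eta_{s,j}$ has fast (anisotropic) off-diagonal decay and $x\in Q$ while the far part lives outside $cQ$. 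Summing the geometric contributions in $j$ and taking the supremum over $\ell,k$ closes the estimate; the $q=\infty$ case is the same argument with the $\ell^q$-sum replaced by a supremum and $\beta>1$ in place of $\beta>1/q$. I expect the bookkeeping for the finitely many scales $j$ with $s+j<-\ell$ (where the ``only small scales'' feature of the localized quasi-norm bites) to require the most care, and this is where I would invoke the band-limitedness to trade a missing coarse-scale term for an acceptable error.
```
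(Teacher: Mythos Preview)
Your plan is essentially correct and would work, but the paper organizes the hard direction differently, and the difference is worth noting. You propose to prove the reverse inequality (Peetre quantity $\lesssim \|f\|_{\TLi}$) by feeding the reproducing formula into $f\ast\varphi_s$, then invoking a pointwise Peetre-type bound $\DoubleStar{s}f(x)\lesssim (\mathcal{M}_{A}|f\ast\varphi_{s+j}|^t(x))^{1/t}$, and finally doing a near/far split to handle the $p=\infty$ localization. The paper instead isolates a single \emph{sub-mean-value property} (their Proposition~3.2),
\[
  \bigl(\DoubleStar{j}f(x)\bigr)^q
  \;\lesssim\;
  |\det A|^{j}\int_{\R^d}\frac{|(f\ast\varphi_j)(y)|^q}{(1+\rho_A(A^j(x-y)))^{\beta q}}\,dy,
\]
valid for every $q\in(0,\infty)$ and $\beta>0$, and uses it as the sole engine for the reverse inequality. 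This is an integral bound of the $q$-th power at the \emph{same} scale $j$; no reproducing-formula sum over neighboring scales enters this direction at all. The near/far split (your ``Frazier--Jawerth/Bownik device'') is then applied directly to this integral over $y\in\R^d$: one decomposes $\R^d=Q_\ell(x)\cup\bigcup_{k\ge0}(Q_{\ell+k+1}(x)\setminus Q_{\ell+k}(x))$, and the near piece is handled by the \emph{scalar} Hardy--Littlewood maximal inequality in $L^r$ with $r=\sqrt{\beta q}>1$ (after inserting an indicator $\mathds{1}_{Q_{\ell+M}(w)}$), while the far annuli are estimated by the explicit decay of the weight together with Minkowski and Jensen. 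The reproducing formula is used only in the \emph{other} direction (discrete plain $\lesssim$ continuous Peetre), where it is genuinely needed to change from integer to real scales.

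Two practical remarks on your version. First, your displayed maximal bound should read $(\mathcal{M}_{A}|f\ast\varphi_{s+j}|^t)(x)^{1/t}$ rather than with an extra $^{1/t}$. Second, your anticipated ``main bookkeeping obstacle'' --- the scales $s+j<-\ell$ --- evaporates: because $\widehat\varphi$ has compact support away from $0$, the reproducing sum is \emph{finite}, $|j|\le N=N(\varphi,A)$, so only scales $\ge -\ell-N$ occur. These are absorbed simply by enlarging the averaging cube from $Q_\ell$ to $Q_{\ell+N}$ (costing a harmless factor $|\det A|^N$), which is exactly what the paper does in its Steps~1 and~3; no Nikol'skii-type trade-off is required.
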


Theorem \ref{thm:main1_intro} provides an extension of \cite[Theorem 1]{bui2000characterization} to arbitrary expansive dilations,
and appears to be new even for the commonly studied setting of diagonal dilation matrices $A = \diag( \alpha_1, ..., \alpha_d)$ with given anisotropy $ (\alpha_1, ..., \alpha_d) \in (1,\infty)^d$.

The proof method of Theorem \ref{thm:main1_intro} is modeled on the proof of the maximal characterizations of Triebel-Lizorkin spaces $\TL$ with $p < \infty$ given in \cite{KvVV2021anisotropic}. In particular, it combines a sub-mean-value property of the Peetre-type maximal function (\Cref{prop:peetre-estimate}) with maximal inequalities. See also \cite{liu2019littlewood1, ullrich2012continuous, liang2012new, rychkov1991on} for similar approaches.
Besides the similarities in the approach, the calculations in the proof of Theorem \ref{thm:main1_intro} differ non-trivially from these in \cite[Theorem 3.5]{KvVV2021anisotropic} as only averages over small scales appear in the definition of $\TLi$.

As a consequence of \Cref{thm:main1_intro}, we show in  \Cref{sec:pqinfty} the coincidence $\TLii = \Bii$ mentioned above.

\subsection{Molecular decompositions}
For an expansive and exponential matrix $A \in \mathrm{GL}(d, \mathbb{R})$, denote by $G_A = \mathbb{R}^d \rtimes_A \mathbb{R}$ the associated semi-direct product group. Then $G_A$ acts unitarily on $L^2 (\mathbb{R}^d)$ via the quasi-regular representation $\pi$, defined by
\begin{align} \label{eq:dilate_tranlate}
  \pi(x,s) f = |\det A|^{-s/2} f(A^{-s} (\cdot - x)),
  \quad (x,s) \in \mathbb{R}^d \times \mathbb{R}, \; f \in L^2 (\mathbb{R}^d).
\end{align}
A vector $\psi \in L^2 (\mathbb{R}^d)$ is called \emph{admissible} if the associated wavelet transform
\[
W_{\psi} : L^2(\mathbb{R}^d) \to L^{\infty} (G_A), \quad W_{\psi} f = \langle f, \pi(\cdot) \psi \rangle,
\]
defines an isometry into $L^2 (G_A)$. The existence of admissible vectors and associated Calder\'on-type reproducing formulae for this representation have been studied, among others, in
\cite{laugesen2002characterization, currey2016integrable, groechenig1992compact, fuehr2002continuous}. The assumption that $A$ is expansive is essential for the existence of admissible vectors $\psi \in \mathcal{S} (\mathbb{R}^d)$ satisfying $\widehat{\psi} \in C^{\infty}_c (\mathbb{R}^d \setminus \{0\})$ (cf. \Cref{lem:admissible_expansive}), which play an important role in this paper.

A countable family $(\phi_{\gamma} )_{\gamma \in \Gamma}$ of functions $\phi_{\gamma} \in L^2 (\mathbb{R}^d)$ parametrized by a discrete set $\Gamma \subset G_A$ is called a \emph{molecular system} if there exists a function $\Phi \in \WLwr \subset L^1 (G_A)$ such that
\begin{align} \label{eq:molecule_def_intro}
|W_{\psi} \phi_{\gamma} (g) | = |\langle \phi_{\gamma} , \pi(g) \psi \rangle | \leq \Phi (\gamma^{-1} g), \quad \gamma \in \Gamma, \; g \in G.
\end{align}
The space $\WLwr$ denotes a weighted Wiener amalgam space for $r = \min\{1, q\}$ and the standard control weight $w = w_{\infty, q}^{\alpha', \beta} : G_A \to [1,\infty)$; see \Cref{sec:molecular} for further details.

It should be mentioned that any family $(\pi(\gamma) \phi)_{\gamma \in \Gamma}$ for suitable $\phi \in L^2 (\mathbb{R}^d)$ defines a molecular system in the sense of \eqref{eq:molecule_def} with $\Phi = |W_{\psi} \phi|$, but that generally a molecular system $(\phi_{\gamma} )_{\gamma \in \Gamma}$ does not need to consist of translates and dilates of a fixed function $\phi$. Nevertheless,
general molecules $(\phi_{\gamma} )_{\gamma \in \Gamma}$ share many properties with atoms $ (\pi(\gamma) \phi)_{\gamma \in \Gamma}$, see, e.g., \cite{grochenig2009molecules,velthoven2022quasi}.

The following theorem provides decomposition theorems of $\TLi$ in terms of molecules.

\begin{theorem} \label{thm:main2_intro}
Let $A \in \mathrm{GL}(d, \mathbb{R})$ be expansive and exponential. For $\alpha \in \mathbb{R}$, $q \in (0, \infty]$,  let $r := \min\{1, q\}$ and let $\alpha' = \alpha + 1/2 - 1/q$ if $q < \infty$ and $\alpha' = \alpha + 1/2$, otherwise. Let $\beta > 1/q$ if $q < \infty$ and $\beta > 1$ otherwise.

Suppose $\psi \in L^2 (\mathbb{R}^d)$ is admissible satisfying $W_{\psi} \psi \in \WLwr$ for the standard control weight $w = w_{\infty, q}^{- \alpha', \beta} : G_A \to [1,\infty)$ defined in \Cref{lem:ControlWeights}. Additionally, suppose that $W_{\varphi} \psi \in \WLwr$ for some (equivalently, all) admissible $\varphi \in \mathcal{S}_0 (\mathbb{R}^d)$.
Then there exists a compact unit neighborhood $U \subset G_A$ with the following property: For any discrete set $\Gamma \subset G_A$ satisfying
\begin{align} \label{eq:well-spread}
G_A = \bigcup_{\gamma \in \Gamma} \gamma U \quad \text{and} \quad  \sup_{g \in G_A} \# (\Gamma \cap g U) < \infty,
\end{align}
there exists a molecular system $(\phi_{\gamma})_{\gamma \in \Gamma}$
such that any $f \in \TLi$ admits the expansion
\[
f = \sum_{\gamma \in \Gamma} \langle f, \pi(\gamma) \psi \rangle \phi_{\gamma} = \sum_{\gamma \in \Gamma} \langle f,  \phi_{\gamma}  \rangle  \pi(\gamma) \psi,
\]
where the series converges unconditionally in the weak$^*$-topology of $\SP$.
\end{theorem}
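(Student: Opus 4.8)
The plan is to deduce Theorem~\ref{thm:main2_intro} from the abstract machinery of coorbit theory, using the maximal characterization in Theorem~\ref{thm:main1_intro} as the crucial link between $\TLi$ and a coorbit space attached to the quasi-regular representation $\pi$ of $G_A$. First I would identify $\TLi$ (up to canonical identification of $f$ with its wavelet transform) as the coorbit space $\mathrm{Co}(Y)$ of a suitable solid, translation-invariant quasi-Banach function space $Y = Y^{\alpha,\beta}_{\infty,q}$ on $G_A$; concretely, $Y$ should be the space in which the Peetre-controlled averages $\sup_{\ell,k} \dashint_{A^\ell([0,1]^d+k)} \big(\int_{-\ell}^\infty (|\det A|^{\alpha s}\,\cdot\,)^q\,ds\big)^{1/q}$ are finite, transported from $\mathbb{R}^d$ to $G_A = \mathbb{R}^d \rtimes_A \mathbb{R}$. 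Theorem~\ref{thm:main1_intro} guarantees that for admissible $\varphi\in\mathcal{S}_0(\mathbb{R}^d)$ with compact Fourier support away from the origin, $\|f\|_{\TLi} \asymp \|W_\varphi f\|_{Y}$, and that this is independent of the analyzing window within the allowed class; this is exactly the "correctness of the coorbit description" needed to invoke the discretization theorems.

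Next I would verify the hypotheses of the general coorbit discretization results (in the quasi-Banach, possibly non-integrable setting — e.g.\ the framework of \cite{velthoven2022quasi}, building on \cite{grochenig2009molecules}): one needs (i) a cyclic admissible vector $\psi$ whose reproducing kernel $W_\psi\psi$ lies in the Wiener amalgam space $\WLwr$ with $r = \min\{1,q\}$ for the control weight $w = w_{\infty,q}^{-\alpha',\beta}$ — this is assumed directly in the statement; (ii) the "better vector" condition $W_\varphi\psi \in \WLwr$ for one (hence all) admissible $\varphi\in\mathcal{S}_0(\mathbb{R}^d)$, also assumed; and (iii) the control weight $w$ is submultiplicative (up to equivalence) and dominates the modulus of the action of $G_A$ on $Y$, which is the content of \Cref{lem:ControlWeights}/\Cref{lem:admissible_expansive}. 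With these in hand, the abstract theory produces a compact unit neighborhood $U\subset G_A$ such that every $U$-dense, relatively separated $\Gamma$ (i.e.\ satisfying \eqref{eq:well-spread}) yields both a frame $(\pi(\gamma)\psi)_{\gamma\in\Gamma}$ for $\mathrm{Co}(Y)$ and a dual system. The dual system is, by the general construction, a molecular system: the dual atoms $\phi_\gamma$ satisfy $|W_\psi\phi_\gamma(g)| \le \Phi(\gamma^{-1}g)$ for a single envelope $\Phi\in\WLwr$, which is precisely \eqref{eq:molecule_def_intro}. Unconditional weak$^*$-convergence in $\SP$ follows from the coorbit convergence together with the continuous embedding $\mathrm{Co}(Y)\hookrightarrow \SP$ (which itself comes from $W_\varphi$ mapping $\SP$ continuously into weighted spaces and the reconstruction formula).

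The main obstacle, I expect, is step one: establishing rigorously that $\TLi$ as defined by \eqref{eq:qfinite_intro}--\eqref{eq:qinfinite_intro} \emph{is} a coorbit space with an honest solid translation-invariant function-space norm on $G_A$, and that Theorem~\ref{thm:main1_intro} really certifies $\|f\|_{\TLi}\asymp\|W_\varphi f\|_Y$ with window-independence. The difficulty is that, unlike the $p<\infty$ case, the defining quasi-norm only averages over \emph{small scales} $j\ge-\ell$ (equivalently $s\ge-\ell$), so the relevant $Y$ is not simply a weighted mixed-norm Lebesgue space on $G_A$; it is a "tent-type" or "local Hardy" variant, and one must check it is complete, solid, carries an isometric (or bounded) left action of $G_A$ with a polynomially controlled weight, and contains the reservoir needed for coorbit theory. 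Once $Y$ is correctly set up, the verification that the standard control weight $w_{\infty,q}^{-\alpha',\beta}$ (with the stated $\alpha' = \alpha + 1/2 - 1/q$, resp.\ $\alpha+1/2$) is adapted to $Y$ — the $1/2$ coming from the unitary normalization in \eqref{eq:dilate_tranlate} and the $1/q$ from comparing the $ds$-integral on $G_A$ with the discrete sum — is a bookkeeping matter, as is checking that $\psi\in\mathcal{S}(\mathbb{R}^d)$ with $\widehat\psi\in C_c^\infty(\mathbb{R}^d\setminus\{0\})$ furnished by \Cref{lem:admissible_expansive} automatically satisfies all window hypotheses. A secondary technical point is that $Y$ may be only a quasi-Banach space when $q<1$, so one must use the $r$-triangle-inequality versions of the amalgam estimates throughout and invoke the quasi-Banach coorbit discretization machinery rather than the classical Feichtinger--Gröchenig theory; this is why $\WLwr$ with $r=\min\{1,q\}$ appears in place of $\WLw$.
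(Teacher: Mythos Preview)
Your proposal is correct and follows essentially the same route as the paper: identify $\TLi$ with the coorbit space $\Co(\PTalti{-\alpha'})$ via the Peetre-type function space on $G_A$ (using Theorem~\ref{thm:main1_intro} for the norm equivalence), verify that $\PTalti{-\alpha'}$ is a solid translation-invariant quasi-Banach space satisfying the $r$-norm property with standard control weight $w_{\infty,q}^{-\alpha',\beta}$, and then invoke the abstract discretization theorem of \cite{velthoven2022quasi} to obtain the molecular dual frame. Your diagnosis of the main obstacle---rigorously setting up the ``tent-type'' space $Y$ that only sees small scales and checking its translation bounds---is exactly where the paper concentrates its effort (Lemmas~\ref{lem:TranslationNormBounds}--\ref{lem:ControlWeights} and Proposition~\ref{prop:TL_coorbit}).
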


Atomic decompositions of the anisotropic spaces $\TLi$ have been obtained earlier by Bownik \cite{bownik2007anisotropic}. However, Theorem \ref{thm:main2_intro} provides a frame decomposition of all elements $f \in \TLi$ in terms of the atoms $(\pi(\gamma) \psi)_{\gamma \in \Gamma}$ and molecules $(\phi_{\gamma})_{\gamma \in \Gamma}$, whereas the atoms in \cite[Theorem 5.7]{bownik2007anisotropic} depend on the element $f \in \TLi$ that is represented. For anisotropic Triebel-Lizorkin spaces $\TL$ with $p < \infty$, decompositions as in Theorem \ref{thm:main2_intro} were obtained in \cite{ho2003frames, KvVV2021anisotropic}, but they appear to be new for the case of $p = \infty$. In fact, Theorem \ref{thm:main2_intro} seems even valuable for merely isotropic dilations, where the state-of-the-art \cite{gilbert2002smooth} excludes the case $p = \infty$.

Theorem \ref{thm:main2_intro} will be obtained from the recent results on dual molecules \cite{velthoven2022quasi, romero2020dual} through the identification of $\TLi$ with a coorbit space; see \Cref{prop:TL_coorbit}. This identification appears to be new for the full scale of $\TLi$ with $\alpha \in \mathbb{R}$ and $q \in (0, \infty]$, even for isotropic dilations.

In addition to the existence of dual molecular frames, we also obtain a corresponding result for Riesz sequences. Here, the space $\dot{\mathbf{p}}^{- \alpha', \beta}_{\infty, q}$ denotes a sequence space associated to $\TLi$; see  \Cref{def:peetre_seq} for its precise definition.

\begin{theorem} \label{thm:main3_intro}
With assumptions and notations as in \Cref{thm:main2_intro}, the following holds:

There exists a compact unit neighborhood $U \subset G_A$ with the following property: For any discrete set $\Gamma \subset G_A$ satisfying
\begin{align} \label{eq:separated}
\gamma U \cap \gamma' U = \emptyset \quad \text{for all} \quad \gamma, \gamma' \in \Gamma \quad \text{with} \quad \gamma \neq \gamma',
\end{align}
there exists a molecular system $(\phi_{\gamma})_{\gamma \in \Gamma}$ in $\overline{\Span} \{ \pi(\gamma) \psi : \gamma \in \Gamma \}$
such that the distribution  $f := \sum_{\gamma \in \Gamma} c_{\gamma} \phi_{\gamma} \in \TLi$ forms a solution to the moment problem
\[
\langle f, \pi(\gamma) \psi \rangle = c_{\gamma}, \quad \gamma \in \Gamma
\]
for any given $(c_{\gamma} )_{\gamma \in \Gamma} \in \dot{\mathbf{p}}^{- \alpha', \beta}_{\infty, q} \leq \mathbb{C}^{\Gamma}$.
\end{theorem}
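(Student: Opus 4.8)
The plan is to derive \Cref{thm:main3_intro} as the Riesz-sequence counterpart of \Cref{thm:main2_intro}, exploiting the same identification of $\TLi$ with a coorbit space $\mathrm{Co}(\pi, \mathcal{Y})$ established in \Cref{prop:TL_coorbit}, where $\mathcal{Y}$ is the two-sided invariant solid function space on $G_A$ whose associated sequence space is $\dot{\mathbf{p}}^{-\alpha',\beta}_{\infty,q}$. First I would invoke the abstract dual-molecule machinery of \cite{velthoven2022quasi, romero2020dual}: under the hypotheses on $\psi$ (admissibility, $W_\psi\psi\in\WLwr$ for the control weight, and $W_\varphi\psi\in\WLwr$ for admissible $\varphi\in\mathcal{S}_0(\mathbb{R}^d)$), the wavelet system $\pi(\Gamma)\psi$ is a coorbit \emph{molecule}, and the general theory provides a threshold for the density/separation of $\Gamma$ below which $(\pi(\gamma)\psi)_{\gamma\in\Gamma}$ is not merely a frame but — when $\Gamma$ is $U$-separated in the sense of \eqref{eq:separated} for a sufficiently small compact unit neighborhood $U$ — a \emph{Riesz sequence} for the coorbit space, together with a dual molecular system $(\phi_\gamma)_{\gamma\in\Gamma}$ lying in $\overline{\Span}\{\pi(\gamma)\psi:\gamma\in\Gamma\}$ and satisfying the biorthogonality-type relation that makes the analysis and synthesis maps mutually inverse on the relevant sequence space.

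The key steps, in order, are: (1) record that $\TLi=\mathrm{Co}(\pi,\mathcal{Y})$ with equivalence of (quasi-)norms via \Cref{prop:TL_coorbit}, and that the coefficient space $\mathcal{Y}_d$ arising from sampling $\mathcal{Y}$ on $U$-separated sets is exactly (isomorphic to) $\dot{\mathbf{p}}^{-\alpha',\beta}_{\infty,q}$, uniformly in $\Gamma$ satisfying \eqref{eq:separated}; (2) apply the Riesz-sequence half of the dual-molecule theorems (e.g. the relevant statement in \cite{velthoven2022quasi}) to the molecule $\psi$, yielding a compact unit neighborhood $U$ such that for every $U$-separated $\Gamma$ the synthesis operator $c\mapsto\sum_{\gamma}c_\gamma\,\pi(\gamma)\psi$ is bounded and bounded below from $\mathcal{Y}_d$ into $\mathrm{Co}(\pi,\mathcal{Y})$, and there is a dual molecular system $(\phi_\gamma)_{\gamma\in\Gamma}\subset\overline{\Span}\{\pi(\gamma)\psi:\gamma\in\Gamma\}$ with $\langle\pi(\gamma')\psi,\phi_\gamma\rangle=\delta_{\gamma,\gamma'}$; (3) given $(c_\gamma)_{\gamma\in\Gamma}\in\dot{\mathbf{p}}^{-\alpha',\beta}_{\infty,q}$, set $f:=\sum_{\gamma\in\Gamma}c_\gamma\phi_\gamma$, which converges in the weak$^*$-topology of $\SP$ and lies in $\TLi$ by the boundedness of the synthesis map for $(\phi_\gamma)$; (4) compute $\langle f,\pi(\gamma)\psi\rangle=\sum_{\gamma'}c_{\gamma'}\langle\phi_{\gamma'},\pi(\gamma)\psi\rangle=c_\gamma$ using the biorthogonality from step (2), and justify interchanging the pairing with the (weak$^*$-convergent, unconditional) sum via the membership $W_\psi\psi\in\WLwr$ together with the amalgam-space estimates already used in the proof of \Cref{thm:main2_intro}.

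I expect the main obstacle to be step (1) together with the bookkeeping in step (2): one must verify that the coorbit-space framework of \cite{velthoven2022quasi, romero2020dual} applies verbatim to the two-sided invariant space $\mathcal{Y}$ underlying $\TLi$ for the \emph{full} range $\alpha\in\mathbb{R}$, $q\in(0,\infty]$ — in particular at the endpoint $q=\infty$, where $\mathcal{Y}$ is a mixed-norm space with the supremum outside the integral and $r=\min\{1,q\}=1$ — and that the hypotheses on $\psi$ in \Cref{thm:main2_intro} are precisely the ones needed to invoke the Riesz-sequence conclusion (and not just the frame conclusion) of that theory. A secondary subtlety is identifying the abstract coefficient space $\mathcal{Y}_d$ with the concrete sequence space $\dot{\mathbf{p}}^{-\alpha',\beta}_{\infty,q}$ of \Cref{def:peetre_seq}, including the correct shift of the smoothness parameter by $1/2-1/q$ coming from the unitary normalization \eqref{eq:dilate_tranlate} of $\pi$; once this identification and the applicability of the dual-molecule theorems are in place, the remaining steps (3)–(4) are routine given the convergence and pairing estimates already established for \Cref{thm:main2_intro}.
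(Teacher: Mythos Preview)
Your proposal is correct and follows essentially the same route as the paper: both invoke the coorbit identification $\TLi = \Co(\PTalti{-\alpha'})$ from \Cref{prop:TL_coorbit} (together with the groundwork in Step~1 of the proof of \Cref{thm:main2_intro}, which verifies that the abstract framework of \cite{velthoven2022quasi} applies and identifies the relevant reservoir and dual pairings), and then apply the Riesz-sequence theorem \cite[Theorem~6.15]{velthoven2022quasi} to obtain the neighborhood $U$, the molecular system $(\phi_\gamma)_{\gamma\in\Gamma}\subset\overline{\Span}\{\pi(\gamma)\psi:\gamma\in\Gamma\}$, and the solution of the moment problem. The obstacles you flag---applicability of the quasi-Banach coorbit machinery for all $q\in(0,\infty]$ and the identification of the abstract coefficient space with $\dot{\mathbf{p}}^{-\alpha',\beta}_{\infty,q}$---are precisely what the paper handles via \Cref{lem:TranslationNormBounds}, \Cref{lem:r-norm-property}, \Cref{lem:ControlWeights}, and \Cref{def:peetre_seq}; your step~(4) unpacks the moment-problem conclusion slightly more explicitly than the paper, which simply cites the abstract theorem, but this is a difference in exposition rather than strategy.
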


Riesz sequences in $\TLi$ seem not to have appeared in the literature before, which makes Theorem \ref{thm:main3_intro} new even for isotropic dilations.
Similarly to Theorem \ref{thm:main2_intro}, we obtain \Cref{thm:main3_intro} by applying results of \cite{velthoven2022quasi, romero2020dual}
to the coorbit realization of the Triebel-Lizorkin spaces $\TLi$.

\subsection*{Notation}
We denote by $\PosPart{s} := \max\{ 0, s \}$ and
$\NegPart{s} := - \min \{ 0, s \}$ the positive and negative part of
$s \in \mathbb{R}$.  If $f_1, f_2$ are positive functions on a common
base set $X$, the notation $f_1 \lesssim f_2$ is used to denote the
existence of a constant $C > 0$ such that $f_1 (x) \leq C f_2 (x)$ for
all $x \in X$. The notation $f_1 \asymp f_2$ is used whenever both
$f_1 \lesssim f_2$ and $f_2 \lesssim f_1$.  We will sometimes use
$\lesssim_\alpha$ to indicate that the implicit constant depends on
a quantity $\alpha$.

For a function $f : \mathbb{R}^d \to \mathbb{C}$ and a matrix $A \in \mathbb{R}^{d \times d}$, the dilation of  $f : \mathbb{R}^d \to \mathbb{C}$ is denoted by
  $f_j := |\det A|^j \, f(A^j \cdot)$, where $j \in \mathbb{Z}$.
 Similarly, we write $f_s := |\det A|^s \, f(A^s \cdot)$ for $s \in \mathbb{R}$, provided that $A^s$ is well-defined.

 The class of Schwartz functions on $\mathbb{R}^d$ will be denoted by
$\mathcal{S} (\mathbb{R}^d)$. Its dual space is simply denoted by $\mathcal{S}' (\mathbb{R}^d)$. Moreover, the notation
$\mathcal{P}(\mathbb{R}^d)$ will be used for the collection of polynomials on $\mathbb{R}^d$, and we write $\SP$ for the quotient space of tempered distributions modulo polynomials.  The Fourier transform
$\mathcal{F} : \mathcal{S}(\mathbb{R}^d) \to
\mathcal{S}(\mathbb{R}^d)$ is defined as
$\widehat{f} (\xi) = \int_{\mathbb{R}^d} f(x) e^{-2\pi ix \cdot \xi} \;
dx$ with inverse $\widecheck{f} := \mathcal{F}^{-1} f := \widehat{f}(- \, \cdot \,)$.
Similar notations will be used for the extension of the Fourier transforms to $L^2 (\mathbb{R}^d)$.

 The Lebesgue measure on $\mathbb{R}^d$ is denoted by $\Measure$. For a measurable set $Q \subset \mathbb{R}^d$ of finite, positive measure, it will be written $\dashint_{Q} f(x) \; dx := \Lebesgue{Q}^{-1} \int_{Q} f(x) \; dx$ for $f : \mathbb{R}^d \to \mathbb{C}$. The closure of a set $Q \subset \mathbb{R}^d$ is denoted by $\overline{Q}$.

 If $G$ is a group, then the left and right translation of a function $F : G \to \mathbb{C}$ by $h \in G$
will be denoted by
$
  L_h F = F(h^{-1} \cdot) $
  and $
  R_h F = F (\cdot \, h),
$ respectively. In addition, we write $F^{\vee} (x) = F(x^{-1})$.

\section{Anisotropic Triebel-Lizorkin spaces with $p = \infty$}
This section provides preliminaries on expansive matrices and Triebel-Lizorkin spaces.

\subsection{Expansive matrices} \label{sub:expansive}
A matrix $A \in \R^{d \times d}$ is said to be \emph{expansive}
if $| \lambda| > 1$ for all eigenvalues $\lambda \in \sigma(A)$,
where $\sigma(A)$ denotes the spectrum of $A$. Equivalently, a
matrix $A \in \R^{d \times d}$ is expansive if $\|A^{-j} \| \to 0$ as $j \to \infty$.

The following lemma collects several basic properties of expansive
matrices that will be used in the sequel, see, e.g.,
\cite[Definitions~2.3 and 2.5]{bownik2003anisotropic} and
\cite[Lemma~2.2]{bownik2003anisotropic}.  For a more general
background on spaces of homogeneous type on $\mathbb{R}^d$, we refer
to \cite{CoifmanSpacesOfHomogeneousType,coifman1977extensions}.

\begin{lemma}\label{lem:QuasiNorm}
Let $A \in \mathrm{GL}(d, \mathbb{R})$ be expansive.
\begin{enumerate}[(i)]
  \item There exist an ellipsoid $\Omega=\Omega_A$ (that is, $\Omega = P (B_1 (0))$ for some $P \in \mathrm{GL}(d, \mathbb{R})$) and $r > 1$ such that
        \[
          \Omega \subset r\Omega \subset A\Omega
        \]
        and $\Lebesgue{\Omega} = 1$.
        The function $\rho_A : \mathbb{R}^d \to [0,\infty)$ defined  by
        \begin{align}\label{eq:step_norm}
        \rho_A (x)
        = \begin{cases}
            |\det A|^j, \quad & \text{if} \;\; x \in A^{j+1} \Omega \setminus A^j \Omega, \\
            0, \quad          & \text{if} \;\; x = 0,
          \end{cases}
        \end{align}
        is Borel measurable and forms a quasi-norm, i.e., there exists $C \geq 1$ such that
        \begin{equation}
          \begin{alignedat}{3}
            \rho_A (-x)  & = \rho_A(x),
            && \quad x \in \mathbb{R}^d ,\\
            \rho_A (x)   &> 0,
            && \quad x \in \mathbb{R}^d \setminus \{0\} , \\
            \rho_A (Ax)  &= |\det A|  \rho_A (x),
            && \quad x \in \mathbb{R}^d , \\
            \rho_A (x+y) &\leq C \big( \rho_A (x) + \rho_A (y) \big),
            && \quad x,y \in \mathbb{R}^d.
          \end{alignedat}
          \label{eq:QuasiNormProperties}
        \end{equation}

  \item The function  $d_A : \R^d \times \R^d \to [0,\infty), (x,y) \mapsto \rho_A(x-y)$
        forms a quasi-metric.
        The triple $(\mathbb{R}^d, d_A, \Measure)$
        forms a space of homogeneous type.
\end{enumerate}
\end{lemma}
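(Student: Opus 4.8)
The plan is to follow the classical construction of an adapted ellipsoid, essentially as in Bownik \cite{bownik2003anisotropic}, and then read off all stated properties. The argument splits into four steps: (1) produce an inner product on $\R^d$ under which $A$ strictly expands, and extract $\Omega$ and $r$ from it; (2) record that the dilates $(A^j\Omega)_{j\in\Z}$ are nested, exhaust $\R^d$, and shrink to $\{0\}$, so that $\rho_A$ in \eqref{eq:step_norm} is well defined, Borel, even, and $A$-homogeneous; (3) prove the quasi-triangle inequality, which is the only non-formal point; (4) transfer the quasi-norm axioms to $d_A$ and verify the doubling property of Lebesgue measure.

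For step (1): since $A$ is expansive, every $\mu\in\sigma(A^{-1})$ satisfies $|\mu|<1$, so I would fix $\tau\in(0,1)$ strictly larger than all such $|\mu|$; by the spectral radius formula $\|A^{-j}\|$ decays geometrically at a rate below $\tau$, so the series
\[
  \langle x, y \rangle_{\ast} := \sum_{j=0}^{\infty} \tau^{-2j} \, \langle A^{-j} x, A^{-j} y \rangle
\]
converges and defines an inner product (its $j=0$ term is $\langle x,x\rangle$). Shifting the summation index gives $\|A^{-1}x\|_{\ast}^2 = \tau^2(\|x\|_{\ast}^2 - \|x\|^2) \le \tau^2\|x\|_{\ast}^2$, i.e.\ $\|A^{-1}x\|_{\ast}\le\tau\|x\|_{\ast}$. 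Writing $\|x\|_{\ast}=\|P_0^{-1}x\|$ for a suitable $P_0\in\mathrm{GL}(d,\R)$, I would set $r:=\tau^{-1}>1$ and $\Omega:=c\,P_0(B_1(0))$ with $c:=\Lebesgue{P_0(B_1(0))}^{-1/d}$, so that $\Lebesgue{\Omega}=1$, $\Omega\subset r\Omega$ (since $r>1$), and $r\Omega\subset A\Omega$ (which is exactly the contraction estimate $\|A^{-1}x\|_{\ast}\le r^{-1}\|x\|_{\ast}$). Applying powers of $A$ to $\Omega\subset A\Omega$ shows the $A^j\Omega$ are nested, and the bounds $\|A^{-j}x\|_{\ast}\le\tau^j\|x\|_{\ast}\to0$ and $\|A^j x\|_{\ast}\ge\tau^{-j}\|x\|_{\ast}\to\infty$ (for $x\neq0$) yield $\bigcup_j A^j\Omega=\R^d$ and $\bigcap_j A^j\Omega=\{0\}$; hence every $x\neq0$ lies in exactly one annulus $A^{j+1}\Omega\setminus A^j\Omega$ and $\rho_A$ is well defined.

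Now $\rho_A=\sum_{j\in\Z}|\det A|^j\,\Indicator_{A^{j+1}\Omega\setminus A^j\Omega}$ is Borel, it is even because $-\Omega=\Omega$, and $\rho_A(Ax)=|\det A|\,\rho_A(x)$ because multiplication by $A$ carries the $j$-th annulus onto the $(j+1)$-st. The crux is the quasi-triangle inequality, where I would use convexity: $\Omega=P(B_1(0))$ gives $\Omega+\Omega=2\Omega$, while iterating $r\Omega\subset A\Omega$ gives $r^m\Omega\subset A^m\Omega$, so with $m:=\lceil\log_r 2\rceil$ one has $2\Omega\subset A^m\Omega$. If $\rho_A(x)\ge\rho_A(y)$ with $\rho_A(x)=|\det A|^j$, then $x,y\in A^{j+1}\Omega$, hence $x+y\in 2A^{j+1}\Omega = A^{j+1}(2\Omega)\subset A^{j+m+1}\Omega$, so $\rho_A(x+y)\le|\det A|^{j+m}=|\det A|^m\,\rho_A(x)$. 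This gives $\rho_A(x+y)\le C\max\{\rho_A(x),\rho_A(y)\}\le C(\rho_A(x)+\rho_A(y))$ with $C:=|\det A|^m$, completing (i).

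For (ii), the quasi-metric axioms for $d_A(x,y)=\rho_A(x-y)$ are immediate from those of $\rho_A$: evenness gives $d_A(x,y)=d_A(y,x)$, positivity gives $d_A(x,y)=0\iff x=y$, and the quasi-triangle inequality transfers verbatim with the same constant $C$. Finally a $d_A$-ball $B(x,s)=x+\{z:\rho_A(z)<s\}$ is a translate of a set trapped between $A^j\Omega$ and $A^{j+1}\Omega$, where $j$ is the integer with $|\det A|^j<s\le|\det A|^{j+1}$, so $|\det A|^{-1}s\le\Measure(B(x,s))\le|\det A|\,s$; hence balls are measurable with positive finite measure and Lebesgue measure is doubling, and $(\R^d,d_A,\Measure)$ is a space of homogeneous type. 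I expect the only genuinely substantive points to be the construction of the averaged inner product and the convexity step $\Omega+\Omega=2\Omega\subset A^m\Omega$ behind the quasi-triangle inequality; the remainder is bookkeeping.
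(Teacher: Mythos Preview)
Your proposal is correct and essentially reproduces the standard construction from Bownik~\cite{bownik2003anisotropic}, which is precisely what the paper cites in lieu of a proof; the paper itself gives no argument for this lemma beyond the reference. Your averaged-inner-product construction of $\Omega$, the nesting and exhaustion of the dilates $A^j\Omega$, and the convexity step $\Omega+\Omega=2\Omega\subset A^m\Omega$ for the quasi-triangle inequality are exactly the ingredients one finds there, so there is nothing to add.
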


For an expansive matrix $A \in \mathrm{GL}(d, \mathbb{R})$, a function $\rho_A : \mathbb{R}^d \to [0,\infty)$ defined by Equation \eqref{eq:step_norm} will be called a \emph{step homogeneous quasi-norm} associated to $A$.
Given $y \in \mathbb{R}^d$ and $r>0$, its associated metric ball will be denoted by
\[ B_{\rho_A}(y, r) := \{ x \in \R^d \colon \rho_A (x-y) < r \}. \]
It is readily verified that $ B_{\rho_A}(0, 1) = \Omega$.  Hence, its metric balls are
of the form
\[ B_{\rho_A}(y, r) = A^{\ell} \Omega +y,
\]
where $\ell \in \Z$
is such that $|\det A|^{\ell-1} < r \leq |\det A|^{\ell} $.

Throughout this paper, given an expansive $A \in \mathrm{GL}(d, \mathbb{R})$, we will fix an ellipsoid $\Omega = \Omega_A$ as appearing in \Cref{lem:QuasiNorm} (i). This choice is not unique. Any other choice of ellipsoid will yield an equivalent quasi-norm, see, e.g., \cite[Lemma 2.4]{bownik2003anisotropic}.

\subsection{Schwartz seminorms}
 Let $A \in \mathrm{GL}(d, \mathbb{R})$ be expansive with associated step homogeneous quasi-norm $\rho_A$.
 For $\beta > 0$, the quasi-norm properties  \eqref{eq:QuasiNormProperties}
 imply that
  \begin{align} \label{eq:nusubmultiplicative}
  (1 + \rho_A (x + y))^{\beta} \lesssim_{A, \beta} (1 + \rho_A (x))^{\beta} (1 + \rho_A (y))^{\beta}
  \end{align}
  for $x,y \in \R^d$. For ease of notation, we will often write $\nu_{\beta} (x) = (1+\rho_A(x))^{\beta}$ for $x \in \R^d$.

 Let $\lambda_{-}$ and $\lambda_{+}$ be such that $1 < \lambda_{-} < \min_{\lambda \in \sigma(A)} |\lambda| \leq \max_{\lambda \in \sigma(A)} |\lambda| < \lambda_{+}$. Put
 \begin{align*} 
 \zeta_- := \frac{\ln \lambda_-}{\ln |\det A|} \in (0, d^{-1}) \quad \text{and} \quad \zeta_+ := \frac{\ln \lambda_+}{\ln |\det A|} \in (d^{-1}, \infty).
 \end{align*}
 Then \cite[Lemma 3.2]{bownik2003anisotropic} (see also \cite[Lemma~2.2]{KvVV2021anisotropic}) implies that
  \begin{equation*}
    \begin{alignedat}{3}
      \rho_A (x) &\lesssim \| x \|^{1/\zeta_-} + \| x \|^{1/\zeta_+},
      && \qquad x \in \R^d, \\
      \| x \| &\lesssim \rho_A(x)^{\zeta_-} + \rho_A(x)^{\zeta+},
      && \qquad x \in \R^d .
    \end{alignedat}
  \end{equation*}
  Therefore, the collection
  \begin{equation}
    \label{eq:semi-norms}
    p_{M,N}(\varphi):= \max_{ \substack{|\alpha| \leq M, \\ 0 < \beta \leq N}  } \sup_{y \in \R^d} \;
     (1 + \rho_A (y))^{\beta} \cdot |\partial^\alpha \varphi (y)|,
    \quad M,N \in \N,
  \end{equation}
  defines an equivalent family of seminorms for the Schwartz space $\mathcal{S} (\mathbb{R}^d)$.

\subsection{Analyzing vectors}
\label{sec:analyzing}
Let $A \in \mathrm{GL}(d, \mathbb{R})$ be expansive. Choose a function $\varphi \in \mathcal{S} (\mathbb{R}^d)$ with compact Fourier support
\begin{align}\label{eq:analyzing_support}
  \supp \widehat{\varphi}
  := \overline{\{ \xi \in \mathbb{R}^d : \widehat{\varphi}(\xi) \neq 0 \}}
  \subset \mathbb{R}^d \setminus \{0\}
\end{align}
satisfying, in addition,
\begin{align}\label{eq:analyzing_positive}
  \sup_{j \in \mathbb{Z}}
    \big| \widehat{\varphi} ((A^*)^j \xi) \big| > 0,
  \quad \xi \in \mathbb{R}^d \setminus \{0\}.
\end{align}
Then the function $\psi \in \SC(\R^d)$
defined by
\begin{equation*}
  \widehat\psi(\xi)
  = \begin{cases}
      \overline{\widehat\varphi(\xi)} / \sum_{k \in \Z}
                                          |\widehat{\varphi} ((A^*)^k \xi)|^2,
      \quad
      & \text{if} \;\; \xi \in \R^d \setminus \{0\}, \\
      0,
      \quad
      & \text{if} \;\; \xi = 0,
    \end{cases}
\end{equation*}
is well-defined and satisfies
\begin{align}\label{eq:analyzing_calderon}
  \sum_{j \in \mathbb{Z}}
    \widehat{\varphi} ((A^*)^j \xi) \, \widehat{\psi} ((A^*)^j \xi)
  = 1,
  \quad \xi \in \mathbb{R}^d \setminus \{0\}.
\end{align}
For more details and further properties, see, e.g., \cite[Lemma~3.6]{bownik2006atomic}

\subsection{Anisotropic Triebel-Lizorkin spaces}
\label{sub:TLSpaces}

Let $A \in \mathrm{GL}(d, \mathbb{R})$ be expansive and
fix an analyzing vector $\varphi \in \Schwartz(\R^d)$
with compact Fourier support satisfying
\eqref{eq:analyzing_support} and \eqref{eq:analyzing_positive}.

For $\alpha \in \mathbb{R}$ and $0 < q < \infty$,
the associated (homogeneous) \emph{anisotropic Triebel-Lizorkin space} $\TLi = \TLi(A,\varphi)$
is defined as the collection of all $f \in \SP$ for which
\[
  \| f \|_{\TLi}
  := \sup_{\ell \in \Z, k \in \Z^d} \bigg(
  \dashint_{A^\ell ([0,1]^d + k)} \sum_{j=-\ell}^\infty \big( |\det A|^{\alpha j}  |(f \ast \varphi_j)(x)| \big)^q
   \, dx
     \bigg)^{1/q}
  < \infty.
 \]
The definition of $\TLi = \TLi(A, \varphi)$ is independent of the choice of analyzing vector $\varphi$, with equivalent quasi-norms for different choices, cf. \cite[Corollary 3.13]{bownik2007anisotropic}. In addition,
the space $\TLi$ is continuously embedded into $\mathcal{S}' (\mathbb{R}^d) / \mathcal{P}(\mathbb{R}^d)$, and is complete with respect to $\| \cdot \|_{\TLi}$. See \cite[Corollary 3.14]{bownik2007anisotropic} for both claims.

For the case $q = \infty$, the space $\TLii = \TLii(A, \varphi)$ will be defined as the collection of all $f \in \mathcal{S}' (\mathbb{R}^d) / \mathcal{P} (\mathbb{R}^d)$ satisfying
\[
  \| f \|_{\TLii} := \sup_{\ell \in \Z, k \in \Z^d} \sup_{j \in \Z, j
    \geq -\ell} \bigg(
  \dashint_{A^\ell ([0,1]^d + k)} |\det A|^{\alpha j}  |(f \ast \varphi_j)(x)|
  \, dx \bigg ) < \infty.
\]

For our purposes, it will be convenient to use the metric ball $\Omega = B_{\rho_A} (0,1)$ instead of the cube $[0,1]^d$ in defining $\TLi$. The independence of this choice is guaranteed by the following lemma, whose simple proof follows from a standard covering argument and is hence omitted.

\begin{lemma} \label{lem:independence_balls}
Let $F : \mathbb{R}^d \to [0, \infty)$ and $F_j :  \mathbb{R}^d \to [0, \infty)$, $j \in \mathbb{Z}$, be measurable functions. Then
\[
\sup_{\ell \in \mathbb{Z}, k \in \Z^d} \dashint_{A^\ell ([0,1]^d + k)} F(x) \; dx \asymp \sup_{\ell \in \mathbb{Z}, w \in \R^d} \dashint_{A^\ell \Omega + w} F(x)  \; dx
\]
and
\[
\sup_{\ell \in \mathbb{Z}, k \in \Z^d} \sup_{j \in \Z, j \geq - \ell} \; \dashint_{A^\ell ([0,1]^d + k)} F_j(x) \; dx \asymp \sup_{\ell \in \mathbb{Z}, w \in \R^d} \sup_{j \in \Z, j \geq -\ell} \; \dashint_{A^\ell \Omega + w} F_j(x) \; dx
\]
with implicit constants only depending on $d, A$.
\end{lemma}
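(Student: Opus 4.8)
The plan is to prove both equivalences by elementary two‑sided finite covering arguments, exploiting that $[0,1]^d$ and the ellipsoid $\Omega = B_{\rho_A}(0,1)$ are bounded sets with $\Lebesgue{[0,1]^d} = \Lebesgue{\Omega} = 1$, that $0 \in \mathrm{int}\,\Omega$, and that the translates $\{[0,1]^d + k : k \in \Z^d\}$ cover $\R^d$. Note that at any scale $\ell$ the two types of averaging set $A^\ell([0,1]^d+k)$ and $A^\ell\Omega + w$ carry the \emph{same} Lebesgue measure $|\det A|^\ell$, which keeps the bookkeeping transparent. The only point requiring a little care is the constraint $k \in \Z^d$ on the cube side, which dictates which of the two covering properties is used in which direction; I do not expect a genuine obstacle.

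First I would bound the cube side by the ellipsoid side. Since $0 \in \mathrm{int}\,\Omega$, fix $\delta > 0$ with $B_\delta(0) \subseteq \Omega$; finitely many translates of $B_\delta(0)$, hence of $\Omega$, cover the compact set $[0,1]^d$, say $[0,1]^d \subseteq \bigcup_{i=1}^M (\Omega + v_i)$ with $M = M(d,A)$, and by translation invariance the same $M$ works for every $[0,1]^d + k$. Applying $A^\ell$ and using $\Lebesgue{A^\ell\Omega} = |\det A|^\ell = \Lebesgue{A^\ell([0,1]^d+k)}$ gives
\[
  \dashint_{A^\ell([0,1]^d+k)} F \, dx
  \;\le\; \sum_{i=1}^M \dashint_{A^\ell\Omega + A^\ell(v_i+k)} F\, dx
  \;\le\; M \sup_{\ell'\in\Z,\, w\in\R^d} \dashint_{A^{\ell'}\Omega + w} F\, dx ,
\]
and taking the supremum over $\ell, k$ yields the $\lesssim$-direction.

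For the reverse inequality I would use the tiling property of the unit cubes. Writing $A^\ell\Omega + w = A^\ell(\Omega + A^{-\ell}w)$ and using that $\Omega$ is bounded, the translate $\Omega + A^{-\ell}w$ meets at most $N = N(d,A)$ of the unit cubes $[0,1]^d + k$, so $\Omega + A^{-\ell}w \subseteq \bigcup_{i=1}^N ([0,1]^d + k_i)$ for suitable $k_i = k_i(\ell, w) \in \Z^d$, and hence $A^\ell\Omega + w \subseteq \bigcup_{i=1}^N A^\ell([0,1]^d + k_i)$ at the \emph{same} scale $\ell$. The volume bookkeeping of the previous step then gives
\[
  \dashint_{A^\ell\Omega + w} F\, dx
  \;\le\; N \sup_{\ell'\in\Z,\, k\in\Z^d} \dashint_{A^{\ell'}([0,1]^d+k)} F\, dx ,
\]
so that the first equivalence follows, with constants depending only on $d$ and $A$. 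Finally, for the scale‑indexed statement the very same two coverings apply: for each fixed $\ell, j \in \Z$ the inclusions above relate dilated cubes and dilated ellipsoids at the common scale $\ell$, so the side condition $j \ge -\ell$ is preserved, and one pulls $\sup_{j \ge -\ell}$ through the finite sums before taking the remaining suprema, obtaining the equivalence with the same constants.
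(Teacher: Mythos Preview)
Your proof is correct and is precisely the ``standard covering argument'' that the paper alludes to (the paper omits the proof entirely). The two key observations---that $[0,1]^d$ is covered by finitely many translates of $\Omega$, and that any translate of $\Omega$ meets only boundedly many integer unit cubes---together with the equality $\Lebesgue{A^\ell([0,1]^d)} = \Lebesgue{A^\ell\Omega} = |\det A|^\ell$ and the preservation of the scale $\ell$ in both coverings, are exactly what is needed, and you have handled the constraint $j \ge -\ell$ correctly.
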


\section{Maximal function characterizations} \label{sec:maximal}
Throughout this section, $A \in \mathrm{GL}(d, \mathbb{R})$ will denote an expansive matrix.

\subsection{Peetre-type maximal function}
Let $\varphi \in \mathcal{S}(\R^d)$. For $j \in \mathbb{Z}$ and $\beta > 0$,
the associated \emph{Peetre-type maximal function} of $f \in \mathcal{S}' (\mathbb{R}^d)$ is defined as
\begin{align}\label{eq:peetre_maximal}
  \DoubleStar{j} f (x)
  = \sup_{z \in \mathbb{R}^d}
       \frac{|(f \ast \varphi_{j}) (x+z)|}
            {(1 + \rho_{A}(A^j z))^{\beta}},
  \quad x \in \mathbb{R}^d,
\end{align}
where $\rho_A : \R^d \to [0,\infty)$ denotes the step homogeneous quasi-norm.

The Peetre-type maximal function has the following basic properties.

\begin{lemma}
  \label{lem:peetre-max-finitness}
  Let $\varphi \in \Schwartz(\R^d)$. Let $j \in \Z$ and $\beta > 0$.  For any $f \in \Schwartz ' (\R^d)$, the following holds:
  \begin{enumerate}[(i)]
  \item If $\DoubleStar{j} f (x)$ is finite for some $x \in \R^d$,
    then it is finite for all $x \in \R^d$.
  \item There
    exists an $N = N (f , A) \in \N$ such that for all $\beta \geq N$
    \begin{equation*}
    \varphi_{j,\beta}^{\ast\ast}f(x) < \infty, \quad x \in \R^d .
  \end{equation*}
  \end{enumerate}

\end{lemma}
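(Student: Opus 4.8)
The plan is to prove the two parts separately, both resting on the submultiplicativity estimate \eqref{eq:nusubmultiplicative} for $\nu_\beta = (1+\rho_A(\cdot))^\beta$ together with standard facts on convolutions of tempered distributions with Schwartz functions (recall in particular that $f \ast \varphi_j$ is a well-defined $C^\infty$ function since $\varphi_j \in \Schwartz(\R^d)$ and $f \in \Schwartz'(\R^d)$).

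For part (i), I would fix $x, x' \in \R^d$ and substitute $z = z' - (x'-x)$ in the supremum defining $\varphi_{j,\beta}^{\ast\ast} f(x')$. Since $x' + z = x + z'$ and $A^j z = A^j z' - A^j(x'-x)$, this rewrites $\varphi_{j,\beta}^{\ast\ast} f(x')$ as a supremum of $|(f \ast \varphi_j)(x + z')|$ against the weight $\big(1 + \rho_A(A^j z' - A^j(x'-x))\big)^\beta$. Writing $A^j z' = \big(A^j z' - A^j(x'-x)\big) + A^j(x'-x)$ and applying \eqref{eq:nusubmultiplicative} to this sum yields the pointwise lower bound
\[
  \big( 1 + \rho_A(A^j z' - A^j(x'-x)) \big)^\beta
  \;\gtrsim_{A,\beta}\;
  \frac{(1 + \rho_A(A^j z'))^\beta}{(1 + \rho_A(A^j(x'-x)))^\beta} ,
\]
and hence $\varphi_{j,\beta}^{\ast\ast} f(x') \lesssim_{A,\beta} \big(1 + \rho_A(A^j(x'-x))\big)^\beta \, \varphi_{j,\beta}^{\ast\ast} f(x)$. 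As the prefactor is a finite constant once $x, x', j$ are fixed, finiteness of $\varphi_{j,\beta}^{\ast\ast} f$ at $x$ forces finiteness at $x'$; exchanging the roles of $x$ and $x'$ gives the equivalence in (i).

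For part (ii), I would use that $f \ast \varphi_j$ is of at most polynomial growth: writing $(f \ast \varphi_j)(y) = \langle f, \varphi_j(y - \cdot) \rangle$ and combining the continuity of $f$ with the translation bound $p_{M,N}(\varphi_j(y - \cdot)) \lesssim_{A} (1 + \rho_A(y))^N \, p_{M,N}(\varphi_j)$ — itself a consequence of \eqref{eq:nusubmultiplicative} — one obtains a constant $C_j > 0$ and an integer $N_0 = N_0(f,A) \in \N$, namely the order of $f$ relative to the equivalent seminorm family \eqref{eq:semi-norms}, such that $|(f \ast \varphi_j)(y)| \le C_j (1 + \rho_A(y))^{N_0}$ for all $y$. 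Applying \eqref{eq:nusubmultiplicative} once more and using the homogeneity $\rho_A(A^j z) = |\det A|^j \rho_A(z)$ (so that $1 + \rho_A(z) \le (1 + |\det A|^{-j})(1 + \rho_A(A^j z))$), it follows for each fixed $x$ that $|(f \ast \varphi_j)(x + z)| \lesssim_{x,j,A} (1 + \rho_A(A^j z))^{N_0}$. Hence, with $N := N_0$, for every $\beta \ge N$,
\[
  \varphi_{j,\beta}^{\ast\ast} f(x)
  \;\lesssim_{x,j,A}\;
  \sup_{z \in \R^d} \big( 1 + \rho_A(A^j z) \big)^{N_0 - \beta}
  \;\le\; 1 < \infty ,
\]
so $\varphi_{j,\beta}^{\ast\ast} f$ is finite everywhere (alternatively, one checks this only at $x = 0$ and then invokes part (i)). Since $N = N_0$ depends only on $f$ and—through the choice of seminorm family—on $A$, this is the asserted threshold $N = N(f,A)$.

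All the estimates involved are routine, and the single point that needs care is the dependency bookkeeping in part (ii): one must confirm that the polynomial growth exponent produced by $f \ast \varphi_j$ is the order of the distribution $f$, so that it does \emph{not} grow with $j$ (only the multiplicative constant $C_j$ does), and that the auxiliary losses introduced by passing from $\rho_A(\cdot)$ to $\rho_A(A^j\cdot)$ stay in the constants. With this in place, the resulting $N$ is genuinely a function of $f$ and $A$, as claimed.
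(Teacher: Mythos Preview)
Your proposal is correct and follows essentially the same approach as the paper: both parts rest on the submultiplicativity estimate \eqref{eq:nusubmultiplicative}, with (i) reducing to a change of variable in the supremum and (ii) invoking the continuity of $f$ with respect to the $\rho_A$-based seminorms \eqref{eq:semi-norms} to extract a polynomial growth bound on $f \ast \varphi_j$ whose exponent is the order of $f$ and hence independent of $j$. Your presentation of (ii) is slightly more streamlined---you isolate the polynomial growth estimate $|(f\ast\varphi_j)(y)| \le C_j (1+\rho_A(y))^{N_0}$ first and then pass from $\rho_A(z)$ to $\rho_A(A^j z)$ via the homogeneity identity---whereas the paper carries the seminorm estimate through in one chain, but the substance is identical.
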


\begin{proof}
  (i) Let $x_0 \in \mathbb{R}^d$ be such that $\DoubleStar{j} f (x_0) < \infty$. The symmetry of $\rho_A$ and
  the inequality \eqref{eq:nusubmultiplicative} for $\nu_\beta (y) = (1 + \rho_{A}(y))^{\beta}$ yield that
  \begin{align*}
    \DoubleStar{j} f (x) &= \sup_{z \in \R^d} \frac{|(f \ast \varphi_j) (z)|}
    {\nu_\beta\big(A^j(x-z)\big)}
    \lesssim \nu_\beta\big(A^j(x-x_0)\big) \sup_{z \in \R^d} \frac{|(f \ast \varphi_j) (z)|}
    {\nu_\beta\big(A^j(x_0-z)\big)} \\
    &= \nu_\beta\big(A^j(x-x_0)\big)
    \, \DoubleStar{j} f (x_0) < \infty.
  \end{align*}
 for arbitrary $x \in \mathbb{R}^d$.
\\~\\ (ii) Fix $x, z \in \R^d$. Let $\langle \cdot, \cdot \rangle_{\Schwartz', \Schwartz}$ denote the bilinear dual pairing. Since $f \in \Schwartz ' (\R^d)$ and the seminorms \eqref{eq:semi-norms} are
  ordered, there exist $M=M(f), N=(f,A) \in \N$ and a constant $C=C(f) > 0$ such
  that
  \begin{align*}
    |(f \ast \varphi_j) (z)| & = | \langle f, T_z  \varphi_j ^{\vee} \rangle_{\Schwartz', \Schwartz}| \\
    &\leq C \max_{ \substack{|\alpha| \leq M, \\ 0 < \beta \leq N}  }  \sup_{y \in \R^d}
     \nu_\beta (y) \, |\partial^\alpha \big(|\varphi_j^{\vee}|\big)(y-z)| \\
   & \lesssim C \max_{ \substack{|\alpha| \leq M, \\ 0 < \beta \leq N}  }  \sup_{\tilde{y} \in \R^d}
      \nu_\beta (\tilde{y} + x)  \nu_\beta (z - x) \,
     |\partial^\alpha \big(|\varphi_j^{\vee}|\big)(\tilde{y})|  \\
   & \lesssim C \max_{ \substack{|\alpha| \leq M, \\ 0 < \beta \leq N}  }  \sup_{\tilde{y} \in \R^d}
      \nu_\beta (\tilde{y} + x)  \max\{1, |\det A|^{-j \beta}\} \,
     \nu_\beta\big( A^j (z - x) \big) \,
     |\partial^\alpha \big(|\varphi_j^{\vee}|\big)(\tilde{y})| .
  \end{align*}
 Consequently,
  \begin{align*}
    \frac{|(f \ast \varphi_j) (z)|}{\nu_{N} \big( A^j (z - x) \big)}
    &\lesssim  \max_{ \substack{|\alpha| \leq M, \\ 0 < \beta \leq N}  }  \sup_{\tilde{y} \in \R^d}
    \nu_\beta (\tilde{y} + x)  \,
    |\partial^\alpha \big(|\varphi_j^{\vee}|\big)(\tilde{y})|
    \lesssim p_{M,N} (\varphi_j^{\vee}) .
  \end{align*}
 Since the constants are
  independent of $z \in \R^d$, the claim follows easily.
\end{proof}

\subsection{Sub-mean-value property} The following type of result is often referred to as a \textquotedblleft sub-mean-value property" and will play an essential role in deriving the main results. It forms an anisotropic analogue of the isotropic result \cite[Theorem~5]{stroemberg1989weighted}.

\begin{proposition}
  \label{prop:peetre-estimate}
  Let $A \in \mathrm{GL}(d, \mathbb{R})$ be expansive and let
  $\varphi \in \Schwartz(\R^d)$ have compact Fourier support satisfying \eqref{eq:analyzing_support}
  and \eqref{eq:analyzing_positive}.  Then, for all
  $q \in (0,\infty)$ and $\beta > 0$, there exists a constant
  $C= C(A,\varphi, q, \beta) > 0$ such that
  \begin{equation}
    \label{eq:peetre-estimate}
    (\DoubleStar{j} f (x))^q
    \leq C |\det A|^ j \int_{\R^d}  \frac{|(f \ast \varphi_j)(y)|^q}{\big(1+\rho_A(A^j(x-y))\big)^{\beta q}}
     \, dy, \quad x \in \R^d,
  \end{equation}
  for all $f \in \Schwartz'(\R^d)$ and all $j \in \Z$.
\end{proposition}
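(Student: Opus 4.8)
The plan is to follow the classical Peetre argument adapted to the anisotropic setting. Fix $j \in \Z$ and $f \in \Schwartz'(\R^d)$; by \Cref{lem:peetre-max-finitness}(i) we may assume $\DoubleStar{j} f$ is finite everywhere, since otherwise the estimate is vacuous, and by a standard dilation/rescaling it suffices to treat $j = 0$, the general case following by substituting $f \mapsto f(A^{-j}\cdot)$ and tracking the Jacobian $|\det A|^j$. The key analytic input is a reproducing identity: using the analyzing vector $\varphi$ and the companion $\psi \in \Schwartz(\R^d)$ constructed in \Cref{sec:analyzing} satisfying the Calder\'on condition \eqref{eq:analyzing_calderon}, one writes $f \ast \varphi_0 = \sum_{k \in \Z} f \ast \varphi_0 \ast \psi_k \ast \varphi_k$ (a convolution identity valid on $\SP$, with the usual care about convergence), or more precisely a one-sided version $f \ast \varphi_0 = \sum_{k \geq 0} (f \ast \varphi_k) \ast \theta_{k}$ for suitable Schwartz functions $\theta_k$ with the right frequency localization. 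The decay of $\widehat\theta_k$ (which comes from the frequency separation between the supports of $\widehat\varphi$ and its dilates) produces a geometric gain in $k$.

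First I would record the pointwise kernel estimate: writing $f\ast\varphi_0 = \sum_{k\geq 0} (f\ast\varphi_k)\ast\Psi_k$ where $\Psi_k := |\det A|^k \Psi(A^k\cdot)$ for a fixed Schwartz function $\Psi$ (independent of $k$ after the dilation is stripped off), one has for any $N$ the bound $|\Psi_k(u)| \lesssim_N |\det A|^k (1+\rho_A(A^k u))^{-N}$ from \eqref{eq:semi-norms} and the comparability of $\rho_A$ with powers of the Euclidean norm. Then, for $x,z \in \R^d$,
\begin{align*}
  |(f\ast\varphi_0)(x+z)|
  \leq \sum_{k\geq 0} \int_{\R^d} |(f\ast\varphi_k)(y)|\, |\Psi_k(x+z-y)|\, dy
  \lesssim_N \sum_{k\geq 0} |\det A|^k \int_{\R^d} \frac{|(f\ast\varphi_k)(y)|}{(1+\rho_A(A^k(x+z-y)))^{N}}\, dy.
\end{align*}
Next I would peel off the Peetre maximal function on the right: bounding $|(f\ast\varphi_k)(y)| \leq \DoubleStar{k} f(x) \cdot (1+\rho_A(A^k(y-x)))^\beta$ and using the quasi-triangle inequality \eqref{eq:nusubmultiplicative} together with the dilation relation $\rho_A(A^k v) = |\det A|^k \rho_A(v)$, one extracts a factor $(1+\rho_A(A^0 z))^\beta$ times a controllable integral, at the cost of requiring $N$ large relative to $\beta$. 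This yields the sub-mean-value inequality with $\DoubleStar{j}f(x)$ replaced by $\DoubleStar{0}f(x)$ on the right, but at the weaker exponent $1$ rather than $q$ and summed over $k$; the standard trick is to interpolate between the $k=0$ term and the tail, i.e. to use $\DoubleStar{0}f(x) \lesssim (\DoubleStar{0}f(x))^{1-q} \cdot \big(\text{integral}\big)^{q}$-type self-improvement, or rather to first prove the estimate with the $q$-th powers by applying the elementary inequality $(\sum a_k)^q \lesssim \sum a_k^{q}$ (for $q \leq 1$) resp. H\"older (for $q>1$) and absorbing a small power of $\DoubleStar{0}f(x)$ into the left side using its finiteness — this absorption step is exactly where the argument in \cite{rychkov1991on, ullrich2012continuous} is delicate.

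The main obstacle I anticipate is precisely this absorption: turning the linear-in-$\DoubleStar{}f$ bound into the claimed $q$-th power bound \eqref{eq:peetre-estimate} requires knowing a priori that $\DoubleStar{j}f(x)$ is finite (supplied by \Cref{lem:peetre-max-finitness}(ii) after possibly enlarging $\beta$, then reducing back to arbitrary $\beta > 0$ by a separate monotonicity/duality argument) and then showing the ``bad'' term can genuinely be moved to the other side with a constant $<1$, which forces the geometric decay in $k$ from $\widehat\theta_k$ to beat the growth $|\det A|^{k}$ and the $(1+\rho_A(\cdot))^{\beta q}$ weight — so one needs the frequency-support separation quantitatively, giving decay $|\det A|^{-kL}$ for every $L$, which is where conditions \eqref{eq:analyzing_support} and \eqref{eq:analyzing_positive} enter essentially. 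A secondary technical point is justifying the convolution identity and its convergence for $f \in \Schwartz'(\R^d)$ (not merely modulo polynomials), which is handled as in \cite[Lemma 3.6]{bownik2006atomic} and the corresponding step in \cite{KvVV2021anisotropic}; I would cite those rather than redo it. Once these are in place, the change of variables back to general $j$ and the verification that the constant depends only on $A,\varphi,q,\beta$ are routine.
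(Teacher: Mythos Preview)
Your proposal follows the Rychkov/Ullrich route, which is workable in spirit but is both more complicated than necessary and, as written, does not actually land on the single-scale estimate \eqref{eq:peetre-estimate}. Two concrete issues:

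\textbf{(1) The ``more precise'' reproducing identity introduces a scale mismatch.} You rewrite $f\ast\varphi_0 = \sum_{k\geq 0}(f\ast\varphi_k)\ast\theta_k$ and then bound $|(f\ast\varphi_k)(y)|$ by $\DoubleStar{k}f(x)\,(1+\rho_A(A^k(y-x)))^\beta$. The resulting inequality has $\DoubleStar{k}f(x)$ (or integrals of $|f\ast\varphi_k|$) on the right for varying $k$, whereas the proposition asks for the single scale $j$ on both sides. The Rychkov-style absorption you cite produces a \emph{multi-scale} sub-mean-value inequality (a sum over $k$ of integrals of $|f\ast\varphi_{j+k}|^q$), which is what is actually proved in \cite{rychkov1991on, ullrich2012continuous}; reducing that to the single-scale statement requires a further argument you do not describe. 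Your first-mentioned identity $f\ast\varphi_0 = \sum_{k} f\ast\varphi_0\ast\psi_k\ast\varphi_k$ (keeping $\varphi_0$ on the right throughout) is in fact the correct starting point---you should not have abandoned it for the ``more precise'' version.

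\textbf{(2) The anticipated difficulties do not arise in the paper's argument.} The paper exploits the compact Fourier support \eqref{eq:analyzing_support} directly: because only finitely many dilates of $\widehat\varphi$ overlap with $\widehat\varphi$, the reproducing kernel $\Phi := \sum_{k=-N}^N \varphi_k\ast\psi_k$ satisfies $\varphi_j\ast\Phi_j = \varphi_j$ as a \emph{finite} sum. Writing out the convolution and keeping $f\ast\varphi_j$ (same $j$) inside the integral, one applies H\"older's inequality for $q\geq 1$ to split off $\|(f\ast\varphi_j)/\nu_\beta(A^j(x-\cdot))\|_{L^q}$ from the $L^{q'}$-norm of the kernel, which is computed explicitly. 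This already gives \eqref{eq:peetre-estimate} for $q\geq 1$. For $q\in(0,1)$, the paper bootstraps from the $q=1$ case: write the integrand as $(\cdot)^{1-q}(\cdot)^q$, dominate the first factor by $(\DoubleStar{j}f(x))^{1-q}$, and divide through using the finiteness from \Cref{lem:peetre-max-finitness}(ii) (first for large $\beta$, then for arbitrary $\beta>0$ by monotonicity of $\nu_\beta$). There is no infinite sum, no need for geometric decay in $k$, and no absorption with a constant $<1$---the concerns you list in your last paragraph simply do not appear.

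In short: keep $f\ast\varphi_j$ on the right, use that the reproducing sum is finite, and the argument becomes a direct H\"older estimate plus a one-line bootstrap.
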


\begin{proof}
The claim is trivial whenever $\DoubleStar{j} f(x) = 0$, so we assume throughout that $\DoubleStar{j} f(x) > 0$.
The proof will
  be split into two steps dealing with $q \in [1,\infty)$ and
  $q \in (0, 1)$ separately.
  \\~\\
  \textbf{Step 1.} \emph{(The case $q \in [1,\infty)$).}
  By the compact Fourier support condition
  \eqref{eq:analyzing_support} and the identity
  \eqref{eq:analyzing_calderon}, it follows that there exists
  $N \in \N$ (depending on $\varphi$ and $A$) such that the function
  $\Phi := \sum_{k = -N}^N \varphi_{k} \ast \psi_{k}$
  satisfies
  \begin{equation}
    \label{eq:conv-identity}
    \varphi_j \ast \Phi_j = \varphi_j , \quad j \in \Z,
  \end{equation}
  see, e.g., the proof of \cite[Theorem~3.5]{KvVV2021anisotropic} for a detailed verification.

  Using that $\Phi_j = \sum_{k = -N}^N \varphi_{j + k} \ast \psi_{j + k}$ for $j \in \mathbb{Z}$, the equality \eqref{eq:conv-identity} gives
  \begin{align*}
    \frac{|(f \ast \varphi_j)(x+z)|}{\nu_{\beta}(A^j z)}
    &\leq \sum_{k = -N}^N \frac{|(f \ast \varphi_j \ast \varphi_{j+k}
      \ast \psi_{j+k})(x+z)|}{\nu_{\beta}(A^j z)} \\
    &\leq \sum_{k = -N}^N \int_{\R^d} \frac{|(f \ast \varphi_j)(y)|}{\nu_{\beta}(A^j z)}
      |(\varphi_{j+k} \ast \psi_{j+k})(x+z-y)| \, dy \\
    &\lesssim \sum_{k = -N}^N \int_{\R^d} \frac{|(f \ast \varphi_j)(y)|}
      {\nu_{\beta}\big(A^j (x-y)\big)}
       \nu_{\beta}\big(A^j (x+z-y)\big) \,
      |(\varphi_{j+k} \ast \psi_{j+k})(x+z-y)| \, dy
  \end{align*}
   for all $x, z \in \R^d$, $j \in \Z$ and
  $\beta > 0$. Combining
  H\"older's inequality for $\tfrac{1}{q}+\tfrac{1}{q'} = 1$ and the
  translation invariance of $L^{q'}(\R^d)$ yields
  \begin{align*}
    \frac{|(f \ast \varphi_j)(x+z)|}{\nu_{\beta}(A^j z)}
    &\lesssim \sum_{k=-N}^N \bigg\| \frac{(f \ast \varphi_j)}
      {\nu_{\beta}\big(A^j (x- \mkern2mu \cdot \mkern2mu)\big)} \bigg\|_{L^q}
      \Big\| \nu_{\beta}\big(A^j (x+z- \mkern2mu \cdot \mkern2mu)\big) \,
      (\varphi_{j+k} \ast \psi_{j+k})(x+z- \mkern2mu \cdot \mkern2mu) \Big\|_{L^{q'}} \\
    &= \sum_{k=-N}^N \bigg\| \frac{(f \ast \varphi_j)}
      {\nu_{\beta}\big(A^j (x- \mkern2mu \cdot \mkern2mu)\big)} \bigg\|_{L^q}
      \Big\| \nu_{\beta}\big(A^j (\, \cdot \,)\big) \,
      (\varphi_{j+k} \ast \psi_{j+k})(\, \cdot \,) \Big\|_{L^{q'}}.
  \end{align*}
If $q \in (1,\infty)$,  applying
  the transformation $A^j y \mapsto \tilde{y}$ in the $L^{q'}$-norm above gives
  \begin{align*}
     \Big\| \nu_{\beta}\big(A^j (\, \cdot \,)\big) \,
    (\varphi_{j+k} \ast \psi_{j+k})(\, \cdot \,) \Big\|_{L^{q'}}^{q'}
    & = \int_{\R^d} \big(\nu_{\beta}(A^j y )\big)^{q'} \, |\det A|^{j q'} \,
      |(\varphi_{k} \ast \psi_{k})(A^j y)|^{q'} \, dy \\
     & =  |\det A|^{j q'-j}  \int_{\R^d} \big(\nu_{\beta}( \tilde{y} )\big)^{q'} \,
    |(\varphi_{k} \ast \psi_{k})( \tilde{y})|^{q'} \, d\tilde{y} .
  \end{align*}
 Using the seminorms defined in Equation \eqref{eq:semi-norms} and the fact that
  $\varphi, \psi \in \Schwartz(\R^d)$, the last integral can be estimated by
  a constant $C_k = C_k(A,\varphi, q, \beta) > 0$.  Combining the above gives
  \begin{align*}
    \frac{|(f \ast \varphi_j)(x+z)|}{\nu_{\beta}(A^j z)}
    & \lesssim  \bigg\| \frac{(f \ast \varphi_j)}
      {\nu_{\beta}\big(A^j (x- \,\cdot\,)\big)} \bigg\|_{L^q}
      \sum_{k=-N}^N |\det A|^{j(1-1/q')} \, C_k^{1/q'} \\
    & \lesssim |\det A|^{j/q} \bigg\| \frac{(f \ast \varphi_j)}
      {\nu_{\beta}\big(A^j (x- \,\cdot\,)\big)} \bigg\|_{L^q} ,
  \end{align*}
  with implicit constant depending on $A, \beta, q, \varphi$.  Taking the supremum over
  $z \in \R^d$ and the $q$-th power yields the claim for $q \in (1, \infty)$.
  The case  $q = 1$ follows by the same arguments
  with the usual modifications.
  \\~\\
  \textbf{Step 2.} \emph{(The case $q \in (0, 1)$).} For $f \in \Schwartz' (\R^d)$, the estimate obtained in Step 1 (for $q=1$) gives
  \begin{align*}
    \DoubleStar{j} f (x)
    & \lesssim |\det A|^ j \int_{\R^d}  \frac{|(f \ast \varphi_j)(y)|}{\nu_\beta\big(A^j(x-y)\big)}
      \, dy \\
    & = |\det A|^ j \int_{\R^d} \bigg( \frac{|(f \ast \varphi_j)(y)|}{\nu_\beta\big(A^j(x-y)\big)} \bigg)^{1-q}
       \bigg( \frac{|(f \ast \varphi_j)(y)|}{\nu_\beta\big(A^j(x-y)\big)} \bigg)^{q}
      \, dy \\
    & \leq (\DoubleStar{j} f (x))^{1-q} |\det A|^ j
      \int_{\R^d}\bigg( \frac{|(f \ast \varphi_j)(y)|}
      {\nu_\beta\big(A^j(x-y)\big)} \bigg)^{q}
      \, dy . \numberthis \label{eq:peetre-est-eq1}
  \end{align*}
  By Lemma~\ref{lem:peetre-max-finitness}, there exists
  $N' = N'(f) \in \N$ such that $\DoubleStar{j} f (x) < \infty$ for all
  $\beta \geq N'$.  Hence, if $\beta \geq N'$, the claim follows
  immediately from the inequality \eqref{eq:peetre-est-eq1}.

  For the case
  $\beta < N'$, we use the already proven result for $N'$ to obtain
  \begin{align*}
    |(f \ast \varphi_j) (z)|^q
    & \lesssim |\det A|^ j \int_{\R^d}  \bigg( \frac{|(f \ast \varphi_j)(y)|}
      {\nu_{N'}\big(A^j(z-y)\big)} \bigg)^q
       \, dy \\
    & \leq |\det A|^ j \int_{\R^d}  \bigg(\frac{|(f \ast \varphi_j)(y)|}
      {\nu_\beta\big(A^j(z-y)\big)} \bigg)^q
      \, dy \\
    & \lesssim |\det A|^ j \int_{\R^d}  \bigg(\frac{|(f \ast \varphi_j)(y)|}
      {\nu_\beta\big(A^j(x-y)\big)} \bigg)^q \, \nu_\beta\big(A^j(x-z)\big)^q
       \, dy,
  \end{align*}
  where the second inequality used that $\nu_{N'}(x)^{-q} \leq \nu_\beta(x)^{-q}$ for
  $0 < \beta < N'$.  Consequently,
  \begin{equation*}
     \bigg( \frac{|(f \ast \varphi_j)(z)|}
     {\nu_\beta\big(A^j(x-z)\big)} \bigg)^q
     \lesssim |\det A|^ j \int_{\R^d}  \bigg(\frac{|(f \ast \varphi_j)(y)|}
      {\nu_\beta\big(A^j(x-y)\big)} \bigg)^q
       \, dy .
  \end{equation*}
  The right-hand side being independent of $z \in \R^d$, taking
  the supremum yields the claim for $\beta < N'$. Overall, this completes the proof.
\end{proof}

\subsection{Maximal function characterizations}
In this section, the matrix $A \in \mathrm{GL}(d, \mathbb{R})$ is often
additionally assumed to be \emph{exponential}, i.e., it is assumed
that $A$ admits the form $A = \exp(B)$ for some
$B \in \mathbb{R}^{d\times d}$. Then the power $A^s = \exp(s B)$ is
well-defined for $s \in \mathbb{R}$.

For $\varphi \in \Schwartz (\mathbb{R}^d)$ and
$s, \beta \in \mathbb{R}$ with $\beta > 0$, the associated Peetre-type
maximal function of $f \in \Schwartz' (\mathbb{R}^d)$ is defined as in
\eqref{eq:peetre_maximal} by
\[
  \DoubleStar{s} f(x) = \sup_{z \in \mathbb{R}^d} \frac{|(f \ast
    \varphi_s)(x+z)|}{(1+\rho_A(A^s z))^{\beta}}, \quad x \in
  \mathbb{R}^d
\]
whenever $A$ is exponential.

The following theorem forms a main result of this paper. It
characterizes the anisotropic Triebel-Lizorkin spaces $\TLi$, with
$0 < q \leq \infty$, in terms of Peetre-type maximal functions. The
result forms an extension of \cite[Theorem 1]{bui2000characterization}
to possibly anisotropic dilations.

\begin{theorem}\label{thm:norm_equiv}
  Suppose $A \in \mathrm{GL}(d, \mathbb{R})$ is expansive and exponential.
  Assume that $\varphi \in \Schwartz(\R^d)$ has compact Fourier support
  and satisfies \eqref{eq:analyzing_support} and \eqref{eq:analyzing_positive}.

  Then, for all $q \in (0, \infty)$, $\alpha \in \mathbb{R}$
  and $\beta >  1/q$, the norm equivalences
  \begin{equation}
    \begin{aligned}\label{eq:norm_equiv}
      \| f \|_{\TLi} & \asymp \sup_{\ell \in \Z, w \in \R^d}
      \bigg( \dashint_{A^\ell
        \Omega + w} \int_{-\ell}^\infty \big( |\det A|^{\alpha s}
       \DoubleStar{s} f(x) \big)^q \, ds \, dx
      \bigg)^{1/q} \\
      & \asymp \sup_{\ell \in \Z, w \in \R^d}
      \bigg( \dashint_{A^\ell
        \Omega + w} \sum_{j=-\ell}^\infty \big( |\det A|^{\alpha j}
       \DoubleStar{j} f(x) \big)^q \, dx \bigg)^{1/q}
    \end{aligned}
  \end{equation}
  hold for all $f \in \SP$. For $q = \infty$, $\alpha \in \R$ and
  $\beta >1$, the following equivalences hold
  \begin{equation}
    \begin{aligned}\label{eq:norm_equiv_infty}
      \| f \|_{\TLii} & \asymp \sup_{\ell \in \Z, w \in \R^d} \sup_{s
        \in \R, s \geq - \ell} \; \dashint_{A^\ell \Omega + w} |\det A|^{\alpha s}
      \DoubleStar{s} f(x) \, dx
       \\
      & \asymp \sup_{\ell \in \Z, w \in \R^d} \sup_{j \in \Z, j \geq -
        \ell} \; \dashint_{A^\ell \Omega + w} |\det A|^{\alpha j}
      \DoubleStar{j} f(x) \, dx  .
    \end{aligned}
  \end{equation}
  for all $f \in \SP$.

  (The function $\DoubleStar{s}f : \mathbb{R}^d \to [0,\infty]$  is well-defined
  for $f \in \SP$, since $\varphi$ has infinitely many vanishing moments and hence
  $P \ast \varphi_s = 0$ for every $P \in \CalP(\R^d)$.)
\end{theorem}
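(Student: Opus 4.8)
\noindent\emph{Proof strategy.}
The plan is to treat the case $q\in(0,\infty)$ by a cyclic chain of estimates between $\|f\|_{\TLi}$, the ``continuous'' quantity $\mathrm{(C)}$ (the one involving $\int_{-\ell}^{\infty}\cdots\,ds$), and the ``discrete'' quantity $\mathrm{(D)}$ (the one involving $\sum_{j=-\ell}^{\infty}\cdots$), and then to rerun the argument for $q=\infty$ with suprema in place of the scale integrals and sums. By \Cref{lem:independence_balls} it suffices to work throughout with the balls $A^\ell\Omega+w$, $w\in\R^d$, in place of the cubes $A^\ell([0,1]^d+k)$ in the definition of $\|f\|_{\TLi}$. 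The parenthetical assertion is immediate: since $\widehat\varphi$ vanishes near the origin, $\varphi$ has vanishing moments of every order, so $P\ast\varphi_s=0$ for every polynomial $P$ and hence $f\ast\varphi_s$ and $\DoubleStar{s}f$ descend to $\SP$. Finally, $\|f\|_{\TLi}\lesssim\mathrm{(D)}$ is trivial from $|(f\ast\varphi_j)(x)|=|(f\ast\varphi_j)(x+0)|\le\DoubleStar{j}f(x)$, so the work is to establish $\mathrm{(D)}\lesssim\|f\|_{\TLi}$, $\mathrm{(C)}\lesssim\mathrm{(D)}$ and $\|f\|_{\TLi}\lesssim\mathrm{(C)}$.

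\noindent\emph{The estimate $\mathrm{(D)}\lesssim\|f\|_{\TLi}$.}
This is the core of the theorem and the point at which the argument departs from \cite[Theorem~3.5]{KvVV2021anisotropic}. I would start from the sub-mean-value estimate \eqref{eq:peetre-estimate} of \Cref{prop:peetre-estimate} used with exponent $q$ — legitimate since $\beta q>1$ — and split the $y$-integral into the $\rho_A$-annuli of $x$ at scale $|\det A|^{-j}$. Using $\rho_A(A^jv)=|\det A|^j\rho_A(v)$ together with $\beta q>1$, this yields a pointwise bound of the form
\[
  (\DoubleStar{j}f(x))^{q}
  \;\lesssim\;
  \sum_{i\ge 0} c_i \dashint_{A^{-j+i}\Omega+x} |(f\ast\varphi_j)(y)|^{q} \, dy ,
  \qquad c_i\lesssim|\det A|^{-i(\beta q-1)} ,
\]
and in particular $\sum_{i\ge0}(1+i)\,c_i<\infty$. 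Inserting this into the average over $A^\ell\Omega+w$ and applying Fubini turns the inner quantity into the average over $A^\ell\Omega+w$ of a fixed-scale averaging operator at scale $|\det A|^{-j+i}$; since $j\ge-\ell$ we have $-j+i\le\ell+i$, and an averaging-of-averages (Vitali-type) computation shows that this collapses, up to a bounded factor, either to the average of $|(f\ast\varphi_j)|^{q}$ over a boundedly enlarged ball $A^{\ell+c}\Omega+w$ — on which the scale $j\ge-\ell\ge-(\ell+c)$ is admissible, so that summing over $j$ produces $\le\|f\|_{\TLi}^{q}$ — or, in the (exactly $i$) cases where the averaging scale exceeds $\ell$, to single admissible contributions of size $\|f\|_{\TLi}^{q}$; summing against $c_i$ and using $\sum_i(1+i)c_i<\infty$ closes the estimate. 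The substantive obstacle — and the reason a Fefferman--Stein inequality cannot simply be quoted, being unavailable at $p=\infty$ — is that $\|f\|_{\TLi}$ only controls averages over the \emph{coarse} scales $j\ge-\ell$, whereas \eqref{eq:peetre-estimate} naturally produces averages at the \emph{fine} scale $|\det A|^{-j}$; the recurring device for reconciling the two is to pass to a coarser ball $A^{\ell'}\Omega+w\supseteq A^\ell\Omega+w$, which is admissible for the scale $j$ at the cost of the volume ratio $|\det A|^{\ell'-\ell}$, harmless because only a controlled range of scales ever intervenes.

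\noindent\emph{Passing between discrete and continuous scales.}
For $\mathrm{(C)}\lesssim\mathrm{(D)}$ and $\|f\|_{\TLi}\lesssim\mathrm{(C)}$ I would use Calder\'on-type reproducing identities built from \eqref{eq:analyzing_calderon}: both its discrete form and, thanks to exponentiality of $A$, a continuous-scale form obtained by reindexing the sum along the integer shifts of a non-integer base scale. Writing the relevant $\varphi$ at one scale as a finite sum of convolutions of $\varphi$ at nearby (integer, respectively shifted) scales against rapidly decaying kernels, and estimating exactly as in the proof of \Cref{prop:peetre-estimate} — using $1+\rho_A(A^{t}z)\asymp 1+\rho_A(A^{t'}z)$ whenever $|t-t'|\le N$ — one obtains the pointwise comparisons
\[
  \DoubleStar{s}f(x)\;\lesssim\;\sum_{j\in\Z,\,|j-s|\le N}\DoubleStar{j}f(x)
  \qquad\text{and}\qquad
  |(f\ast\varphi_j)(x)|\;\lesssim\;\sum_{|k|\le N}\DoubleStar{s+k}f(x)\quad(|s-j|\le 1).
\]
Raising these to the $q$-th power (subadditivity of $t\mapsto t^{q}$ if $q<1$, H\"older's inequality if $q\ge1$) and integrating the first one in $s$ yields $\mathrm{(C)}\lesssim\mathrm{(D)}$; for $\|f\|_{\TLi}\lesssim\mathrm{(C)}$ one integrates the scale-independent quantity $|(f\ast\varphi_j)(x)|^{q}$ over $s\in[j,j+1]$, which turns the finite sum over $k$ on the right into an integral of $(\DoubleStar{u}f(x))^{q}$ over a bounded range of scales $u$ — a manoeuvre valid uniformly in $q\in(0,\infty)$ that sidesteps the inequality $(\int g)^{q}\le\int g^{q}$, which fails for $q<1$. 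In both steps the finitely many scales near the boundary $-\ell$ are absorbed by the nested-ball rescaling of the previous paragraph.

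\noindent
The case $q=\infty$ runs along the same lines, with $\sup_{s\ge-\ell}$ and $\sup_{j\ge-\ell}$ replacing the scale integrals and sums, and with \eqref{eq:peetre-estimate} applied at exponent $1$, which is why $\beta>1$ is assumed; the fine-scale averages are again handled by the annular decomposition and nested-ball rescaling. I expect the principal difficulty throughout to be exactly this reconciliation: converting the fine-scale ($|\det A|^{-j}$) averages delivered by the sub-mean-value property into the coarse-scale ($j\ge-\ell$) averages defining $\|f\|_{\TLi}$, while keeping every enlargement of an averaging ball within a bounded range of scales.
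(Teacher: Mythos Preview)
Your overall strategy---the cyclic chain $\|f\|_{\TLi}\lesssim\mathrm{(C)}\lesssim\mathrm{(D)}\lesssim\|f\|_{\TLi}$, the use of \Cref{lem:independence_balls}, and the pointwise comparisons between integer and real scales based on the Calder\'on reproducing identity---matches the paper's proof closely; your handling of $\mathrm{(C)}\lesssim\mathrm{(D)}$ and $\|f\|_{\TLi}\lesssim\mathrm{(C)}$ is essentially the paper's Steps~1 and~2, which cite the corresponding pointwise estimates from the companion paper.

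Where you genuinely diverge is in the key estimate $\mathrm{(D)}\lesssim\|f\|_{\TLi}$. The paper applies \Cref{prop:peetre-estimate} with exponent $q/r$ for $r=\sqrt{\beta q}>1$, then splits $\R^d$ first at the coarse scale $\ell$ (into $Q_\ell(x)$ and its complement) and, for the inner part, further at the fine scale $-j$; the inner piece is controlled by the scalar Hardy--Littlewood maximal operator and its $L^r$ boundedness, while the outer annuli are handled via Minkowski's integral inequality and Jensen. Your route is more direct: you apply \Cref{prop:peetre-estimate} with exponent $q$, split only once into annuli at scale $-j$, average over $x\in A^\ell\Omega+w$ by Fubini, and reduce everything to averages over nested balls---no maximal function, no Minkowski, no Jensen. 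This works precisely because $\beta q>1$ makes the annular weights $c_i\asymp|\det A|^{-i(\beta q-1)}$ summable even against the linear factor $(1+i)$ that counts the (at most $i+1$, not ``exactly $i$'') scales $j$ for which the averaging radius exceeds $\ell$. Your argument is correct and more elementary for this particular statement; the paper's $r=\sqrt{\beta q}$ device is the classical template (Str\"omberg--Torchinsky, Rychkov) and tends to be more portable to settings where a vector- or weighted-maximal inequality is the natural tool, but here it is not needed.
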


\begin{remark} \label{rem:general_expansive}
The proof of \Cref{thm:norm_equiv} shows that the discrete characterizations
    \begin{align*}
      \| f \|_{\TLi} \asymp \sup_{\ell \in \Z, w \in \R^d}
      \bigg( \dashint_{A^\ell
        \Omega + w} \sum_{j=-\ell}^\infty \big( |\det A|^{\alpha j}
       \DoubleStar{j} f(x) \big)^q \, dx \bigg)^{1/q}, \quad f \in \SP
    \end{align*}
and
    \begin{align*}
      \| f \|_{\TLii}  \asymp \sup_{\ell \in \Z, w \in \R^d} \sup_{j \in \Z, j \geq -
        \ell} \; \dashint_{A^\ell \Omega + w} |\det A|^{\alpha j}
      \DoubleStar{j} f(x) \, dx, \quad f \in \SP,
    \end{align*}
also hold without the assumption that $A \in \mathrm{GL}(d, \mathbb{R})$ is exponential.
\end{remark}

\begin{proof}[Proof of \Cref{thm:norm_equiv}]
  Only the cases $q \in (0,\infty)$ will be treated; the case
  $q = \infty$ follows by the arguments for $q = 1$, with the usual
  modification to accommodate the supremum.  The proof is split into
  three steps and for some parts we refer to calculations from the proof of
  \cite[Theorem~3.5]{KvVV2021anisotropic}.

  Throughout the proof, we will make use of the equivalent norms
  \[
  \| f \|_{\TLi} \asymp
   \sup_{\ell \in \Z, w \in \R^d} \bigg(
  \dashint_{A^\ell \Omega + w} \sum_{j=-\ell}^\infty \big( |\det A|^{\alpha j}  |(f \ast \varphi_j)(x)| \big)^q
   \, dx
     \bigg)^{1/q}
  \]
  and
   \[
  \| f \|_{\TLii} \asymp
   \sup_{\ell \in \Z, w \in \R^d} \sup_{j \in \mathbb{Z}, j \geq - \ell}
  \dashint_{A^\ell \Omega + w}  |\det A|^{\alpha j}  |(f \ast \varphi_j)(x)|
   \, dx
  \]
  provided by \Cref{lem:independence_balls}.
  \\~\\
  \textbf{Step 1.} In this step it will be shown that $\| f \|_{\TLi}$
  can be bounded by the middle term of \eqref{eq:norm_equiv}.  This
  step is modeled on Step 1 of the proof of \cite[Theorem
  3.5]{KvVV2021anisotropic}.  By the calculations constituting
  \cite[Equations (3.7)--(3.11)]{KvVV2021anisotropic}, it follows that,
  for $t \in [0,1]$, there exists $N = N(A, \varphi) \in \mathbb{N}$
  such that
  \begin{align*}
    \Big\| \!
    \Big(
    |\det A|^{\alpha j}  |(f \ast \varphi_j)(x)|
    \Big)_{j = - \ell}^\infty
    \Big\|_{\ell^q} \!
    & \lesssim \sum_{k =-N}^N
      \bigg\|
      \bigg(
      |\det A|^{\alpha(j+k+t)} \,
       \DoubleStar{j+k+t} f (x)
      \bigg)_{j = - \ell}^\infty
      \bigg\|_{\ell^q} \\
    & = \sum_{k =-N}^N
      \bigg\|
      \bigg(
      |\det A|^{\alpha(j+t)} \,
       \DoubleStar{j+t} f (x)
      \bigg)_{j = k - \ell}^\infty
      \bigg\|_{\ell^q} \\
    &\lesssim \bigg\|
      \bigg(
      |\det A|^{\alpha(j+t)}
       \DoubleStar{j+t} f (x)
      \bigg)_{j = - \ell-N}^\infty
      \bigg\|_{\ell^q}.
      \numberthis \label{eq:LHSindependt}
 \end{align*}
  If $q < \infty$, then raising \eqref{eq:LHSindependt} to the $q$-th
 power and integrating over $t \in [0,1]$ gives
 \begin{align*}
   \sum_{j = - \ell}^\infty \Big( |\det A|^{\alpha j}  |(f \ast \varphi_j)(x)| \Big)^q
   &\lesssim \int_0^1 \sum_{j = - (\ell+N)}^\infty \Big(|\det A|^{\alpha(j+t)}
      \DoubleStar{j+t} f (x) \Big)^q \, dt \\
   &= \int_{- (\ell + N)}^\infty \Big(|\det A|^{\alpha s}
      \DoubleStar{s} f (x) \Big)^q \, ds.
 \end{align*}
Let $w \in \R^d$ be arbitrary and set $Q_\ell = A^\ell \Omega + w$.
 By Lemma~\ref{lem:QuasiNorm}, it follows that $ Q_\ell \subset Q_{\ell +N}$. Therefore,
 averaging over $Q_{\ell}$ gives
 \begin{align*}
    \dashint_{Q_\ell}
     \sum_{j = - \ell}^\infty \Big( |\det A|^{\alpha j}
     |(f \ast \varphi_j)(x)| \Big)^q \, dx
   & \lesssim \dashint_{Q_\ell}
     \int_{- (\ell + N)}^\infty \!\Big(|\det A|^{\alpha s}
      \DoubleStar{s} f (x) \Big)^q \, ds \, dx \\
   & \leq \frac{\Lebesgue{ Q_{\ell+N}}}{\Lebesgue{Q_\ell}}
    \dashint_{Q_{\ell + N}}
     \int_{- (\ell + N)}^\infty \!\Big(|\det A|^{\alpha s}
      \DoubleStar{s} f (x) \Big)^q \, ds \, dx ,
 \end{align*}
 where
 $\frac{\Lebesgue{ Q_{\ell+N}}}{\Lebesgue{Q_\ell}} = |\det A|^N \lesssim 1$.  Consequently,
 taking  the $q$-th root and the supremum over
 $\ell \in \Z$ and $w \in \R^d$ yields the desired estimate.
 \\~\\
 \textbf{Step 2.} In this step we estimate the middle term by the
 right-most term of \eqref{eq:norm_equiv}.  This requires discretizing
 the inner-most integral, which works analogously to Step 2 in the proof of
 \cite[Theorem~3.5]{KvVV2021anisotropic}.  By
 \cite[Equation~(3.15)]{KvVV2021anisotropic}, for
 $t \in [0,1]$, there exists $N = N(A, \varphi) \in \mathbb{N}$ such that
  \begin{align*}
    \Big(
      |\det A|^{\alpha (j+t)}  \DoubleStar{j+t} f (x)
    \Big)^q
    &\lesssim  \sum_{k = -N}^N
               \Big(
                 |\det A|^{\alpha (j + k)}
                  \DoubleStar{j+k} f (x)
               \Big)^q.
    \numberthis \label{eq:det_Peetre}
  \end{align*}
  Starting with the inner-most integral of the middle term in
  \eqref{eq:norm_equiv}, we use a simple periodization argument and
  \eqref{eq:det_Peetre} to obtain
  \begin{align*}
    \int_{-\ell}^\infty \Big(
    |\det A|^{\alpha s}  \DoubleStar{s} f (x)
    \Big)^q \, ds
    & = \sum_{j= - \ell}^\infty \int_{0}^1 \Big(
      |\det A|^{\alpha (j+t)}  \DoubleStar{j+t} f (x)
      \Big)^q \, dt \\
    & \lesssim \sum_{j= - \ell}^\infty \sum_{k= -N}^N \Big(
      |\det A|^{\alpha (j+k)}  \DoubleStar{j+k} f (x)
      \Big)^q \\
    & \lesssim \sum_{j= - (\ell+N)}^\infty  \Big(
      |\det A|^{\alpha j}  \DoubleStar{j} f (x)
      \Big)^q  .
  \end{align*}
  Taking the averaged integral over $Q_\ell = A^\ell \Omega + w$ yields
  \begin{align*}
     \dashint_{Q_\ell} \int_{-\ell}^\infty \Big(
    |\det A|^{\alpha s}  \DoubleStar{s} f (x)
    \Big)^q   ds \, dx
    & \lesssim \dashint_{Q_\ell}
      \sum_{j= - (\ell+N)}^\infty  \Big(
      |\det A|^{\alpha j}  \DoubleStar{j} f (x)
      \Big)^q  \, dx \\
    & \lesssim \dashint_{Q_{\ell+N}}
      \sum_{j= - (\ell+N)}^\infty  \Big(
      |\det A|^{\alpha j}  \DoubleStar{j} f (x)
      \Big)^q  \, dx,
  \end{align*}
  where we used $Q_\ell \subset Q_{\ell+N}$ and
  $\frac{1}{\Lebesgue{Q_\ell}} = |\det A|^N
  \frac{1}{\Lebesgue{Q_{\ell+N}}}$ in the last step.  Taking the
  supremum over all $\ell \in \Z$ and $w \in \R^d$ and the $q$-th root
  yields the claim for $q \in (0,\infty)$.
 \\~\\
  \textbf{Step 3.} Lastly, it will be shown that the right-most term of
  \eqref{eq:norm_equiv} can be bounded by $\|f\|_{\TLi{}}$.
  We start with using Proposition~\ref{prop:peetre-estimate} for the
  exponent $0< q/r<\infty$, where $r:=\sqrt{\beta q} >1$ by
  assumption.  This gives for all $x \in \R^d$ and $\ell \in \Z$
  \begin{equation*}
    \sum_{j= -\ell}^\infty |\det A|^{\alpha jq} \Big[ \big( \DoubleStar{j}
    f (x) \big)^{q/r} \Big]^r \lesssim \sum_{j= -\ell}^\infty |\det
    A|^{\alpha j q}\Bigg[ |\det A|^j \int_{\R^d} \bigg( \frac{|(f
      \ast\varphi_j)(y)|} {\nu_{\beta}\big(A^j(x-y)\big)} \bigg)^{q/r}
     dy \Bigg]^r.
  \end{equation*}
  For fixed, but arbitrary $x \in \R^d$ and $\ell \in \Z$, we partition
  $
  \R^d = Q_\ell(x) \cup \bigcup_{k=0}^\infty \mathring{Q}_{\ell+k+1} (x)$,
  where
  \[ Q_\ell(x):= A^\ell \Omega + x \quad \text{and} \quad \mathring{Q}_{\ell+k+1} (x) := Q_{\ell+k+1} (x) \setminus Q_{\ell+k} (x).
  \]
  Combining this
  with the simple fact that $(a+b)^r \lesssim a^r + b^r$ for
  $a,b \geq 0$ yields
  \begin{align*}
   &\sum_{j= -\ell}^\infty |\det A|^{\alpha jq} \Big[ \big( \DoubleStar{j}
    f (x) \big)^{q/r} \Big]^r \\
    & \quad \quad \lesssim  \sum_{j= -\ell}^\infty  |\det A|^{\alpha j q}\Big[
      |\det A|^j \int_{Q_\ell (x)} \bigg( \frac{|(f \ast\varphi_j)(y)|}
      {\nu_{\beta}\big(A^j(x-y)\big)} \bigg)^{q/r}
       dy \Big]^r  \\
    & \quad \quad \quad \quad + \sum_{j= -\ell}^\infty  |\det A|^{\alpha j q}\Big[
      |\det A|^j \sum_{k=0}^\infty \int_{\mathring{Q}_{\ell+k+1} (x)}
      \bigg( \frac{|(f \ast\varphi_j)(y)|}
      {\nu_{\beta}\big(A^j(x-y)\big)} \bigg)^{q/r}
       dy \Big]^r \\
     & \quad \quad =: S_1 + S_2.
  \end{align*}
 In the remainder, the series defining $S_1$ and $S_2$ will be estimated.
 \\~\\
  \textit{Step 3.1.} We look at the sum $S_1$ first.  Note that since
  $|\det A| > 1$, there exists $M \in \N$ such that
  $|\det A|^M \geq 2 C$, where $C>0$ denotes the constant in the
  triangle-inequality for $\rho_A$ (cf. \Cref{lem:QuasiNorm}).
  A straightforward computation
  shows that $Q_\ell (x) \subset Q_{\ell+M}(w)$ for all $w \in \R^d$ and
  $\ell \in \Z$ whenever $x \in Q_\ell(w)$.
  Therefore, for all $w \in \R^d$, $\ell \in \Z$ and
  $x \in Q_\ell(w)$, it follows that
  \begin{align*}
    &|\det A|^j \int_{Q_\ell (x)}
     \bigg( \frac{|(f \ast\varphi_j)(y)|}
    {\nu_{\beta}\big(A^j(x-y)\big)}\bigg)^{q/r}  dy \\
    & \quad \quad \leq |\det A|^j \int_{\R^d} \bigg( \frac{|(f \ast\varphi_j)(y)|}
      {\nu_{\beta}\big(A^j(x-y)\big)} \bigg)^{q/r} \Indicator_{Q_{\ell+M}(w)}(y)  \, dy \\
    & \quad \quad = |\det A|^j \int_{Q_{-j}(x)} \bigg( \frac{|(f \ast\varphi_j)(y)|}
      {\nu_{\beta}\big(A^j(x-y)\big)} \bigg)^{q/r} \Indicator_{Q_{\ell+M}(w)}(y)  \, dy\\
    & \quad \quad \quad \quad + \sum_{m=0}^\infty |\det A|^j \int_{\mathring{Q}_{m-j+1}(x)}
     \bigg( \frac{|(f \ast\varphi_j)(y)|}
      {\nu_{\beta}\big(A^j(x-y)\big)} \bigg)^{q/r} \Indicator_{Q_{\ell+M}(w)} (y) \, dy \\
    &\quad \quad =: I_1 + I_2.
  \end{align*}
  To estimate the terms $I_1$ and $I_2$, we will use the (anisotropic)
  Hardy-Littlewood maximal operator for locally integrable
  $f : \mathbb{R}^d \to \mathbb{C}$ given by
  \[
  M_{\rho_A} f(x) = \sup_{B \ni x} \;
      \dashint_B
        |f(y)|
      \; dy, \quad x \in \mathbb{R}^d,
  \]
  where $B = B_{\rho_A}(z, s)$ ranges over all metric balls containing
  $x$.

  For estimating $I_1$, note that
  $\nu_{\beta}\big(A^j(x-y)\big)^{-q/r}=\big(1+\rho_A(A^j(x-y))\big)^{-\beta q/r} \leq 1$ yields
  \begin{align*}
    I_1
    & \leq |\det A|^j \int_{Q_{-j}(x)} |(f \ast\varphi_j)(y)|^{q/r}
      \Indicator_{Q_{\ell+M}(w)}(y)  \, dy \\
     & \leq M_{\rho_A} \big(|f \ast\varphi_j|^{q/r} \Indicator_{Q_{\ell+M}(w)} \big)(x).
      \numberthis \label{eq:norm-equiv-Step3-I1}
  \end{align*}

  For estimating $I_2$, note that $\rho_A(A^j (x-y)) = |\det A|^{m}$ for
  $y \in \mathring{Q}_{m-j+1}(x)$ by definition of $\rho_A$ (see
  \eqref{eq:step_norm}). This and setting
  $\delta:= \beta q/r - 1$ implies
  \begin{align*}
    I_2
    & \leq \sum_{m=0}^\infty |\det A|^{j - \beta q m /r} \int_{\mathring{Q}_{m-j+1}(x)}
      |(f \ast\varphi_j)(y)|^{q/r} \Indicator_{Q_{\ell+M}(w)} (y) \, dy \\
    & \leq \sum_{m=0}^\infty |\det A|^{-m \delta+1}|\det A|^{j -  m - 1} \int_{Q_{m-j+1}(x)}
      |(f \ast\varphi_j)(y)|^{q/r} \Indicator_{Q_{\ell+M}(w)} (y) \, dy \\
    & \leq  M_{\rho_A}\big(|f \ast\varphi_j|^{q/r}  \Indicator_{Q_{\ell+M}(w)} \big) (x)
      \sum_{m=0}^\infty |\det A|^{-m \delta+1} \\
    &\lesssim  M_{\rho_A}\big(|f \ast\varphi_j|^{q/r}  \Indicator_{Q_{\ell+M}(w)} \big) (x),
      \numberthis \label{eq:norm-equiv-Step3-I2}
  \end{align*}
  where the last inequality used that $\delta= \beta q/r - 1 >0$.  Here,
  the implicit constant only depends on $A$, $q$ and $\beta$.

  Combining \eqref{eq:norm-equiv-Step3-I1} and
  \eqref{eq:norm-equiv-Step3-I2} shows for $x \in Q_\ell(w)$ that
  \begin{align*}
    S_1
    \lesssim \sum_{j = - \ell}^\infty |\det A|^{\alpha j q}
    \Big[
    M_{\rho_A}\big(|f \ast\varphi_j|^{q/r}  \Indicator_{Q_{\ell+M}(w)} \big) (x)
    \Big]^r.
  \end{align*}
  Thus, averaging over $x \in Q_\ell(w)$ and applying the
  maximal inequalities for $L^r(\R^d)$ (see, e.g., \cite[Theorem 1.2]{grafakos2009vector}), yield
  \begin{align*}
    \dashint_{Q_\ell(w)} \! \! \! \! S_1 \, dx
     & \lesssim \; \sum_{j = - \ell}^\infty |\det A|^{\alpha j q} \;
    \dashint_{Q_\ell(w)}\Big[
    M_{\rho_A}\big(|f \ast\varphi_j|^{q/r}  \Indicator_{Q_{\ell+M}(w)} \big) (x)
       \Big]^r   dx \\
     & \lesssim \; \sum_{j = - (\ell+M)}^\infty \! \! \! \! \! \! |\det A|^{\alpha j q}
    \frac{1}{\Lebesgue{Q_{\ell+M}(w)}} \int_{\R^d}\Big[
    M_{\rho_A}\big(|f \ast\varphi_j|^{q/r}  \Indicator_{Q_{\ell+M}(w)} \big) (x)
       \Big]^r   dx \\
    & \lesssim \; \sum_{j = - (\ell+M)}^\infty \! \! \! \! \! \!|\det A|^{\alpha j q}
    \frac{1}{\Lebesgue{Q_{\ell+M}(w)}} \int_{\R^d}
    |(f \ast\varphi_j)(x)|^{q}  \Indicator_{Q_{\ell+M}(w)}(x)
       \, dx.
  \end{align*}
  Lastly, taking the suprema over $w \in \R^d$ and $\ell \in \Z$ yields
  \begin{equation}
    \label{eq:norm-equiv-Step3-S1}
    \sup_{\ell \in \Z, w \in \R^d} \;
    \dashint_{Q_\ell(w)} \! \! \! \! S_1 \, dx
    \lesssim \|f\|_{\TLi{}}^q,
  \end{equation}
  with implicit constant depending on $A,\varphi,q$, and $\beta$.
  \\~\\
  \textit{Step 3.2.} In this step, we deal with the sum $S_2$.  Recall again that, for $y \in \mathring{Q}_{\ell+k+1} (x)$,
  $\rho_A((A^j(x-y))) = |\det A|^{j+k+\ell}$. Hence,
  \begin{align*}
    |\det A|^j
    &\sum_{k=0}^\infty \int_{\mathring{Q}_{\ell+k+1} (x)}
      \bigg( \frac{|(f \ast\varphi_j)(y)|}
      {\nu_{\beta}\big(A^j(x-y)\big)} \bigg)^{q/r}
       dy \\
    & \leq |\det A|^j \sum_{k=0}^\infty
      |\det A|^{-(j+k+\ell)\beta q /r}
      \int_{Q_{\ell+k+1} (x)}
      |(f \ast\varphi_j)(y)|^{q/r}
      \, dy \\
    & = |\det A|^{-\delta(j+\ell)+1} \sum_{k=0}^\infty
      |\det A|^{-\delta k}
      \dashint_{Q_{\ell+k+1} (x)}
      |(f \ast\varphi_j)(y)|^{q/r}
      \, dy ,
  \end{align*}
  where again $\delta:= \beta q / r -1 >0$.  Note that
  $|\det A|^{-\delta(j+\ell)} \leq 1$ for $j \geq - \ell$, which implies that
  \begin{align*}
    S_2
    & \lesssim \sum_{j = - \ell}^\infty |\det A|^{\alpha j q}
      \Big[\sum_{k=0}^\infty
      |\det A|^{-\delta k}
      \dashint_{Q_{\ell+k+1} (x)}
      |(f \ast\varphi_j)(y)|^{q/r}
      \, dy
      \Big]^r \\
    & \lesssim  \Big[\sum_{k=0}^\infty |\det A|^{-\delta k}
      \Big[ \sum_{j = - \ell}^\infty \Big(|\det A|^{\alpha j q/r}
      \dashint_{Q_{\ell+k+1} (x)}
      |(f \ast\varphi_j)(y)|^{q/r}
      \, dy \Big)^r
      \Big]^{1/r}
      \Big]^r,
  \end{align*}
  where we used Minkowski's integral inequality (see, e.g., \cite[Appendix 1]{stein1970singular}) to obtain the last line.
  An application of Jensen's inequality to the integral yields
  \begin{align*}
    \Big(\dashint_{Q_{\ell+k+1} (x)}
      |(f \ast\varphi_j)(y)|^{q/r}
    \, dy \Big)^r
    \leq
    \dashint_{Q_{\ell+k+1} (x)}
      |(f \ast\varphi_j)(y)|^{q}
    \, dy ,
  \end{align*}
  and consequently
  \begin{align*}
    S_2
    & \lesssim  \Big[\sum_{k=0}^\infty |\det A|^{-\delta k}
      \Big[ \sum_{j = - \ell}^\infty |\det A|^{\alpha j q}
      \dashint_{Q_{\ell+k+1} (x)}
      |(f \ast\varphi_j)(y)|^{q}
      \, dy
      \Big]^{1/r}
      \Big]^r \\
    & \leq  \Big[\sum_{k=0}^\infty |\det A|^{-\delta k}
      \Big[ \sum_{j = -( \ell+k+1)}^\infty |\det A|^{\alpha j q}
      \dashint_{Q_{\ell+k+1} (x)}
      |(f \ast\varphi_j)(y)|^{q}
      \, dy
      \Big]^{1/r}
      \Big]^r\\
    & \leq \sup_{\ell' \in \Z, x \in \R^d}
      \bigg ( \sum_{j = - \ell'}^\infty |\det A|^{\alpha j q}
      \dashint_{Q_{\ell'} (x)}
      |(f \ast\varphi_j)(y)|^{q}
      \, dy \bigg )
      \Big[\sum_{k=0}^\infty |\det A|^{-\delta k} \Big]^r \\
      &\lesssim \sup_{\ell' \in \Z, x \in \R^d}
      \bigg ( \sum_{j = - \ell'}^\infty |\det A|^{\alpha j q}
      \dashint_{Q_{\ell'} (x)}
      |(f \ast\varphi_j)(y)|^{q}
      \, dy \bigg ),
      \numberthis \label{eq:norm-equiv-Step3-eq2}
  \end{align*}
  where we used the index shift $\ell' = \ell + k +1$ in the penultimate
  estimate. Since the implicit constants are independent of
  $w \in \R^d$ and $\ell \in \Z$, it follows that
  \begin{equation}
    \label{eq:norm-equiv-Step3-S2}
    \sup_{\ell \in \Z, w \in \R^d}
    \;
    \dashint_{Q_\ell(w)} S_2 \, dx
    \lesssim \|f\|_{\TLi{}}^q .
  \end{equation}

  Overall, combining the estimates \eqref{eq:norm-equiv-Step3-S1} and
  \eqref{eq:norm-equiv-Step3-S2} finishes the proof.
\end{proof}

\section{The case $p=q=\infty$} \label{sec:pqinfty}
Let $A \in \mathrm{GL}(d, \mathbb{R})$ be expansive and suppose that
$\varphi \in \Schwartz(\R^d)$ has compact Fourier support and
satisfies \eqref{eq:analyzing_support} and
\eqref{eq:analyzing_positive}.
Following \cite{bownik2005atomic}, for fixed $\alpha \in \mathbb{R}$, the
associated \emph{homogeneous anisotropic Besov space}
$\Bii = \Bii(A,\varphi)$ is defined as the set of all $f \in \SP$ for
which
\[
  \| f \|_{\Bii} := \sup_{j \in \Z} \, \sup_{x \in \R^d}
  |\det A|^{\alpha j} |(f \ast \varphi_j)(x)|  < \infty.
\]
The space $\Bii$ is continuously embedded into $\SP$ and complete with respect to the quasi-norm $\| \cdot \|_{\Bii}$, cf. \cite[Proposition 3.3]{bownik2005atomic}. In addition,
it is independent of the choice of defining vector $\varphi \in \mathcal{S}(\R^d)$ by \cite[Corollary 3.7]{bownik2005atomic}.

The following theorem relates the anisotropic Triebel-Lizorkin spaces
$\TLi$, with $0 < q \leq \infty$, to the anisotropic Besov space
$\Bii$.  In particular, it shows that $\Bii = \TLii$, providing a characterization
of $\Bii$ in terms of Peetre-type maximal functions.
It also shows that the definition of $\TLii$ given in \Cref{sub:TLSpaces} coincides with the definition in \cite[Section 3]{bownik2007anisotropic}.

\begin{theorem} \label{thm:pqinfty}
  Let $A \in \mathrm{GL}(d, \mathbb{R})$ be expansive.  Assume that $\varphi \in \Schwartz(\R^d)$ has compact
  Fourier support and satisfies \eqref{eq:analyzing_support} and
  \eqref{eq:analyzing_positive}.

  Then, for all $q \in (0, \infty]$ and
  $\alpha \in \mathbb{R}$, we have the continuous embedding
  \begin{equation*}
     \TLi(A,\varphi) \subset \TLii(A,\varphi).
  \end{equation*}
  In addition, $\TLii(A,\varphi) = \Bii(A,\varphi)$.
\end{theorem}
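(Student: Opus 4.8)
The plan is to prove the two assertions of \Cref{thm:pqinfty} in sequence, reducing the second (the equality $\TLii = \Bii$) to the first combined with the maximal characterization in \Cref{thm:norm_equiv}.

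\textbf{Step 1: The embedding $\TLi \subset \TLii$ for all $q \in (0,\infty]$.} First I would observe that the case $q = \infty$ is trivial since the quasi-norms agree by definition, so assume $q < \infty$. The key point is that the inner $\ell^q$-sum controls a single term from below. Fix $f \in \TLi$. For any fixed $\ell \in \Z$, $k \in \Z^d$, and any single index $j_0 \ge -\ell$, monotonicity of the sum and Jensen's inequality (the latter needed when $q < 1$, to move from the $L^q$-average to the $L^1$-average) give
\begin{align*}
  \dashint_{A^\ell([0,1]^d+k)} |\det A|^{\alpha j_0} |(f \ast \varphi_{j_0})(x)| \, dx
  &\leq \bigg( \dashint_{A^\ell([0,1]^d+k)} \big( |\det A|^{\alpha j_0} |(f \ast \varphi_{j_0})(x)| \big)^{\min\{1,q\}} dx \bigg)^{1/\min\{1,q\}} \\
  &\leq \bigg( \dashint_{A^\ell([0,1]^d+k)} \sum_{j=-\ell}^\infty \big( |\det A|^{\alpha j} |(f \ast \varphi_j)(x)| \big)^q dx \bigg)^{1/q},
\end{align*}
where in the last step I use $\|(a_j)\|_{\ell^{\min\{1,q\}}} \le \|(a_j)\|_{\ell^q}$ when $q \ge 1$ and the trivial bound $a_{j_0}^q \le \sum_j a_j^q$ when $q < 1$. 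Taking the supremum over $\ell$, $k$, and $j_0 \ge -\ell$ yields $\|f\|_{\TLii} \lesssim \|f\|_{\TLi}$, which is the asserted continuous embedding.

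\textbf{Step 2: $\TLii = \Bii$.} Here I would invoke the Peetre-type characterization \eqref{eq:norm_equiv_infty} of \Cref{thm:norm_equiv} with $\beta > 1$: for any $f \in \SP$,
\[
  \| f \|_{\TLii} \asymp \sup_{\ell \in \Z, w \in \R^d} \sup_{j \in \Z, j \geq -\ell} \dashint_{A^\ell \Omega + w} |\det A|^{\alpha j} \DoubleStar{j} f(x) \, dx.
\]
For the inclusion $\Bii \subset \TLii$, note that $\DoubleStar{j} f(x) \le \DoubleStar{j} f$ is dominated, via \Cref{prop:peetre-estimate} or more directly, by considering that for $\beta$ large the Peetre maximal function is controlled by the $L^\infty$-norm of $f \ast \varphi_j$; more carefully, one has the pointwise bound $\DoubleStar{j} f(x) \le \|f \ast \varphi_j\|_{L^\infty} \cdot \sup_z (1+\rho_A(A^j z))^{-\beta} = \|f \ast \varphi_j\|_{L^\infty}$, so averaging and taking suprema gives $\|f\|_{\TLii} \lesssim \sup_j |\det A|^{\alpha j}\|f \ast \varphi_j\|_{L^\infty} = \|f\|_{\Bii}$. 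For the reverse inclusion $\TLii \subset \Bii$, the essential tool is \Cref{prop:peetre-estimate}: since $\DoubleStar{j}f(x) \ge |(f\ast\varphi_j)(x)|$ pointwise, and since $\DoubleStar{j}f$ has the sub-mean-value property, one can bound $\|f \ast \varphi_j\|_{L^\infty}$ by an average of $\DoubleStar{j}f$ over a suitable ball. Concretely, for $q=1$ and $\beta > 1$, \Cref{prop:peetre-estimate} gives $\DoubleStar{j}f(x_0) \lesssim |\det A|^j \int_{\R^d} |(f\ast\varphi_j)(y)| (1+\rho_A(A^j(x_0-y)))^{-\beta}\,dy$; splitting the integral over the annuli $\mathring{Q}_{m-j+1}(x_0)$ and using $\beta > 1$ to sum the geometric series bounds this by $\sup_m |\det A|^{-m}\int_{Q_{m-j+1}(x_0)}|(f\ast\varphi_j)(y)|\,dy = \sup_m \dashint_{Q_{m-j+1}(x_0)}|(f\ast\varphi_j)(y)|\,dy$ (up to a constant), and each such average is $\le |\det A|^{-\alpha j}\|f\|_{\TLii}$ by definition (taking $\ell = -(m-j+1)$, so $j \ge -\ell = m-j+1 \ge -\ell$ holds). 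Since $\DoubleStar{j}f(x_0) \ge |(f\ast\varphi_j)(x_0)|$ and $x_0$ is arbitrary, $\sup_j |\det A|^{\alpha j}\|f\ast\varphi_j\|_{L^\infty} \lesssim \|f\|_{\TLii}$, i.e., $\|f\|_{\Bii} \lesssim \|f\|_{\TLii}$.

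\textbf{Main obstacle.} The genuinely delicate point is the inclusion $\TLii \subset \Bii$: one must pass from averaged quantities (which is all $\|\cdot\|_{\TLii}$ controls) to the pointwise supremum $\|f\ast\varphi_j\|_{L^\infty}$, and this is exactly where the sub-mean-value property \Cref{prop:peetre-estimate} — together with the constraint $\beta > 1$ ensuring summability of the annular geometric series — does the real work; one must also be careful that the balls $Q_{m-j+1}(x_0)$ arising in the estimate are precisely of the admissible form $A^{\ell}\Omega + w$ appearing in the definition of $\TLii$, with the scale index satisfying the constraint $j \ge -\ell$, so that the defining supremum may legitimately be applied. The remaining claim — that the definition here matches Bownik's — then follows since Bownik defines $\TLii$ via \eqref{eq:FB}, which is exactly $\|\cdot\|_{\Bii}$.
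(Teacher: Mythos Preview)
Your Step~1 contains a genuine error for $q \in (0,1)$: the inequality $\dashint_Q g \le (\dashint_Q g^q)^{1/q}$ that you invoke is false in this range, since $t \mapsto t^q$ is concave and Jensen's inequality runs the other way (take $g = 2\cdot\Indicator_E$ with $\Lebesgue{E} = \tfrac12\Lebesgue{Q}$ to see $(\dashint g^q)^{1/q} = 2^{1-1/q} < 1 = \dashint g$). There is no purely elementary way to pass from control of $\big(\dashint_Q \sum_j g_j^q\big)^{1/q}$ to control of $\dashint_Q g_{j_0}$ when $q<1$; some analytic input specific to $g_j = |\det A|^{\alpha j}|f\ast\varphi_j|$ is required. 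The paper avoids this by not attempting a direct comparison: it first establishes $\TLii = \Bii$, and then proves $\|f\|_{\Bii} \lesssim \|f\|_{\TLi}$ via the discrete Peetre-maximal characterization (\Cref{rem:general_expansive}). The device driving both steps is that a bound on the \emph{average} of $\DoubleStar{-\ell}f$ over a ball $Q_\ell(w)$ forces the existence of a point $x_w \in Q_\ell(w)$ at which $\DoubleStar{-\ell}f(x_w)$ is controlled, and then the definition of the Peetre maximal function propagates this to a pointwise bound on $|(f\ast\varphi_{-\ell})(z)|$ for every $z \in Q_\ell(w)$. This good-point argument works uniformly in $q$ and replaces your failed Jensen step.

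Your Step~2 is essentially correct. There is a sign slip in the parenthetical check: the ball $Q_{m-j+1}(x_0)$ corresponds to $\ell = m-j+1$, not $-(m-j+1)$, and the admissibility constraint $j \ge -\ell$ then reads $m \ge -1$, which always holds for $m \ge 0$. Your route for $\TLii \subset \Bii$ --- applying \Cref{prop:peetre-estimate} with exponent $1$ and summing the annular contributions as a geometric series in $|\det A|^{-m(\beta-1)}$ --- is valid and slightly different from the paper's, which instead invokes the full maximal characterization and the good-point argument above rather than re-deriving the annular estimate. Note, incidentally, that running your annular argument with exponent $q$ in place of $1$ (so that $\beta q > 1$ is needed) and then using the trivial bound $\dashint_{Q_{\ell'}} |\det A|^{\alpha j q}|f\ast\varphi_j|^q \le \dashint_{Q_{\ell'}} \sum_{j' \ge -\ell'} (|\det A|^{\alpha j'}|f\ast\varphi_{j'}|)^q$ would yield $\|f\|_{\Bii} \lesssim \|f\|_{\TLi}$ directly, thereby repairing your Step~1.
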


\begin{proof}
  The inequality $\|f\|_{\TLii} \leq \|f\|_{\Bii}$ is immediate. To show that $\| f \|_{\Bii} \lesssim \| f \|_{\TLii}$,
  the norm equivalences of Theorem~\ref{thm:norm_equiv}
  will be used; see also \Cref{rem:general_expansive}.  For this, suppose
  $\beta >1$. Then, for all $\ell \in \Z$, $w \in \R^d$, we see that
  \begin{align*}
    \dashint_{Q_\ell(w)} |\det A|^{ - \alpha \ell}
    \DoubleStar{- \ell} f(x) \, dx
    & \leq \sup_{\ell \in \Z, w \in \R^d} \sup_{j \in \Z, j \geq -
      \ell} \bigg(
      \dashint_{Q_\ell(w)} |\det A|^{\alpha j}
      \DoubleStar{j} f(x) \, dx \bigg) \\
    & \lesssim \|f\|_{\TLii},
      \numberthis \label{eq:case-p=q=infty-eq1}
  \end{align*}
In particular, the inequality~\eqref{eq:case-p=q=infty-eq1}
  implies that, for every $\ell \in \Z$ and
  $w \in \R^d$ there exists $x_w \in Q_\ell(w)$ such that
  \begin{equation}
    \label{eq:case-p=q=infty-eq2}
    |\det A|^{ - \alpha \ell}
    \DoubleStar{- \ell} f(x_w) \lesssim \|f\|_{\TLii}.
  \end{equation}
  If $z \in Q_\ell(w)$, then
  $\rho_A\big(A^{-\ell}(z-x_w)\big) \lesssim
  \rho_A\big(A^{-\ell}(z-w)\big) +\rho_A\big(A^{-\ell}(w-x_w)\big)
  \lesssim 1$ with implicit constant depending only on $A$.  Hence, for
  all $z \in Q_\ell(w)$, it holds that
  \begin{equation}
    \label{eq:case-p=q=infty-eq3}
    \DoubleStar{- \ell} f(x_w) \geq  \frac{|(f \ast \varphi_{- \ell})(z)|}
    {\big( 1+ \rho_A\big(A^{-\ell}(z-x_w)\big)\big)^\beta} \gtrsim_{A, \beta} |(f \ast \varphi_{- \ell})(z)|.
  \end{equation}
  Since
  $\bigcup_{w \in \R^d} Q_\ell(w) = \R^d$ for fixed, but arbitrary,
  $\ell \in \Z$, we see by combining \eqref{eq:case-p=q=infty-eq2} and
  \eqref{eq:case-p=q=infty-eq3} that
  \begin{equation*}
   |\det A|^{ - \alpha \ell}  |(f \ast \varphi_{- \ell})(z)| \lesssim  \|f\|_{\TLii}
  \end{equation*}
  for all $z \in \R^d$ and $\ell \in \Z$, which shows
  $\TLii(A,\varphi) = \Bii(A,\varphi)$ with equivalent norms.

  An analogous argument using \eqref{eq:norm_equiv} gives
  $\|f\|_{\Bii} \lesssim \|f\|_{\TLi}$, which completes the proof.
\end{proof}

\Cref{thm:pqinfty} is
an extension of \cite[Theorem 3]{bui2000characterization} to the
anisotropic setting.

\section{Wavelet coefficient decay and Peetre-type spaces}
Throughout this section, let  $A \in \mathrm{GL}(d, \mathbb{R})$ be an exponential matrix. Define
the associated semi-direct product
$
  G_A = \mathbb{R}^d \rtimes_A \mathbb{R}
      = \{
          (x, s)
          :
          x \in \mathbb{R}^d, s \in \mathbb{R}
        \}
$
with group operations
\begin{align} \label{eq:SemidirectProductDefinition}
(x,s) (y,t) = (x + A^sy, s+t) \quad \text{and} \quad
    (x,s)^{-1} = (-A^{-s} x, -s).
\end{align}
Left Haar measure on $G_A$ is given
by $d \mu_{G_A} (x,s) = |\det A|^{-s} ds \, dx$ and the modular
function on $G_A$ is $\Delta_{G_A} (x,s) = |\det A|^{-s}$.  To ease notation,
we will often write $\mu := \mu_{G_A}$.

\subsection{Wavelet transforms}
The group $G_A = \mathbb{R}^d \rtimes_A \mathbb{R}$ acts unitarily on $L^2 (\mathbb{R}^d)$ by means of the \emph{quasi-regular representation} $\pi$, defined by
\[
\pi(x,s) f = |\det A|^{-s/2} f(A^{-s} (\cdot - x)), \quad (x,s) \in G_A.
\]
For a fixed vector $\psi \in L^2 (\mathbb{R}^d) \setminus \{0\}$, the associated wavelet transform $W_{\psi} : L^2 (\mathbb{R}^d) \to L^{\infty} (G_A)$ is defined through the coefficients
\[
W_{\psi} f (x,s) = \langle f, \pi(x,s) \psi \rangle, \quad f \in L^2 (\mathbb{R}^d).
\]
The vector $\psi$ is called \emph{admissible} if $W_{\psi} : L^2 (\mathbb{R}^d) \to L^2 (G_A)$ is an isometry.

The following lemma is a special case of \cite[Theorem 1]{fuehr2002continuous} and \cite[Theorem 1.1]{laugesen2002characterization}.

\begin{lemma}[\cite{laugesen2002characterization, fuehr2002continuous}]
  Let $\psi \in L^2 (\mathbb{R}^d)$. Then $\psi$ is admissible if and only if
  \[
  \int_{\mathbb{R}} | \widehat{\psi} ((A^*)^s \xi) |^2 \; ds = 1
  \]
  for a.e. $\xi \in \mathbb{R}^d$.
\end{lemma}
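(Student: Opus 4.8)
The plan is to recognize this as a standard orthogonality/Plancherel argument on the ``ax+b''-type group $G_A$, using the Fourier transform to turn dilations into translations along the one-parameter flow $(A^*)^s$. First I would fix $\psi \in L^2(\mathbb{R}^d)$ and, for $f \in L^2(\mathbb{R}^d)$, compute $W_\psi f(x,s) = \langle f, \pi(x,s)\psi\rangle$ on the Fourier side: by Parseval and the intertwining property of the Fourier transform with dilations and translations, $\widehat{\pi(x,s)\psi}(\xi) = |\det A|^{s/2} e^{-2\pi i \langle x, (A^*)^s \xi\rangle}\, \widehat{\psi}((A^*)^s\xi)$, so that for each fixed $s$ the map $x \mapsto W_\psi f(x,s)$ is (up to the factor $|\det A|^{s/2}$) the inverse Fourier transform of $\xi \mapsto \widehat{f}(\xi)\,\overline{\widehat{\psi}((A^*)^s\xi)}$. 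Next I would invoke Plancherel on $\mathbb{R}^d$ in the $x$-variable to get
\[
  \int_{\mathbb{R}^d} |W_\psi f(x,s)|^2 \, dx
  = |\det A|^{s} \int_{\mathbb{R}^d} |\widehat{f}(\xi)|^2 \, |\widehat{\psi}((A^*)^s\xi)|^2 \, d\xi.
\]

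Then I would integrate this identity against the Haar measure factor, i.e.\ multiply by $|\det A|^{-s}$ and integrate over $s \in \mathbb{R}$, and apply Tonelli's theorem to swap the order of integration (everything is nonnegative, so this is unconditional):
\[
  \| W_\psi f \|_{L^2(G_A)}^2
  = \int_{\mathbb{R}} \int_{\mathbb{R}^d} |W_\psi f(x,s)|^2 \, |\det A|^{-s} \, dx \, ds
  = \int_{\mathbb{R}^d} |\widehat{f}(\xi)|^2 \left( \int_{\mathbb{R}} |\widehat{\psi}((A^*)^s\xi)|^2 \, ds \right) d\xi.
\]
The function $\Psi(\xi) := \int_{\mathbb{R}} |\widehat{\psi}((A^*)^s\xi)|^2 \, ds$ is measurable, nonnegative, and — because $A^*$ is also expansive, so $(A^*)^t$ maps $\mathbb{R}^d\setminus\{0\}$ onto itself and $\Psi((A^*)^t\xi) = \Psi(\xi)$ for all $t$ (a substitution $s \mapsto s-t$) — it is invariant under the flow. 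Admissibility, i.e.\ $\|W_\psi f\|_{L^2(G_A)} = \|f\|_{L^2(\mathbb{R}^d)} = \|\widehat{f}\|_{L^2(\mathbb{R}^d)}$ for every $f$, is then equivalent to $\int_{\mathbb{R}^d} |\widehat{f}(\xi)|^2 (\Psi(\xi) - 1)\, d\xi = 0$ for all $f \in L^2$, which by a standard density/localization argument (testing against $\widehat{f}$ supported on arbitrary measurable sets of finite measure) forces $\Psi(\xi) = 1$ for a.e.\ $\xi$. This is exactly the stated condition.

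The one point requiring mild care — and the closest thing to an obstacle — is the justification that $\pi$ is a well-defined unitary representation and that the formal Fourier-side manipulations (in particular writing $W_\psi f(\cdot,s)$ as an inverse Fourier transform and applying Plancherel pointwise in $s$) are legitimate for all $L^2$ functions rather than just a dense subclass; the clean way around this is to first carry out the computation for $f,\psi$ in a dense subspace such as $L^1 \cap L^2$ with $\widehat{\psi}$ nice, and then pass to the limit, or simply to cite \cite{fuehr2002continuous, laugesen2002characterization} as the excerpt already does since the lemma is explicitly stated to be a special case of those results. Since the statement is presented as a known special case, the appropriate write-up is a brief indication of the Plancherel computation above followed by a reference to \cite{laugesen2002characterization, fuehr2002continuous} for the full details, rather than a from-scratch proof.
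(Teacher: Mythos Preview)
The paper does not give its own proof of this lemma; it is stated as a special case of \cite[Theorem 1]{fuehr2002continuous} and \cite[Theorem 1.1]{laugesen2002characterization} and simply cited. Your proposal therefore goes beyond what the paper does, and your Plancherel computation is the standard and correct argument. One minor slip: the phase in your formula for $\widehat{\pi(x,s)\psi}(\xi)$ should be $e^{-2\pi i \langle x,\xi\rangle}$, not $e^{-2\pi i \langle x,(A^*)^s\xi\rangle}$ (the translation by $x$ acts before the dilation is undone on the Fourier side), but this is immaterial since your subsequent identification of $W_\psi f(\cdot,s)$ as $|\det A|^{s/2}$ times the inverse Fourier transform of $\widehat{f}\,\overline{\widehat{\psi}((A^*)^s\cdot)}$ is correct and is what drives the Plancherel step.
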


The significance of an admissible vector $\psi$ is that $W_{\psi}^* W_{\psi} = I_{L^2 (\mathbb{R}^d)}$, and hence that
the weak-sense integral formula
\[
f = \int_{G_A} W_{\psi} f (g) \pi(g) \psi \; d\mu_{G_A} (g)
\]
holds for every $f \in L^2 (\mathbb{R}^d)$.
This, combined with fact that $W_{\psi} : L^2 (\mathbb{R}^d) \to L^2 (G_A)$ intertwines the action of $\pi$ and left translation $L_h F = F(h^{-1} \cdot)$ on $L^2 (G_A)$, also yields that
\begin{align} \label{eq:reproL2}
W_{\varphi} f = W_{\psi} f \ast W_{\varphi} \psi,
\end{align}
for all $f, \varphi \in L^2 (\mathbb{R}^d)$.
The identity \eqref{eq:reproL2} will be referred to as a \emph{reproducing formula}.

Henceforth, it will always be assumed that $A \in \mathrm{GL}(d, \mathbb{R})$ is both exponential and expansive. This is essential for the existence of admissible vectors $\psi \in \mathcal{S} (\mathbb{R}^d)$ with compact Fourier support, as the following result shows.

\begin{lemma}[\cite{currey2016integrable, groechenig1991describing, schulz2004projections, bownik2003anisotropic}] \label{lem:admissible_expansive}
  Let $A \in \mathrm{GL}(d, \mathbb{R})$ be an exponential matrix. The following assertions are equivalent:
  \begin{enumerate}
      \item[(i)] The matrix $A$ or its inverse $A^{-1}$ is expansive.
      \item[(ii)] There exists an admissible $\psi \in \mathcal{S} (\mathbb{R}^d)$ with $\widehat{\psi} \in C_c^{\infty} (\mathbb{R}^d)$.
  \end{enumerate}
  In addition, if $A$ is an expansive matrix, then the admissible
  vector $\psi \in L^2 (\mathbb{R}^d)$ can be chosen such that
  $\widehat{\psi} \in C_c^{\infty} (\mathbb{R}^d \setminus \{0\})$
   satisfies condition~\eqref{eq:analyzing_positive}.
\end{lemma}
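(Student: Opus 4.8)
The plan is to build on the admissibility criterion recorded just above, namely that $\psi \in L^2(\mathbb{R}^d)$ is admissible if and only if $\int_{\mathbb{R}} |\widehat{\psi}((A^*)^s\xi)|^2 \, ds = 1$ for almost every $\xi \in \mathbb{R}^d$. I would first note that this identity is invariant under the substitution $s \mapsto -s$, so a vector is admissible for $G_A$ exactly when it is admissible for $G_{A^{-1}}$; consequently, for the implication (i)$\Rightarrow$(ii) there is no loss in assuming that $A$ itself (hence also $A^*$) is expansive. The core of the lemma — and the part actually needed later — is the construction producing (ii) together with the stated refinement, so the bulk of the proof goes there.

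For the construction, assume $A$ (hence $A^*$) expansive and exponential, and pick a \emph{smooth homogeneous gauge} $N$ for $A^*$, i.e.\ a positive, proper $N \in C^\infty(\mathbb{R}^d \setminus \{0\})$ with $N((A^*)^s\xi) = |\det A|^s N(\xi)$ for all $s \in \mathbb{R}$ (such gauges exist since $A^*$ is exponential and expansive; cf.\ the constructions of homogeneous quasi-norms in \Cref{lem:QuasiNorm} and the references there). Fix a non-negative $\eta \in C_c^\infty(\mathbb{R}^d \setminus \{0\})$ with $\eta \equiv 1$ on the compact shell $S := \{\xi : 1 \le N(\xi) \le |\det A|^2\}$, and set $G(\xi) := \int_{\mathbb{R}} \eta((A^*)^s\xi)^2 \, ds$ for $\xi \ne 0$. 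Since $s \mapsto N((A^*)^s\xi) = |\det A|^s N(\xi)$ is an increasing homeomorphism of $\mathbb{R}$ onto $(0,\infty)$, every orbit meets $S$ in an $s$-interval of length $2$, whence $G \ge 2$; and since $\supp \eta$ is compact with $N$ bounded away from $0$ and $\infty$ on it, the $s$-integration effectively runs over a bounded interval depending locally uniformly on $\xi$, so $G$ is finite and $C^\infty$ on $\mathbb{R}^d \setminus \{0\}$. A change of variables gives $G((A^*)^t\xi) = G(\xi)$. The candidate is $\widehat\psi := \eta/\sqrt{G}$ on $\mathbb{R}^d \setminus \{0\}$, extended by $0$ at the origin; then $\widehat\psi \in C_c^\infty(\mathbb{R}^d \setminus \{0\})$, so $\psi \in \Schwartz(\mathbb{R}^d)$, and the $\mathbb{R}$-invariance of $G$ yields $\int_{\mathbb{R}} |\widehat\psi((A^*)^s\xi)|^2 \, ds = G(\xi)^{-1}\int_{\mathbb{R}} \eta((A^*)^s\xi)^2 \, ds = 1$, so $\psi$ is admissible. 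Finally, for every $\xi \ne 0$ there is an integer $j$ with $|\det A|^j N(\xi) \in (1,|\det A|^2)$, so $(A^*)^j\xi \in S$ and $\widehat\psi((A^*)^j\xi) \ge \|G\|_{L^\infty}^{-1/2} > 0$, which is exactly \eqref{eq:analyzing_positive}. This establishes (i)$\Rightarrow$(ii) and the refinement at once; if only $A^{-1}$ is expansive, one applies the construction to $A^{-1}$, which produces the same class of admissible vectors.

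For the converse (ii)$\Rightarrow$(i), I would argue the contrapositive: if $A$ is exponential but neither $A$ nor $A^{-1}$ is expansive, then no admissible $\psi \in L^2(\mathbb{R}^d)$ exists at all, so certainly none with $\widehat\psi \in C_c^\infty(\mathbb{R}^d)$. The mechanism is the dual orbit structure of the one-parameter group $\{(A^*)^s\}$: if $A$ has an eigenvalue on the unit circle, generic orbits either fail to be locally closed or exhibit recurrence along the corresponding (generalized) eigendirections, which is incompatible with $\int_{\mathbb{R}}|\widehat\psi((A^*)^s\xi)|^2\,ds = 1$ for an $L^2$ function; if all eigenvalues avoid the unit circle but $A$ is of mixed type, the dual action carries a non-trivial $\mathbb{R}$-invariant moment, and disintegrating Lebesgue measure over the orbit space — as one checks explicitly for $\diag(\lambda,\lambda^{-1})$ with $\lambda>1$ — forces $\|\widehat\psi\|_{L^2}^2 = \infty$ for any $\psi$ meeting the admissibility identity. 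This is precisely the content of the square-integrability characterizations of Currey, Gröchenig, Schulz and Bownik \cite{currey2016integrable, groechenig1991describing, schulz2004projections, bownik2003anisotropic}, which I would invoke for the complete case analysis rather than reprove.

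The main obstacle is exactly this converse. The expansive direction reduces to the transparent ``annular tiling'' construction above, but excluding admissible vectors in the mixed-type and unit-modulus cases requires treating every Jordan-block configuration of $A$ (in particular distinguishing semisimple from non-semisimple blocks with unit-modulus eigenvalues), which is why leaning on the cited literature is the sensible route.
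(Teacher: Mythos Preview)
The paper does not prove this lemma at all; it is stated purely as a citation result from the references listed in its header, with no argument supplied in the text. Your proposal therefore goes well beyond what the paper does: you give an explicit construction for (i)$\Rightarrow$(ii) together with the refinement, and you sketch the obstruction for (ii)$\Rightarrow$(i), whereas the paper simply imports the statement wholesale.

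Your construction for the forward direction is essentially the standard one and is correct in spirit. One point needs adjusting: the step norm $\rho_A$ from \Cref{lem:QuasiNorm} that you cite is neither smooth nor homogeneous under the \emph{continuous} dilation $s \mapsto (A^*)^s$---it only satisfies $\rho_{A^*}((A^*)^j\xi) = |\det A|^j \rho_{A^*}(\xi)$ for integer $j$---so it cannot serve as your gauge $N$. What you actually need is a continuous $N : \mathbb{R}^d \setminus \{0\} \to (0,\infty)$ with $N((A^*)^s\xi) = |\det A|^s N(\xi)$ for all real $s$; such a gauge exists (e.g.\ set $N(\xi) := |\det A|^{t(\xi)}$ where $t(\xi)$ is the unique real number with $(A^*)^{-t(\xi)}\xi \in \partial\Omega_{A^*}$, which is well-defined since $A^*$ is expansive), and continuity is all your argument requires: the smoothness of $G$ comes from $\eta \in C_c^\infty$ and differentiation under a locally-uniformly-bounded integral, not from any regularity of $N$. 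With that correction the construction goes through and delivers condition~\eqref{eq:analyzing_positive} as you claim.

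For the converse (ii)$\Rightarrow$(i), your plan to invoke the square-integrability characterizations from \cite{currey2016integrable, groechenig1991describing, schulz2004projections, bownik2003anisotropic} for the full case analysis is exactly what the paper does for the entire lemma, so there is nothing to compare.
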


We will also need to use wavelet transforms of distributions. For this, consider the subspace of $\mathcal{S}(\mathbb{R}^d)$ given by
  \[
  \mathcal{S}_0 (\mathbb{R}^d) := \bigg\{ \varphi \in \mathcal{S} (\mathbb{R}^d) : \int_{\mathbb{R}^d} \varphi (x) x^{\alpha} \; dx = 0, \quad \forall \alpha \in \mathbb{N}_0^d \bigg\},
  \]
and equip it with the subspace topology of $\mathcal{S}(\mathbb{R}^d)$. Its topological dual space will be denoted by $\mathcal{S}_0' (\mathbb{R}^d)$ and will often be identified with $\mathcal{S}'(\mathbb{R}^d) / \mathcal{P} (\mathbb{R}^d)$, see, e.g., \cite[Proposition 1.1.3]{grafakos2014modern}.
The dual bracket between $\mathcal{S}_0$ and $\mathcal{S}_0' (\mathbb{R}^d)$ is denoted by
\[
\langle \cdot , \cdot \rangle : \mathcal{S}_0' (\mathbb{R}^d) \times \mathcal{S}_0 (\mathbb{R}^d), \quad \langle f, h \rangle := f(\overline{h}).
\]
Note that this pairing is conjugate-linear in the second variable.

For a fixed $\psi \in \mathcal{S}_0 (\mathbb{R}^d) \setminus \{0\}$, the extended wavelet transform of $f \in \mathcal{S}'_0 (\mathbb{R}^d)$ is defined as
\[
W_{\psi} f (x,s) = \langle f, \pi(x,s) \psi \rangle, \quad (x,s) \in G_A.
\]
By the continuity of the map $(x,s) \mapsto \pi(x,s) \varphi$ from $\mathbb{R}^d \times \mathbb{R}$ into $\mathcal{S}(\mathbb{R}^d)$, the transform $W_{\psi} : \mathcal{S}_0' (\mathbb{R}^d) \to C(G_A)$ is well-defined.

The reproducing formula \eqref{eq:reproL2} can be naturally extended to $\mathcal{S}'_0 (\mathbb{R}^d)$. See  \cite[Lemma 4.7 and Lemma 4.8]{KvVV2021anisotropic} for a proof of the following result.

\begin{lemma}[\cite{KvVV2021anisotropic}]
  Let $\psi \in \mathcal{S}_0 (\mathbb{R}^d)$ be an admissible vector. Then, for arbitrary $f \in \mathcal{S}'_0 (\mathbb{R}^d)$ and $\varphi \in \mathcal{S}_0 (\mathbb{R}^d)$,
  \[
  \langle f, \varphi \rangle = \int_{G_A} W_{\psi} f(g) \overline{W_{\psi} \varphi (g) } \; d\mu_{G_A} (g).
  \]
  In particular, the identity $W_{\varphi} f = W_{\psi} f \ast W_{\varphi} \psi$ holds.
\end{lemma}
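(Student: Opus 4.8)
The statement to prove is the extended reproducing formula: for an admissible $\psi \in \mathcal{S}_0(\mathbb{R}^d)$, arbitrary $f \in \mathcal{S}_0'(\mathbb{R}^d)$ and $\varphi \in \mathcal{S}_0(\mathbb{R}^d)$, we have $\langle f, \varphi \rangle = \int_{G_A} W_\psi f(g) \overline{W_\psi \varphi(g)}\, d\mu_{G_A}(g)$, and consequently $W_\varphi f = W_\psi f \ast W_\varphi \psi$.

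The plan is to reduce the distributional statement to the $L^2$-level reproducing formula via a density/duality argument, following the reference \cite{KvVV2021anisotropic}. First I would recall that since $\psi$ is admissible, $W_\psi^* W_\psi = I$ on $L^2(\mathbb{R}^d)$, so the weak-sense Calderón formula $h = \int_{G_A} W_\psi h(g)\, \pi(g)\psi\, d\mu_{G_A}(g)$ holds for every $h \in L^2(\mathbb{R}^d)$; pairing this with $\varphi \in \mathcal{S}_0 \subset L^2$ gives $\langle h, \varphi \rangle = \int_{G_A} W_\psi h(g) \overline{W_\psi \varphi(g)}\, d\mu_{G_A}(g)$ for $h \in L^2$. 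The task is then to upgrade $h \in L^2(\mathbb{R}^d)$ to $f \in \mathcal{S}_0'(\mathbb{R}^d)$. The key point is that the integral on the right-hand side makes sense for $f \in \mathcal{S}_0'$: one shows that $g \mapsto W_\psi f(g)$ is continuous (already noted in the excerpt) and of at most polynomial growth in the appropriate sense on $G_A$, while $g \mapsto W_\psi \varphi(g)$ decays rapidly enough — because $\varphi \in \mathcal{S}_0$ and $\psi \in \mathcal{S}_0$, the cross-wavelet transform $W_\psi \varphi$ lies in a weighted $L^1$ (in fact Wiener amalgam) space on $G_A$, controlling both the $x$-decay at each scale and the decay as $s \to \pm\infty$ coming from the vanishing moments of both functions. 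This pairing $\int_{G_A} W_\psi f \cdot \overline{W_\psi \varphi}\, d\mu$ is thus absolutely convergent and defines a separately continuous bilinear form in $(f,\varphi)$.

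The second step is to verify the identity by testing against a dense subspace. One approach: fix $\varphi \in \mathcal{S}_0$ and regard both sides as continuous linear functionals of $f \in \mathcal{S}_0'$ with respect to a suitable topology in which $L^2 \cap \mathcal{S}_0'$ (or better, $\mathcal{S}_0$ itself) is dense; since the two functionals agree on the dense subset by the $L^2$-computation above, they agree everywhere. Alternatively, and perhaps more cleanly, one can argue directly: given $f \in \mathcal{S}_0'$, use the rapid decay of $W_\psi \varphi$ to justify interchanging the dual pairing $\langle f, \cdot\rangle$ with the integral $\int_{G_A} \overline{W_\psi\varphi(g)}\, \pi(g)\psi\, d\mu(g)$ — this $G_A$-integral converges absolutely in $\mathcal{S}_0(\mathbb{R}^d)$ (as a Bochner integral in the Fréchet space $\mathcal{S}_0$, again because $W_\psi\varphi \in L^1_w(G_A)$ and $g \mapsto \pi(g)\psi$ is continuous with controlled growth of Schwartz seminorms) and equals $\varphi$ by the $L^2$ Calderón formula applied to the Schwartz function $\varphi$. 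Then $\langle f, \varphi\rangle = \langle f, \int_{G_A} \overline{W_\psi\varphi(g)}\,\pi(g)\psi\, d\mu(g)\rangle = \int_{G_A} W_\psi\varphi(g)\,\langle f, \pi(g)\psi\rangle^{\overline{\phantom{x}}}$... — more carefully, pulling the conjugate-linear pairing through yields $\int_{G_A} W_\psi f(g)\,\overline{W_\psi\varphi(g)}\, d\mu(g)$, which is exactly the claim.

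The "in particular" statement $W_\varphi f = W_\psi f \ast W_\varphi \psi$ then follows by applying the identity just proved with $\varphi$ replaced by $\pi(h)\varphi$ for $h \in G_A$: the left side becomes $\langle f, \pi(h)\varphi\rangle = W_\varphi f(h)$, and using the covariance $W_\psi(\pi(h)\varphi)(g) = W_\psi\varphi(h^{-1}g)$ together with a change of variables the right side becomes $\int_{G_A} W_\psi f(g)\,\overline{W_\psi\varphi(h^{-1}g)}\, d\mu(g) = (W_\psi f \ast W_\varphi\psi)(h)$ — here one uses unimodularity-type bookkeeping with the modular function $\Delta_{G_A}$ and the relation between $W_\varphi\psi$ and $(W_\psi\varphi)^\vee$. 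I expect the main obstacle to be the careful justification of the interchange of the $\mathcal{S}_0'$–$\mathcal{S}_0$ pairing with the $G_A$-integral: one needs the vector-valued integral $\int_{G_A} \overline{W_\psi\varphi(g)}\,\pi(g)\psi\, d\mu(g)$ to converge in the topology of $\mathcal{S}_0(\mathbb{R}^d)$, which requires quantitative control of $p_{M,N}(\pi(g)\psi)$ — polynomial in $x$ and exponential in $|s|$ — against the decay of $W_\psi\varphi$, precisely the kind of Wiener-amalgam estimate that the rest of the paper develops; since the statement is attributed to \cite{KvVV2021anisotropic}, I would simply cite that reference for this technical core and present the duality manipulation above as the proof skeleton.
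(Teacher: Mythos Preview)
The paper does not give its own proof of this lemma: it simply cites \cite[Lemma~4.7 and Lemma~4.8]{KvVV2021anisotropic} and moves on. So there is nothing in the present paper to compare your argument against line by line.

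That said, your outline is sound and is essentially the standard route taken in the cited reference. The core idea---show that the vector-valued integral $\int_{G_A} \overline{W_\psi\varphi(g)}\,\pi(g)\psi\, d\mu_{G_A}(g)$ converges in the Fr\'echet topology of $\mathcal{S}_0(\mathbb{R}^d)$ and equals $\varphi$, then pull the tempered distribution $f$ through---is exactly right, and you have correctly located the only genuine work: controlling the Schwartz seminorms $p_{M,N}(\pi(x,s)\psi)$ (polynomial in $\rho_A(x)$, exponential in $|s|$) against the decay of $W_\psi\varphi$, which comes from both functions lying in $\mathcal{S}_0$.

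One small cleanup in your ``in particular'' step: no modular-function bookkeeping is actually needed. Directly,
\[
  W_\varphi\psi(g^{-1}h)
  = \langle \psi, \pi(g^{-1}h)\varphi\rangle
  = \langle \pi(g)\psi, \pi(h)\varphi\rangle
  = \overline{W_\psi(\pi(h)\varphi)(g)},
\]
so $(W_\psi f \ast W_\varphi\psi)(h) = \int_{G_A} W_\psi f(g)\,\overline{W_\psi(\pi(h)\varphi)(g)}\, d\mu_{G_A}(g) = \langle f, \pi(h)\varphi\rangle = W_\varphi f(h)$, with left Haar measure throughout and no appearance of $\Delta_{G_A}$.
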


\subsection{Peetre-type spaces}
As in \cite{KvVV2021anisotropic}, we define an auxiliary class of Peetre-type spaces $\PTi$ on the semi-direct product $G_A = \mathbb{R}^d \rtimes \mathbb{R}$. These spaces are an essential ingredient for identifying Triebel-Lizorkin spaces with associated coorbit spaces \cite{velthoven2022quasi, feichtinger1989banach}.

In contrast to the spaces $\PT$ defined in \cite[Definition 5.1]{KvVV2021anisotropic} for $p < \infty$, the spaces $\PTi$ will only be defined through averages over small scales.

\begin{definition} \label{def:PTi}
  Let $A \in \mathrm{GL}(d, \mathbb{R})$ be expansive and exponential and let $\Omega = \Omega_A$ be an associated ellipsoid as provided by \Cref{lem:QuasiNorm}. For $\ell \in \Z$ and $w \in \R^d$, set $Q_{\ell} (w) := A^{\ell} \Omega + w$.

  For $\alpha \in \mathbb{R}, \beta > 0$, and $q \in (0, \infty)$, the
  Peetre-type space $\PTi (G_A)$  is
  defined as the space of all (equivalence classes of a.e.~equal)
  measurable $F : G_A \to \mathbb{C}$ such that
  \[
    \| F \|_{\PTi} := \sup_{\ell \in \Z, w \in \R^d}  \bigg(
    \dashint_{Q_{\ell} (w)} \int_{-\infty}^\ell \bigg[ |\det
    A|^{\alpha s} \esssup_{z \in \mathbb{R}^d} \frac{|F(x+z,s)|} {(1 +
      \rho_A (A^{-s} z))^{\beta}} \bigg]^q \frac{ds \, dx}{|\det A|^{s}}
    \bigg)^{1/q} < \infty.
  \]
  For $q = \infty$, the space $\PTii (G_A)$ consists of all measurable
  $F : G_A \to \mathbb{C}$ satisfying
  \[
    \| F \|_{\PTii} := \sup_{\ell \in \Z, w \in \R^d} \sup_{s \in \R,
      s \leq \ell} \bigg( \dashint_{Q_{\ell} (w)} \bigg[ |\det
    A|^{\alpha s} \esssup_{z \in \mathbb{R}^d} \frac{|F(x+z,s)|} {(1 +
      \rho_A (A^{-s} z))^{\beta}} \bigg] \, dx \bigg) < \infty.
  \]
\end{definition}

The following lemma collects some basic properties of $\PTi(G_A)$ and
gives explicit estimates for the operator norm of left and right
translation. The estimates involve the following weight function
\[
v : G_A \to [1, \infty), \quad v(y,t) = \sup_{(z, u) \in G_A} \frac{1 + \rho_A (A^{-u} z)}{1 +
    \rho_A (A^{-u} A^t z - y)}.
\]
The function $v$ is measurable and submultiplicative by \cite[Lemma 5.2]{KvVV2021anisotropic}.

\begin{lemma}\label{lem:TranslationNormBounds}
  Let $\alpha \in \mathbb{R}, \beta > 0$, and $q \in (0, \infty]$.
  Then the Peetre-type space $\PTi(G_A)$ is a solid quasi-Banach
  function space (Banach function space if $q \geq 1$).  Moreover, the space $\PTi(G_A)$ is left- and right-translation invariant and there exists $N = N(A, \Omega) \in \mathbb{N}$ such that,
  for all $(y, t) \in G_A$, the operator norms can be bounded by
  \begin{align*}
    \vertiii{L_{(y,t)}}
    \leq |\det A|^{ t (\alpha-1/q) + (N+1)/q },\quad
    \vertiii{R_{(y,t)}}
    \leq \begin{cases}
      |\det A|^{-t(\alpha-2/q) + 1/q} (v(y,t))^{\beta} & \text{if } t > 0 ,  \\[0.1cm]
      |\det A|^{-t(\alpha-1/q)}  (v(y,t))^{\beta} & \text{if } t \leq 0,
       \end{cases}
  \end{align*}
  if $q < \infty$, and
  \begin{align*}
    \vertiii{L_{(y,t)}}
    \leq |\det A|^{ t \alpha + N+1 },
    \quad
    \vertiii{R_{(y,t)}}
    \leq \begin{cases}
        |\det A|^{-t(\alpha-1) + 1}  (v(y,t))^{\beta}      & \text{if } t > 0 ,  \\[0.1cm]
        |\det A|^{- t \alpha } (v(y,t))^{\beta} & \text{if } t \leq 0,
       \end{cases}
  \end{align*}
  otherwise. Here, the operator norm is written $\vertiii{\cdot} :=\| \cdot \|_{\PTi \to \PTi}$.
\end{lemma}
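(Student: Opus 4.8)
The plan is to verify the three asserted properties of $\PTi(G_A)$ in turn: the solid quasi-Banach function space structure, translation invariance, and the explicit operator-norm bounds.

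\emph{Step 1: Solidity and completeness.} I would first observe that the defining quantity $\| \cdot \|_{\PTi}$ is an $L^q$-type aggregate of the pointwise-in-$s$ quantity $|\det A|^{\alpha s} \esssup_{z} |F(x+z,s)| (1 + \rho_A(A^{-s}z))^{-\beta}$. Since this inner quantity is monotone in $|F|$, solidity is immediate: if $|F_1| \leq |F_2|$ a.e., then $\| F_1 \|_{\PTi} \leq \| F_2 \|_{\PTi}$. The quasi-triangle inequality (triangle inequality when $q \geq 1$) follows from the corresponding inequality for $\esssup$ in $z$, then for the $L^q$-norm in $(x,s)$. Completeness follows from solidity together with a Fatou-type argument: a Cauchy sequence in $\PTi$ has a subsequence converging a.e., and solidity lets one pass the quasi-norm through the pointwise limit. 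This is all routine and parallels \cite[Section 5]{KvVV2021anisotropic}.

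\emph{Step 2: Translation invariance and the operator-norm bounds.} This is the substantive part. For left translation $L_{(y,t)} F = F((y,t)^{-1} \cdot)$, using $(y,t)^{-1}(x,s) = (A^{-t}(x-y), s-t)$, I would substitute into the defining integral and perform the change of variables $s \mapsto s + t$ in the $s$-integral and $x \mapsto A^t x + y$ in the $x$-integral (which maps $Q_\ell(w)$ to a translate of $Q_{\ell + t'}(\cdot)$ for the appropriate integer shift of $\ell$ by roughly $t$, using \Cref{lem:QuasiNorm} to absorb the non-integer part into the constant $N$). The Jacobian factors $|\det A|^{t}$ from the $x$-substitution, the shift in the weight $|\det A|^{\alpha s}$, and the measure factor $|\det A|^{-s}$ combine to give the claimed power $|\det A|^{t(\alpha - 1/q) + (N+1)/q}$; the inner $\esssup_z$ term is invariant because the dilation structure $A^{-s}z$ transforms compatibly (here one uses that $\rho_A(A^{-s}z)$ depends only on the product, so the $z$-supremum is genuinely translation-covariant up to the bookkeeping already accounted for). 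For right translation $R_{(y,t)}F = F(\cdot \, (y,t))$, with $(x,s)(y,t) = (x + A^s y, s+t)$, I would again substitute $s \mapsto s - t$ and $x \mapsto x - A^{s}y$; the key new phenomenon is that the $z$-variable in the Peetre weight gets shifted in a way that is not exactly covariant, producing precisely the factor $v(y,t)^\beta$ via the definition of $v$, while the split into $t > 0$ and $t \leq 0$ arises because the shift in $\ell$ (from $Q_\ell(w)$ to a larger or smaller cube) contributes differently depending on the sign of $t$; one uses $Q_\ell \subset Q_{\ell + m}$ for $m \geq 0$ to bound the average in the unfavorable direction, which is where the extra factor $|\det A|^{2/q}$ versus $|\det A|^{1/q}$ (and the $+1$ in the $q = \infty$ case) comes from. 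The $q = \infty$ cases follow by the same substitutions with $\esssup$ replacing the $L^q$-aggregate.

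\emph{Expected main obstacle.} The hard part will be the right-translation bound: tracking how the change of variables in $(x,s)$ interacts with the inner Peetre supremum over $z$, showing that the residual non-covariance is controlled exactly by $v(y,t)^\beta$ (using its definition as a supremum over all $(z,u) \in G_A$), and correctly handling the asymmetry in the cube inclusions $Q_\ell(w) \subset Q_{\ell + N(t)}(w')$ that produces the case distinction and the precise exponents. Getting the integer shift $N$ to be uniform in $(y,t)$ — it should depend only on $A$ and $\Omega$ through the quasi-norm constant — requires care but follows the template of \cite[Lemma 5.4]{KvVV2021anisotropic}. The left-translation bound and the solidity/completeness claims are comparatively straightforward.
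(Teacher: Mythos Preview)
Your proposal is correct and follows essentially the same approach as the paper's proof. One small correction on the right translation: the shift $A^s y$ should be absorbed into the $z$-variable (the esssup variable), setting $\widetilde{z} = z + A^s y$, rather than into $x$; an $x$-substitution would make the outer integration domain $Q_\ell(w)$ depend on the inner variable $s$, which does not work. With the $\widetilde{z}$-substitution the numerator becomes $|F(x+\widetilde{z}, s+t)|$ while the denominator becomes $(1+\rho_A(A^{t-\widetilde{s}}\widetilde{z} - y))^{\beta}$ (after also substituting $\widetilde{s}=s+t$), and the definition of $v$ controls the ratio of weights exactly as you say. Apart from this, your plan---the changes of variables, the decomposition $t = k + t'$ with $t'\in[0,1)$ to absorb the non-integer part into a uniform $N=N(A,\Omega)$, and the case split on the sign of $t$ via the cube inclusions $Q_\ell\subset Q_{\ell+m}$---matches the paper's argument.
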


\begin{proof}
  Most of the quasi-norm properties for $\| \cdot \|_{\PTi}$ can be
  easily verified from the definition, while the positive definiteness follows from
  \cite[Lemma B.1]{KvVV2021anisotropic}.  It is clearl that $\| \cdot \|_{\PTi}$ is solid and for $q \geq 1$ a norm.
  The completeness follows from $\| \cdot \|_{\PTi}$ satisfying the \emph{Fatou
    property} (see \cite[Section~65, Theorem~1]{zaanen1967integration}
  and \cite[Lemma~2.2.15]{VoigtlaenderPhDThesis}), which is easily
  verified by a straightforward computation using Fatou's lemma, see, e.g., the proof of \cite[Lemma 5.3]{KvVV2021anisotropic}.

  It remains to prove the translation invariance and associated norm estimates.  We will only
  consider $q \in (0, \infty)$, the arguments for $q=\infty$ are
  analogous.  To this end, let $F \in \PTi(G_A)$ and
  ${(y,t) \in \mathbb{R}^d \times \mathbb{R}}$ be arbitrary.  Then the
  substitutions $\widetilde{x} = A^{-t}(x - y)$,
  $\widetilde{w} = A^{-t}(w-y)$, and $\widetilde{z} = A^{-t} z$, as
  well as $\widetilde{s} = s-t$, yield
  \begin{align*}
    \| &L_{(y,t)}  F \|_{\PTi}^q \\
       &= \sup_{\ell \in \Z, w \in \R^d} \bigg(
         \dashint_{Q_\ell(w)} \int_{-\infty}^\ell \bigg[ |\det A|^{\alpha s} \esssup_{z \in \mathbb{R}^d}
         \frac{|F(A^{-t}(x+z-y),s-t)|}
         {(1 + \rho_A (A^{-s} z))^{\beta}}
         \bigg]^q
         \frac{ds \, dx}{|\det A|^{s}}
         \bigg) \\
       &= \sup_{\ell \in \Z, \widetilde{w} \in \R^d} \bigg(
         |\det A|^{-\ell}\int_{Q_{\ell-t}( \widetilde{w})}
         \int_{-\infty}^{\ell} \bigg[ |\det A|^{\alpha s} \esssup_{\widetilde{z} \in \mathbb{R}^d}
         \frac{|F(\widetilde{x}+\widetilde{z}, s-t)|}
         {(1 + \rho_A (A^{t-s} \widetilde{z}))^{\beta}}
         \bigg]^q
         \frac{ds \, d\widetilde{x}}{|\det A|^{s-t}}
         \bigg) \\
       &= \sup_{\ell \in \Z, \widetilde{w} \in \R^d} \bigg(
         |\det A|^{-\ell}\int_{Q_{\ell-t}( \widetilde{w})}
         \int_{-\infty}^{\ell-t} \bigg[ |\det A|^{\alpha (\widetilde{s} +t)} \esssup_{\widetilde{z} \in \mathbb{R}^d}
         \frac{|F(\widetilde{x}+\widetilde{z}, \widetilde{s})|}
         {(1 + \rho_A (A^{-\widetilde{s}} \widetilde{z}))^{\beta}}
         \bigg]^q
         \frac{d\widetilde{s} \, d\widetilde{x}}{|\det A|^{\widetilde{s}}}
         \bigg),
  \end{align*}
  where the transformation of the domain is
  $A^{-t}(Q_\ell(w)-y) = A^{-t}(A^\ell\Omega +w -y) = A^{\ell-t}\Omega
  + \widetilde{w}$.
  To estimate this further, we
  decompose $t = k + t'$ with $k \in \Z$ and $t' \in [0,1)$.  By
  \cite[Lemma 2.4]{KvVV2021anisotropic}, there exists
  $N = N(A, \Omega) \in \N$ such that $A^{-t'}\Omega \subset A^N \Omega$ for all
  $t' \in [0,1)$, and hence
  $Q_{\ell-t}( \widetilde{w}) \subset Q_{\ell-k+N}( \widetilde{w})$.
  Increasing the upper limit of the inner integral from $\ell -t$ to $\ell -k +N$
  and substituting $\widetilde{\ell} = \ell -k +N \in \Z$ gives
  \begin{align*}
     \| &L_{(y,t)}  F \|_{\PTi}^q \\
    &\leq \sup_{\widetilde{\ell} \in \Z, \widetilde{w} \in \R^d} \bigg(
      |\det A|^{-\widetilde{\ell} -k+N}\int_{Q_{\widetilde{\ell}}( \widetilde{w})}
      \int_{-\infty}^{\widetilde{\ell}}
      \bigg[ |\det A|^{\alpha (\widetilde{s} +t)}
      \esssup_{\widetilde{z} \in \mathbb{R}^d}
      \frac{|F(\widetilde{x}+\widetilde{z}, \widetilde{s})|}
      {(1 + \rho_A (A^{-\widetilde{s}} \widetilde{z}))^{\beta}}
      \bigg]^q
      \frac{d\widetilde{s} \, d\widetilde{x}}{|\det A|^{\widetilde{s}}}
      \bigg) \\
    &\leq \sup_{\widetilde{\ell} \in \Z, \widetilde{w} \in \R^d} \bigg(
      |\det A|^{-t+N+1}\dashint_{Q_{\widetilde{\ell}}( \widetilde{w})}
      \int_{-\infty}^{\widetilde{\ell}}
      \bigg[ |\det A|^{\alpha (\widetilde{s} +t)}
      \esssup_{\widetilde{z} \in \mathbb{R}^d}
      \frac{|F(\widetilde{x}+\widetilde{z}, \widetilde{s})|}
      {(1 + \rho_A (A^{-\widetilde{s}} \widetilde{z}))^{\beta}}
      \bigg]^q
      \frac{d\widetilde{s}  \, d\widetilde{x}}{|\det A|^{\widetilde{s}}}
      \bigg) \\
    &= |\det A|^{t(\alpha q -1)+N+1} \|F\|_{\PTi}^q.
  \end{align*}

  For the right-translation, a direct calculation using the substitutions $\widetilde{z} = z + A^s y$
  and $\widetilde{s} = s + t$ shows that
  \begin{align*}
    \| &R_{(y,t)} F \|_{\PTi}^q \\
       &= \sup_{\ell \in \Z, w \in \R^d} \bigg(
         \dashint_{Q_\ell(w)} \int_{-\infty}^\ell \bigg[
         |\det A|^{\alpha s}
         \esssup_{z \in \mathbb{R}^d}
         \frac{|F(x+z+A^sy, s+t) |}
         {(1+\rho_A (A^{-s} z))^{\beta}}
         \bigg]^q
         \frac{ds \, dx}{|\det A|^{s}}
         \bigg) \\
       &= \sup_{\ell \in \Z, w \in \R^d} \bigg(
         \dashint_{Q_\ell(w)} \int_{-\infty}^{\ell+t} \bigg[
         |\det A|^{\alpha (\widetilde{s}-t)}
         \esssup_{\widetilde{z} \in \mathbb{R}^d}
         \frac{|F(x+\widetilde{z}, \widetilde{s}) |}
         {(1+\rho_A (A^{-\widetilde{s}} A^{t} \widetilde{z} - y))^{\beta}}
         \bigg]^q
         \frac{d\widetilde{s} \, dx}{|\det A|^{\widetilde{s}-t}}
         \bigg) \\
       &\leq |\det A|^{t - \alpha q t}  v(y,t)^{\beta q}  \\
       &\quad \quad \quad \quad \cdot \sup_{\ell \in \Z, w \in \R^d} \bigg(
         \dashint_{Q_\ell(w)} \int_{-\infty}^{\ell+t} \bigg[
         |\det A|^{\alpha \widetilde{s}}
         \esssup_{\widetilde{z} \in \mathbb{R}^d}
         \frac{|F(x+\widetilde{z}, \widetilde{s}) |}
         {(1+\rho_A (A^{-\widetilde{s}} \widetilde{z}))^{\beta}}
         \bigg]^q
         \frac{d\widetilde{s} \, dx}{|\det A|^{\widetilde{s}}}
         \bigg),
  \end{align*}
  where we used the fact that
  $\bigl(1+\rho_A (A^{-\widetilde{s}} A^{t} \widetilde{z} -
  y)\bigr)^{-1} \leq v(y,t)  (1 + \rho_A (A^{-\widetilde{s}}
  \widetilde{z}))^{-1}$ for all
  $(\widetilde{z},\widetilde{s}), (y,t) \in \R^d \times \R$ by
  definition.
  For $t \leq 0$, the claimed estimate follows immediately.  For $t > 0$, we
  again write $t = k + t'$ with $k \in \N_0$ and $t' \in [0, 1)$.  Then
  $\ell + t \leq \ell + k + 1$ and clearly
  $Q_\ell (w) = A^\ell \Omega + w \subset A^{\ell+k+1} \Omega + w =
  Q_{\ell+k+1}(w)$ by Lemma~\ref{lem:QuasiNorm}. Hence,

  \begin{align*}
    \sup_{\ell \in \Z, w \in \R^d}
    \mkern-18mu&\mkern18mu
                 \bigg(
                 \dashint_{Q_\ell(w)} \int_{-\infty}^{\ell+t} \bigg[
                 |\det A|^{\alpha \widetilde{s}}
                 \esssup_{\widetilde{z} \in \mathbb{R}^d}
                 \frac{|F(x+\widetilde{z}, \widetilde{s}) |}
                 {(1+\rho_A (A^{-\widetilde{s}} \widetilde{z}))^{\beta}}
                 \bigg]^q
                 \frac{d\widetilde{s} \,  dx
                 }{|\det A|^{\widetilde{s}}}
                 \bigg) \\
    \leq &\sup_{\ell \in \Z, w \in \R^d} \bigg(
           |\det A|^{-\ell} \int_{Q_{\ell+k+1} (w)} \int_{-\infty}^{\ell+k+1} \bigg[
           |\det A|^{\alpha \widetilde{s}}
           \esssup_{\widetilde{z} \in \mathbb{R}^d}
           \frac{|F(x+\widetilde{z}, \widetilde{s}) |}
           {(1+\rho_A (A^{-\widetilde{s}} \widetilde{z}))^{\beta}}
           \bigg]^q
           \frac{d\widetilde{s} \, dx}{|\det A|^{\widetilde{s}}}
           \bigg) \\
    = & |\det A|^{k+1} \sup_{\ell \in \Z, w \in \R^d} \bigg(
        \dashint_{Q_{\ell+k+1}(w)} \int_{-\infty}^{\ell+k+1} \bigg[
        |\det A|^{\alpha \widetilde{s}}
        \esssup_{\widetilde{z} \in \mathbb{R}^d}
        \frac{|F(x+\widetilde{z}, \widetilde{s}) |}
        {(1+\rho_A (A^{-\widetilde{s}} \widetilde{z}))^{\beta}}
        \bigg]^q
        \frac{d\widetilde{s} \, dx}{|\det A|^{\widetilde{s}}}
        \bigg) \\
    \leq & |\det A|^{t+1} \|F\|_{\PTi}^q,
  \end{align*}
  and consequently
  \begin{align*}
    \| R_{(y,t)} F \|_{\PTi}^q \leq |\det A|^{2t - \alpha q t +1}
    v(y,t)^{\beta q} \|F\|_{\PTi}^q,
  \end{align*}
  for $t>0$, which completes the proof.
\end{proof}

Lastly, we mention the following $r$-norm property of Peetre-type spaces.

\begin{lemma}
  \label{lem:r-norm-property}
  Let $\alpha \in \mathbb{R}$ and $\beta > 0$.  For
  $q \in (0, \infty]$, set $r := \min\{1, q \}$. Then
  $\| \cdot \|_{\PTi}$ is an $r$-norm, i.e.,
  \[\|F_1 + F_2 \|^r_{\PTi} \leq \| F_1 \|_{\PTi}^r + \| F_2 \|^r_{\PTi}  \quad \text{for} \quad F_1, F_2 \in \PTi.\]
\end{lemma}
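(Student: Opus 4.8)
The statement to prove is that $\|\cdot\|_{\PTi}$ is an $r$-norm for $r = \min\{1,q\}$. The plan is to reduce everything to the scalar inequality $(a+b)^r \le a^r + b^r$ for $a,b \ge 0$ together with the behaviour of the $q$-(quasi-)norms and $L^q$-(quasi-)norms appearing in the definition of $\|\cdot\|_{\PTi}$. First I would fix $F_1, F_2 \in \PTi(G_A)$ and, for each fixed $(x,s) \in G_A$, control the pointwise quantity
\[
  \esssup_{z \in \R^d}
    \frac{|(F_1 + F_2)(x+z,s)|}{(1 + \rho_A(A^{-s}z))^{\beta}}
  \le \esssup_{z \in \R^d}
        \frac{|F_1(x+z,s)|}{(1 + \rho_A(A^{-s}z))^{\beta}}
    + \esssup_{z \in \R^d}
        \frac{|F_2(x+z,s)|}{(1 + \rho_A(A^{-s}z))^{\beta}},
\]
which is just subadditivity of the essential supremum. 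Write $G_i(x,s)$ for the $i$-th term on the right and $G(x,s)$ for the left; so $G \le G_1 + G_2$ pointwise.

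Next I would split into the two cases $q \in (0,1]$ (so $r = q$) and $q \in [1,\infty]$ (so $r = 1$). In the first case, the $q$-th power is subadditive, so for every $\ell, w$,
\[
  \dashint_{Q_\ell(w)} \int_{-\infty}^{\ell}
    \bigl[ |\det A|^{\alpha s} G(x,s) \bigr]^q
    \frac{ds\,dx}{|\det A|^s}
  \le \sum_{i=1}^{2}
      \dashint_{Q_\ell(w)} \int_{-\infty}^{\ell}
        \bigl[ |\det A|^{\alpha s} G_i(x,s) \bigr]^q
        \frac{ds\,dx}{|\det A|^s},
\]
using $(G_1 + G_2)^q \le G_1^q + G_2^q$ and monotonicity/linearity of the integral. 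Taking the supremum over $\ell, w$ and using that $\sup(\,\cdot + \cdot\,) \le \sup(\cdot) + \sup(\cdot)$ gives $\|F_1 + F_2\|_{\PTi}^q \le \|F_1\|_{\PTi}^q + \|F_2\|_{\PTi}^q$, i.e.\ $\|F_1 + F_2\|_{\PTi}^r \le \|F_1\|_{\PTi}^r + \|F_2\|_{\PTi}^r$. In the second case ($r=1$), one instead uses Minkowski's inequality for the $L^q$-(quasi-)norm — which for $q \ge 1$ is a genuine norm — to obtain, for each $\ell, w$,
\[
  \Bigl( \dashint_{Q_\ell(w)} \int_{-\infty}^{\ell}
    \bigl[ |\det A|^{\alpha s} G(x,s) \bigr]^q
    \frac{ds\,dx}{|\det A|^s} \Bigr)^{1/q}
  \le \sum_{i=1}^{2}
      \Bigl( \dashint_{Q_\ell(w)} \int_{-\infty}^{\ell}
        \bigl[ |\det A|^{\alpha s} G_i(x,s) \bigr]^q
        \frac{ds\,dx}{|\det A|^s} \Bigr)^{1/q},
\]
and again taking the supremum over $\ell, w$ finishes the case. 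The case $q = \infty$ is identical to the $q \ge 1$ case with the $L^q$-average replaced by the essential supremum in the definition of $\PTii$, for which the triangle inequality is trivial.

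There is essentially no hard obstacle here; the one point requiring a small amount of care is the interchange of the pointwise-in-$(x,s)$ subadditivity of the inner $\esssup$ with the outer integration/supremum, which is justified because $G \le G_1 + G_2$ holds pointwise a.e.\ and the $Q_\ell(w)$-average with weight $|\det A|^{-s}\,ds\,dx$ is monotone. The other mild point is that for $q < 1$ one must work with $q$-th powers throughout (rather than $1/q$-powers), which is exactly what the $r$-norm formulation asks for. Since both cases only use elementary inequalities and the structural facts about $\PTi$ already established in \Cref{lem:TranslationNormBounds}, the proof is short.
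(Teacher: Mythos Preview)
Your proposal is correct and follows essentially the same approach as the paper: split into the cases $q \in [1,\infty]$ (where the ordinary triangle inequality for the $L^q$-norm suffices) and $q \in (0,1)$ (where one uses the subadditivity of $x \mapsto |x|^q$). The paper's proof is simply a one-line version of what you wrote out in detail.
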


\begin{proof}
  The claim follows immediately from the triangle inequality for
  $q \in [1, \infty]$, respectively a straightforward computation
  using the subadditivity of $x \mapsto |x|^q$ for $q \in (0, 1)$.
\end{proof}

\subsection{Standard envelope and control weight}

We recall the definition of a standard envelope given in
\cite[Definition~5.5]{KvVV2021anisotropic}.
\begin{definition}
  \label{def:standard-env}
  Let $\sigma = (\sigma_1, \sigma_2) \in (0,\infty)^2$ and let $L \in \R$.
  Then the \emph{standard envelope}
  $\Xi_{\sigma,L} : G_A \to (0,\infty)$ is given by
  $\Xi_{\sigma,L}(x,s):= \theta_\sigma (s) \cdot \eta_L(x,s)$, where
  \begin{equation*}
    \eta_L(x,s)
    := \big(
         1 + \min
         \{
           \rho_A (x), \rho_A(A^{-s} x)
         \}
       \big)^{-L}
    \qquad \text{and} \qquad
    \theta_\sigma (s)
    := \begin{cases}
        \sigma_1^s , & \text{if } s \geq 0, \\
        \sigma_2^s , & \text{if } s < 0.
       \end{cases}
  \end{equation*}
\end{definition}

A central notion in the theory of coorbit spaces \cite{feichtinger1989banach, velthoven2022quasi} is that of a so-called \emph{control weight}.
In the following lemma, we show the existence of such a weight for Peetre-type spaces and show that it can be estimated by standard envelopes as defined in Definition \ref{def:standard-env}. The construction of the control weight follows
  \cite[Lemma~5.7]{KvVV2021anisotropic}, but besides the slightly different
  parameters, the case distinction for the right translation needs to
  be accommodated with a few extra terms.
  The details are as follows.

\begin{lemma}\label{lem:ControlWeights}
  Let $\alpha \in \R$ and $\beta > 0$. For $q \in (0, \infty]$,
  set $r := \min\{1, q \}$.
  Then there exists a continuous, submultiplicative weight
  $w = w^{\alpha,\beta}_{\infty,q} : G_A \to [1,\infty)$ such that
  \begin{equation*}
    w(g) = \Delta^{1/r} (g^{-1}) \, w(g^{-1}),
     \qquad
     \vertiii{L_{g^{-1}}} \leq w(g),
    \qquad
 \vertiii{R_{g}}
    \leq w(g), \quad g \in G_A,
    \label{eq:ControlWeightProperties}
  \end{equation*}
  with implicit constant depending on $A,\beta$.
  The weight $w$ will be referred to as the \emph{standard control weight}.

  Moreover, for $q \in (0, \infty)$, set
  $\sigma_1 := |\det A|^{1/r + |\alpha-1/q|}$ and
  $\sigma_2 := |\det A|^{-|\alpha-1/q|}$, as well as
  \begin{equation*}
    \kappa_1
    := \begin{cases}
         |\det A|^{1/r+\alpha+\beta-1/q}, & \text{if } \alpha \geq -\frac{1/r+\beta-3/q}{2}, \\[0.1cm]
         |\det A|^{-(\alpha-2/q)},        & \text{otherwise} ,
       \end{cases}
    \end{equation*}
    and
    \begin{equation*}
    \kappa_2
    := \begin{cases}
         |\det A|^{-(\alpha+\beta-1/q)}, & \text{if } \alpha \geq -\frac{1/r+\beta-3/q}{2}, \\[0.1cm]
         |\det A|^{1/r + \alpha - 2/q},     & \text{otherwise} .
       \end{cases}
    \label{eq:KappaDefinition}
  \end{equation*}
  For $q = \infty$, set
  $\sigma_1 := |\det A|^{1 + |\alpha|}$ and
  $\sigma_2 := |\det A|^{-|\alpha|}$, as well as
  \begin{equation*}
    \kappa_1
    := \begin{cases}
         |\det A|^{1+\alpha+\beta}, & \text{if } \alpha \geq -\frac{\beta}{2}, \\[0.1cm]
         |\det A|^{-(\alpha-1)},        & \text{otherwise} ,
       \end{cases}
       \qquad \text{and} \qquad
       \kappa_2
    := \begin{cases}
         |\det A|^{-(\alpha+\beta)}, & \text{if } \alpha \geq -\frac{\beta}{2}, \\[0.1cm]
         |\det A|^{\alpha},     & \text{otherwise} .
       \end{cases}
    \end{equation*}
  Then the standard control weight $w$ satisfies
  \(
    w \asymp \Xi_{\sigma, 0} + \Xi_{\kappa, -\beta} .
  \)
\end{lemma}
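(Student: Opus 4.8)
The plan is to construct $w$ directly from the operator-norm estimates in \Cref{lem:TranslationNormBounds} and then verify the three required identities/inequalities, before turning to the envelope comparison. First I would recall the standard recipe for control weights (cf. \cite[Lemma 5.7]{KvVV2021anisotropic}): set, for $g = (y,t) \in G_A$,
\[
  w_0(g) := \max\bigl\{ \vertiii{L_{g^{-1}}},\ \vertiii{R_g},\ \Delta^{1/r}(g^{-1})\,\vertiii{L_{g}},\ \Delta^{1/r}(g^{-1})\,\vertiii{R_{g^{-1}}} \bigr\},
\]
and then symmetrize it to obtain genuine submultiplicativity. Concretely, since the bounds in \Cref{lem:TranslationNormBounds} are themselves submultiplicative up to constants (products of powers of $|\det A|^{t}$ and $v(y,t)^\beta$, with $v$ submultiplicative by \cite[Lemma 5.2]{KvVV2021anisotropic}), one checks that $w_0$ is \emph{sub}multiplicative up to a uniform constant $C$; replacing $w_0$ by $g \mapsto \sup_{h \in G_A} w_0(gh)/w_0(h)$ (or simply absorbing $C$ by a standard rescaling argument, as in \cite{velthoven2022quasi}) yields a genuinely submultiplicative $w \geq 1$. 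Continuity follows because all the constituent bounds are continuous in $(y,t)$ — the only point to watch is the case split at $t = 0$ in \Cref{lem:TranslationNormBounds}, where the two branches agree at $t=0$, and the case split defining $\theta_\sigma$, which is likewise continuous at $s=0$. The identity $w(g) = \Delta^{1/r}(g^{-1})\,w(g^{-1})$ is then built into the definition by construction: the four entries in the $\max$ defining $w_0$ are permuted (and rescaled by $\Delta^{1/r}$) under $g \mapsto g^{-1}$ precisely so that this relation holds, using $\Delta(g^{-1}) = \Delta(g)^{-1}$ and $\Delta_{G_A}(x,s) = |\det A|^{-s}$. The two operator-norm bounds $\vertiii{L_{g^{-1}}} \leq w(g)$ and $\vertiii{R_g} \leq w(g)$ are immediate since these quantities are among the terms in the $\max$.

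Next I would compute the explicit exponents. Writing $g = (y,t)$ with $t = k + t'$, $k \in \Z$, $t' \in [0,1)$, and substituting the bounds from \Cref{lem:TranslationNormBounds}, one finds that $\vertiii{L_{(y,t)}}$ contributes a factor $|\det A|^{t(\alpha - 1/q)_{+}}$-type growth (bounded by $|\det A|^{|t|\,|\alpha-1/q|}$ up to the $(N+1)/q$ correction, which is a bounded factor since only $t' \in [0,1)$ enters it after the periodization), $\Delta^{1/r}(g^{-1}) = |\det A|^{t/r}$, and $\vertiii{R_{(y,t)}}$ contributes $v(y,t)^\beta$ times a power of $|\det A|^t$ whose exponent differs on $t > 0$ versus $t \leq 0$. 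Taking the maximum over the four terms, the $\theta_\sigma$-part is governed by $\sigma_1 = |\det A|^{1/r + |\alpha - 1/q|}$ for $t \geq 0$ and $\sigma_2 = |\det A|^{-|\alpha-1/q|}$ for $t < 0$ (the sign pattern coming from which of $L$ or $R$ dominates), while the $v(y,t)^\beta$-contribution is absorbed into the $\Xi_{\kappa, -\beta}$ term once one invokes the pointwise estimate $v(y,t) \lesssim (1 + \min\{\rho_A(y), \rho_A(A^{-t} y)\})$ — this bound is exactly \cite[Lemma 5.2]{KvVV2021anisotropic} (or follows from the definition of $v$ by optimizing over $(z,u)$ and using the quasi-norm properties \eqref{eq:QuasiNormProperties}), so that $v(y,t)^\beta \asymp \eta_{-\beta}(y,t)$ in the notation of \Cref{def:standard-env}. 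The $\kappa_1, \kappa_2$ case distinctions arise simply from comparing the exponent $1/r + \alpha + \beta - 1/q$ (from the $R_{g}$, $t>0$ branch after pairing with $\Delta^{1/r}$) against $-(\alpha - 2/q)$ (from a competing branch); the threshold $\alpha \geq -\frac{1/r + \beta - 3/q}{2}$ is exactly where these two linear-in-$\alpha$ expressions cross. The $q = \infty$ formulas are the same computation with $1/q = 0$, $1/r = 1$.

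The main obstacle I anticipate is the bookkeeping in the envelope comparison $w \asymp \Xi_{\sigma,0} + \Xi_{\kappa,-\beta}$: one must show both that $w$ is bounded \emph{above} by this sum (splitting into the regime where $\rho_A(y)$ is large — where the $\eta_{-\beta}$ decay must beat the exponential growth in $t$, forcing the $-\beta$ exponent and the specific $\kappa$ — versus $\rho_A(y)$ bounded, where $\Xi_{\sigma,0}$ alone suffices) and bounded \emph{below} (each of $\Xi_{\sigma,0}$ and $\Xi_{\kappa,-\beta}$ must be dominated by one of the four terms in the $\max$, for suitable ranges of $(y,t)$). The delicate point is matching the $v(y,t)^\beta$ factor, which involves $\min\{\rho_A(y), \rho_A(A^{-t}y)\}$, against $\eta_{-\beta}$, which involves the same minimum — here one uses that for $t > 0$, $A^{-t}$ contracts, so $\rho_A(A^{-t} y) \leq \rho_A(y)$ roughly, whereas for $t < 0$ the roles reverse, and this is precisely why the envelope uses $\min$ and why $\theta_\sigma$ has its two-sided structure. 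I would handle this by fixing the sign of $t$, reducing to the integer part $k$ via $t'\in[0,1)$ contributing only bounded factors, and then comparing exponents term by term; the constants depend only on $A$, $\beta$ (and $d$, $\Omega$), as claimed, since $N = N(A,\Omega)$ and the submultiplicativity constant of $v$ depend only on these.
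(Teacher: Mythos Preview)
Your proposal follows the right general recipe, but there are two concrete gaps.

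First, defining $w_0$ via the actual operator norms $\vertiii{L_{g^{-1}}}$, $\vertiii{R_g}$, etc., creates a problem for the envelope comparison $w \asymp \Xi_{\sigma,0} + \Xi_{\kappa,-\beta}$. \Cref{lem:TranslationNormBounds} only provides \emph{upper} bounds on these operator norms, so you can get $w_0 \lesssim \Xi_{\sigma,0} + \Xi_{\kappa,-\beta}$, but the reverse inequality would require \emph{lower} bounds on the translation norms, which are not available and which you do not establish. (Relatedly, continuity of $g \mapsto \vertiii{L_{g^{-1}}}$ is not established by observing that ``the bounds are continuous''; the bounds are not the operator norms themselves.) The paper sidesteps both issues by defining $w$ directly as a maximum of the explicit functions appearing as upper bounds---powers $a_\tau(x,s) := |\det A|^{s\tau}$ together with products $a_\tau \cdot v_0^\beta$ and $a_\tau \cdot (v_0^\vee)^\beta$, where $v_0 \asymp v$ is a continuous submultiplicative replacement obtained via \cite{reiter2000}. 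With this definition, continuity, submultiplicativity, the identity $w(g) = \Delta^{1/r}(g^{-1}) w(g^{-1})$, and the two-sided envelope asymptotics are all verified by direct computation, while the operator-norm inequalities follow from \Cref{lem:TranslationNormBounds}.

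Second, the asymptotic you claim, $v(y,t)^\beta \asymp \eta_{-\beta}(y,t)$, is false: already at $y=0$ one has $v(0,t) \asymp |\det A|^{\NegPart{t}}$ whereas $\eta_{-1}(0,t) = 1$. The correct relations (used in the paper, from \cite[Lemma~5.7]{KvVV2021anisotropic}) are $v(x,s) \asymp |\det A|^{\NegPart{s}}\,\eta_{-1}(x,s)$ and $v^\vee(x,s) \asymp |\det A|^{\PosPart{s}}\,\eta_{-1}(x,s)$. These extra exponential factors are precisely what inserts the $+\beta$ into the exponents defining $\kappa_1, \kappa_2$: for $s \geq 0$ the term $a_{\zeta+1/r}\cdot (v_0^\vee)^\beta$ behaves like $|\det A|^{(1/r+\zeta+\beta)s}\,\eta_{-\beta}$, and comparing the exponent $1/r+\zeta+\beta = 1/r+\alpha+\beta-1/q$ against $-\delta = -(\alpha-2/q)$ is where the threshold $\alpha \geq -(1/r+\beta-3/q)/2$ arises. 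Without the $|\det A|^{\PosPart{s}\beta}$ factor, your computation of $\kappa$ would not match the stated values.
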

\begin{proof}
 By \cite[Lemma~5.2]{KvVV2021anisotropic},  the weight $v : G_A \to [0,\infty)$ appearing in
  Lemma~\ref{lem:TranslationNormBounds} is submultiplicative, measurable, and
  locally bounded.  It also
  satisfies $v \geq 1$, and is therefore a weight function in the
  sense of \cite[Definition~3.7.1]{reiter2000}.  Hence, there exists a
  \emph{continuous}, submultiplicative function
  $v_0 : G_A \to [1,\infty)$ with $v \asymp v_0$ by the proof of
  \cite[Theorem~3.7.5]{reiter2000}.

  Let $\tau \in \R$ and define
  $a_{\tau}(g) = a_{\tau} (x,s) := |\det A|^{s \tau}$ for
  $g = (x,s) \in G_A$.  Then the weight function
  $w_{\gamma, \delta, \zeta} : G_A \to [1,\infty)$ with respect to
  $\gamma, \delta, \zeta \in \R$ is given by
  \begin{align*}
    w_{\gamma, \delta, \zeta} := \max \big\{ &1, \,\,\, a_{1/r}, \,\,\, a_{\gamma}, \,\,\,
    a_{-\gamma}, \,\,\, a_{\gamma + 1/r}, \,\,\, a_{1/r - \gamma}, \,\,\, \\
    &a_{\delta + 1/r} \cdot (v_0^\vee)^\beta, \,\,\, a_{-\delta} \cdot
    v_0^\beta, \,\,\, a_{\zeta + 1/r} \cdot (v_0^\vee)^\beta, \,\,\, a_{-\zeta} \cdot
    v_0^\beta \big\}.
  \end{align*}
  Clearly $w_{\gamma, \delta, \zeta}$ is again continuous and submultiplicative.
  As $\Delta = a_{-1}$ and $a_{\tau}^{\vee} = a_{-\tau}$, it is easily verified that
  \begin{align*}
    (\Delta^{1/r})^{\vee} \cdot w^{\vee}_{\gamma, \delta, \zeta}
    = w_{\gamma, \delta, \zeta} .
  \end{align*}
  We choose the parameters $\gamma, \delta, \zeta$ according to
  Lemma~\ref{lem:TranslationNormBounds}, i.e.,
  $\gamma:= \alpha - 1/q$, $\delta:= \alpha - 2/q$,
  $\zeta:= \alpha - 1/q$ if $q \in (0,\infty)$, and $\gamma:= \alpha$,
  $\delta:= \alpha - 1$, $\zeta:= \alpha$ otherwise. Set
  $w= w^{\alpha, \beta}_{\infty,q}:=w_{\gamma, \delta, \zeta}$.  For $g = (x,s) \in G_A$, an application of
  Lemma~\ref{lem:TranslationNormBounds} implies
  $\vertiii{L_{g^{-1}}} \lesssim a_{-\gamma}(g) \leq w(g)$ and
  \begin{equation*}
     \vertiii{R_{g}} \lesssim
    \begin{dcases}
      &a_{-\delta}(g)  v_0(g)^{\beta}, & \text{if } s > 0    \\[0.1cm]
      &a_{-\zeta}(g)  v_0(g)^{\beta}, & \text{if } s \leq 0
    \end{dcases}
    \leq w(g).
  \end{equation*}

  It remains to show that the control weight can be estimated by
  standard envelopes.  To this end, note that $w \asymp w_1 + w_2$ for
  the weights given by
  ${w_1 := \max \{ a_0, a_{1/r}, a_\gamma, a_{-\gamma},
    a_{1/r+\gamma}, a_{1/r-\gamma} \}}$ and
  \( w_2 := \max \bigl\{ a_{\delta + 1/r} \cdot (v_0^{\vee})^\beta ,
  \,\, a_{-\delta} \cdot v_0^\beta, \,\, a_{\zeta + 1/r} \cdot
  (v_0^{\vee})^\beta , \,\, a_{-\zeta} \cdot v_0^\beta \bigr\} \).  A
  straightforward computation, also done in the proof of
  \cite[Lemma~5.7]{KvVV2021anisotropic}, reveals that
  \begin{equation*}
    w_1(g) = w_1(x,s)
     \leq \begin{dcases}
      &|\det A|^{s  (1/r + |\gamma|)}, & \text{if } s \geq 0, \\
      &|\det A|^{- s |\gamma|}, & \text{if } s <    0, \\
    \end{dcases}
    = \theta_\sigma (s) = \Xi_{\sigma,0}(x,s),
  \end{equation*}
  by the choice of $\sigma$.
  For estimating $w_2$, we use the fact that
  $v_0(x,s) \asymp |\det A|^{\NegPart{s}} \, \eta_{-1} (x,s)$ and
  $v_0^\vee(x,s) \asymp |\det A|^{\PosPart{s}} \, \eta_{-1} (x,s)$ as
  shown in \cite[Lemma~5.7]{KvVV2021anisotropic}. For $s \geq 0$, this gives
  \begin{align*}
    w_2(x,s)
    & \asymp \big( \eta_{-1}(x,s) \big)^{\beta}
      \max \big\{
      |\det A|^{(1/r + \delta + \beta) s} , \,\,
      |\det A|^{-\delta s}, \,\,
      |\det A|^{(1/r + \zeta + \beta) s} , \,\,
      |\det A|^{-\zeta s}
      \big\} \\
   & = \eta_{-\beta}(x,s)  \kappa_1^s
      = \Xi_{\kappa,-\beta} (x,s) ,
  \end{align*}
  since, in the case of $q \in (0, \infty)$, we have
  \[ \max \{ -\delta, \,\,\, 1/r + \zeta + \beta \} = \max \{ -\alpha +
  2/q, \,\,\, 1/r + \alpha + \beta - 1/q, \} = 1/r + \alpha + \beta -
  1/q \] if and only if $\alpha \geq -\frac{1/r+\beta-3/q}{2}$,
  with a similar case distinction for $q = \infty$.
  The estimate for $s < 0$ follows analogously.
\end{proof}

\subsection{Coorbit spaces}
\label{sub:CoorbitSpaces}
The aim of this section is to show that Triebel-Lizorkin spaces $\TLi$ can be identified with so-called \emph{coorbit spaces} \cite{feichtinger1989banach, velthoven2022quasi} by use of Theorem \ref{thm:norm_equiv}.

The definition of coorbit spaces requires the notion of a local maximal function. For a function $F \in L^{\infty}_{\loc} (G_A)$, its (left-sided) \emph{maximal function} is defined by
\begin{align} \label{eq:left_maximal}
M_Q^L F (g) = \esssup_{u \in Q} |F(g u)|, \quad g \in G_A,
\end{align}
where $Q \subset G_A$ is a relatively compact unit neighborhood.

\begin{definition}\label{def:PeetreCoorbitDefinition}
  Let $q \in (0,\infty]$, $\alpha \in \R$, and $\beta > 0$.
  Let $A \in \mathrm{GL}(d, \mathbb{R})$ be expansive and  exponential,
   and let $Q \subset G_A$ be a relatively compact, symmetric unit neighborhood.

  For admissible $\psi \in \SC_0(\mathbb{R}^d)$,
  the \emph{coorbit space} $\Co (\PTi) = \Co_{\psi} (\PTi)$  is the collection of all $f \in \SC_0'(\R^d)$ satisfying
  \[
    \| f \|_{\Co(\PTi)} = \| f \|_{\Co_{\psi} (\PTi)}
    = \big\| M^L_Q (W_{\psi} f) \big\|_{\PTi}
    < \infty.
  \]
  The space will be equipped with the (quasi-)norm $\| \, . \, \|_{\Co(\PT)}$.
\end{definition}

The space $\Co(\PTi)$ as defined in \Cref{def:PeetreCoorbitDefinition} is complete with respect to the quasi-norm $\| \cdot \|_{\Co(\TLi)}$.
In addition, its definition is independent of the chosen defining vector $\psi$ and unit neighborhood $Q$,
with equivalent norms for different choices.
These basic properties follow from the general theory
\cite{velthoven2022quasi}; see \cite[Remark 5.10]{KvVV2021anisotropic} for details and references.

The following is the key result of this section.

\begin{proposition}\label{prop:TL_coorbit}
  Let $\alpha \in \mathbb{R}$ and $q \in (0,\infty]$. Let $\beta > 1/q$ if $q \in (0, \infty)$,
  and $\beta > 1$ if $q = \infty$.
  Then
  \begin{equation*}
    \TLi =
    \begin{cases}
      \Co \bigl(\PTalti{-(\alpha+1/2-1/q)}\bigr),
      &q \in (0, \infty), \\
      \Co \bigl(\PTaltii{-(\alpha+1/2)}\bigr),
      &q = \infty.
    \end{cases}
  \end{equation*}
\end{proposition}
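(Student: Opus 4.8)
The plan is to show that the coorbit norm $\| f \|_{\Co_{\psi}(\PTaltii{-(\alpha+1/2)})}$ (and its $q < \infty$ analogue) is equivalent to the Peetre-type maximal norm appearing on the right-hand side of the equivalences in \Cref{thm:norm_equiv}, so that the result follows by combining the two. The key observation is that the wavelet transform $W_{\psi} f$ of $f \in \SP \cong \SC_0'(\R^d)$ can be written, for $\psi \in \SC_0(\R^d)$, as $W_{\psi} f(x,s) = \langle f, \pi(x,s)\psi\rangle = |\det A|^{s/2} (f \ast \widetilde{\psi}_{-s})(x)$, where $\widetilde{\psi}(y) = \overline{\psi(-y)}$ (after carefully tracking the normalization of $\pi$ in \eqref{eq:dilate_tranlate}, which uses $|\det A|^{-s/2}$). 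Thus, up to the $|\det A|^{s/2}$ factor and the reparametrization $j \leftrightarrow -s$, the modulus $|W_{\psi}f(x,s)|$ is exactly $|\det A|^{s/2} |(f \ast \widetilde\psi_{-s})(x)|$, and the extra power $|\det A|^{s/2}$ is what forces the shift from the weight exponent $\alpha$ to $-(\alpha+1/2-1/q)$ once one also accounts for the Haar-measure density $|\det A|^{-s}$ built into the definition of $\PTi$.

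The main steps I would carry out are: (1) Unwind the definition of $M_Q^L(W_\psi f)$ for a conveniently chosen relatively compact symmetric unit neighborhood $Q = V \times [-h,h] \subset G_A$, and show that $M_Q^L(W_\psi f)(x,s) \asymp \esssup_{|u|\le h}\, \esssup_{z}\, |\det A|^{s/2} |(f\ast\widetilde\psi_{-s+u})(x+z)| / (1+\rho_A(A^{s} z))^{\beta}$ using the group law \eqref{eq:SemidirectProductDefinition} and the submultiplicativity \eqref{eq:nusubmultiplicative}; the inner essential supremum over $z$ is (after substituting $\widetilde z = A^{-s} z$, equivalently recognizing $A^{-s}$ vs. $A^j$) comparable to the Peetre maximal function $\DoubleStar{-s}(f)(x)$ for the analyzing vector $\widetilde\psi$, up to a local supremum over the scale $|u|\le h$. (2) Absorb the local scale supremum over $u$: this is exactly the kind of discretization/periodization estimate already used in Steps 1 and 2 of the proof of \Cref{thm:norm_equiv} (cf.\ \eqref{eq:LHSindependt} and \eqref{eq:det_Peetre}), so a supremum over $|u|\le h$ of $|\det A|^{\alpha(s+u)}\DoubleStar{-(s+u)}f$ is comparable to a finite sum of $|\det A|^{\alpha j}\DoubleStar{j}f$ over $|j+s|\le h+1$, which changes the $\ell^q$-in-$s$ norm only by a fixed constant. (3) Substitute $s = -j$ (resp.\ keep $s$ continuous) and compare the Haar density $|\det A|^{-s}\,ds = |\det A|^{j}$ against $dx$-normalization: the factor $|\det A|^{s/2}$ from the representation combines with the weight $|\det A|^{(-(\alpha+1/2-1/q))s}$ and the $|\det A|^{-s/q}$ coming from the $q$-th root of $|\det A|^{-s}$ to reproduce exactly $|\det A|^{\alpha j}$, since $-(-(\alpha+1/2-1/q)) - 1/2 + 1/q \cdot q \cdot (1/q)$ — i.e., the bookkeeping $-(\alpha+1/2-1/q)\cdot(-j) = (\alpha+1/2-1/q) j$, plus $+j/2$ from the unitary factor raised to the $q$, minus $j/q$ from the density — collapses to $\alpha j$; for $q=\infty$ the $1/q$ terms drop and one uses $-(\alpha+1/2)$ with $+1/2$ from the representation. (4) Finally invoke the norm independence of $\Co(\PTi)$ on the defining vector $\psi$ (it may be taken to be an admissible Schwartz function with $\widehat\psi \in C_c^\infty(\R^d\setminus\{0\})$, which exists by \Cref{lem:admissible_expansive} and satisfies \eqref{eq:analyzing_positive}), so that the analyzing vector $\widetilde\psi$ in the Peetre characterization may be replaced by any $\varphi$ as in \Cref{thm:norm_equiv}; this closes the loop and gives $\TLi = \Co(\PTalti{-(\alpha+1/2-1/q)})$ and the $q=\infty$ version.

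The main obstacle I anticipate is not any single hard inequality but rather the careful bookkeeping in step (3): correctly matching the three competing powers of $|\det A|^s$ (the $|\det A|^{s/2}$ from the unitary normalization of $\pi$, the $|\det A|^{-s}$ from left Haar measure inside the $\PTi$-norm, and the weight exponent $\alpha'$) so that the $\ell^q$- and $L^\infty$-structures of $\PTi$ line up on the nose with the defining (quasi-)norms of $\TLi$ in \Cref{thm:norm_equiv}, including the correct range $s \le \ell$ vs.\ $j \ge -\ell$ in the inner scale integral/supremum — note that $\PTi$ integrates $s$ over $(-\infty, \ell]$ whereas \Cref{thm:norm_equiv} sums $j$ over $[-\ell, \infty)$, and the substitution $s = -j$ is precisely what reconciles these. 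A secondary technical point is verifying that replacing the cube-averaging in $\TLi$ by $\Omega$-ball averaging (already handled by \Cref{lem:independence_balls}) matches the $Q_\ell(w) = A^\ell\Omega + w$ used in \Cref{def:PTi}, which is immediate since both use the same ellipsoid $\Omega = \Omega_A$.
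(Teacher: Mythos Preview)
Your proposal is correct and follows essentially the same approach as the paper's proof: identify $W_{\psi}f(x,s) = |\det A|^{s/2}(f\ast\psi^{\ast}_{-s})(x)$, unwind the $\PTi$-norm of $W_{\psi}f$ to recover the Peetre-type maximal norm of \Cref{thm:norm_equiv} via the substitution $s\leftrightarrow -j$ (your bookkeeping in step~(3) is accurate), and then absorb the local maximal function $M_Q^L$ using the same discretization/periodization estimates as in \eqref{eq:LHSindependt}--\eqref{eq:det_Peetre}. The paper organizes this slightly differently (first proving $\|f\|_{\TLi}\asymp\|W_{\psi}f\|_{\PTalti{-\alpha'}}$ without the maximal function, then treating the two inequalities for $M_Q^L$ separately, and importing the key pointwise bound on $M_Q^L(W_{\psi}f)$ from the companion paper rather than rederiving it), but the substance is the same.
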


\begin{proof}

  Throughout, let $\psi \in \SC(\R^d)$ be an admissible vector
  with $\widehat{\psi} \in C^{\infty}_c (\mathbb{R}^d \setminus \{0\}$ satisfying the support condition
  \eqref{eq:analyzing_positive}.  The existence of such vectors follows by
  \cite[Proposition 10]{currey2016integrable}; see also Lemma~\ref{lem:admissible_expansive}
  Furthermore, define $Q := [-1, 1)^d \times [-1,1)$.

  We split the proof into three steps and consider only
  $q \in (0, \infty)$. The case $q = \infty$ follows with the usual
  adjustments. Set $\alpha':= \alpha + 1/2 - 1/q$. The first two steps are essentially identical to the first part of the proof of \cite[Proposition 5.11]{KvVV2021anisotropic},
  but included for completeness.
  \\\\
  \textbf{Step 1.} Note that $\psi^{\ast} \in \SC_0(\R^d)$ also satisfies
  \eqref{eq:analyzing_positive}, where $\psi^{\ast} (t) = \overline{\psi(-t)}$.  A direct calculation gives
  \begin{align} \label{eq:wavelet_conv}
    W_{\psi}f(x,s) = \langle f, \pi(x,s) \psi \rangle
    = \langle f , |\det A|^{s/2} T_x \psi_{-s} \rangle
    = |\det A|^{s/2} f \ast \psi^{\ast}_{-s}(x) .
  \end{align}
  By \cite[Lemma A.1]{KvVV2021anisotropic}, it holds for arbitrary $\beta > 0$ and $s \in \mathbb{R}$ that
  \[
  (\psi^{\ast})_{s, \beta}^{**} f(x): = \sup_{z \in \mathbb{R}^d} \frac{|(f \ast \psi^{\ast}_s) (x+z)|}{(1+\rho_A (A^s z))^{\beta}} = \esssup_{z \in \mathbb{R}^d} \frac{|(f \ast \psi^{\ast}_s) (x+z)|}{(1+\rho_A (A^s z))^{\beta}}, \quad x \in \mathbb{R}^d.
  \]
  Therefore, an application of Theorem~\ref{thm:norm_equiv} yields
  \begin{align*}
    \|f\|_{\TLi}^q
    &\asymp  \sup_{\ell \in \Z, w \in \R^d}  \bigg(
      \dashint_{Q_{\ell} (w)} \int_{-\ell}^\infty \bigg[ |\det A|^{\alpha s}
      \esssup_{z \in \mathbb{R}^d} \frac{| (f \ast \psi^{\ast}_{s})(x + z)|} {(1 +
      \rho_A (A^{s} z))^{\beta}} \bigg]^q ds \,
      dx \bigg)\\
    &=  \sup_{\ell \in \Z, w \in \R^d}  \bigg(
      \dashint_{Q^{\ell} (w)} \int_{-\infty}^\ell \bigg[ |\det A|^{- \alpha s}
      \esssup_{z \in \mathbb{R}^d} \frac{| (f \ast \psi^{\ast}_{-s})(x + z)|} {(1 +
      \rho_A (A^{-s} z))^{\beta}} \bigg]^q ds \,
      dx \bigg)\\
    &=  \sup_{\ell \in \Z, w \in \R^d}  \bigg(
      \dashint_{Q_{\ell} (w)} \int_{-\infty}^\ell \bigg[ |\det A|^{- (\alpha +1/2 -1/q) s}
      \esssup_{z \in \mathbb{R}^d} \frac{|W_\psi f (x+z,s)|} {(1 +
      \rho_A (A^{-s} z))^{\beta}} \bigg]^q \frac{ds \, dx}{|\det A|^{s}}
       \bigg)
      \\
    &= \|W_\psi f\|_{\PTalti{-\alpha'}}^q \numberthis \label{eq:TL_normequiv}
  \end{align*}
  for any $f \in \SC_0'(\R^d)$.
\\\\
\textbf{Step 2.} The estimate $|W_{\psi} f| \leq M^L_Q W_{\psi} f$ a.e.
implies that
\[
\| f \|_{\TLi} \asymp \|W_\psi f\|_{\PTalti{-\alpha'}} \leq \| M^L_Q (W_{\psi} f) \|_{\PTalti{-\alpha'}} = \| f \|_{\Co (\PTalti{-\alpha'})},
\]
for $f \in \SC_0'(\R^d)$.
\\\\
\textbf{Step 3.} We prove the remaining estimate
$\| f \|_{\Co (\PTalt{-\alpha'})} \lesssim \| f \|_{\TL}$ for
$f \in \SC_0'(\R^d)$.  In
 \cite[Equation~(5.12)]{KvVV2021anisotropic}, we already showed that
\begin{equation*}
  \bigg(|\det A|^{-(\alpha+1/2)s} \esssup_{\substack {z \in \mathbb{R}^d}}
  \frac{ | M^L_Q(W_{\psi} f) (x + z, s)|}
  {(1 + \rho_{A}(A^{-s} z))^{\beta}}  \bigg)^q
  \lesssim \sum_{k = - N}^N \bigg ( |\det A|^{- \alpha (s+k)} (\psi^{\ast})^{**}_{-(s+k), \beta} f (x)
  \bigg)^q,
\end{equation*}
with implicit constant depending on $A, \alpha, \beta,q$ and $N$, where $N \in \N$ depends on the support of $\psi$.
Combining this with Theorem~\ref{thm:norm_equiv} yields
\begin{align*}
  &\| f \|_{\Co(\PTalt{-\alpha'})}^q \\
  & = \sup_{\ell \in \Z, w \in \R^d}  \bigg(
    \dashint_{Q_{\ell}(w)} \int_{-\infty}^\ell \bigg[ |\det A|^{-(\alpha+1/2-1/q) s}
    \esssup_{z \in \mathbb{R}^d} \frac{| M^L_Q(W_{\psi} f)(x+z,s)|} {(1 +
    \rho_A (A^{-s} z))^{\beta}} \bigg]^q \frac{ds}{|\det A|^{s}} \,
    dx \bigg) \\
  & \lesssim \sup_{\ell \in \Z, w \in \R^d}  \bigg(
    \dashint_{Q_{\ell}(w)} \int_{-\infty}^\ell
    \sum_{k = - N}^N \bigg ( |\det A|^{- \alpha (s+k)} (\psi^{\ast})^{**}_{-(s+k), \beta} f (x)
    \bigg)^q ds \,
    dx \bigg) \\
  & \lesssim_N \sup_{\ell \in \Z, w \in \R^d}  \bigg(
    \dashint_{Q_{\ell}(w)} \int_{-\infty}^{\ell+N}
    \bigg ( |\det A|^{- \alpha s} (\psi^{\ast})^{**}_{-s, \beta} f (x)
    \bigg)^q ds \,
    dx \bigg)  \\
  & = \sup_{\ell \in \Z, w \in \R^d}   \bigg(
    \frac{\Lebesgue{A^{\ell+N} \Omega}}{\Lebesgue{A^{\ell} \Omega}}
    \dashint_{Q_{\ell+N}(w)} \int_{-(\ell+N)}^{\infty}
    \bigg ( |\det A|^{ \alpha s} (\psi^{\ast})^{**}_{s, \beta} f (x)
    \bigg)^q ds \,
    dx \bigg) \\
  & \lesssim_N \| f \|_{\TLi}^q,
\end{align*}
where we used the fact that
$ \frac{\Lebesgue{A^{\ell+N} \Omega}}{\Lebesgue{A^{\ell} \Omega}} =
|\det A|^N \lesssim_N 1$ in the last step.
\end{proof}

\section{Molecular decompositions} \label{sec:molecular}
In this section we use the identification of Triebel-Lizorkin spaces and associated coorbit spaces (cf. \Cref{prop:TL_coorbit}) to obtain proofs of
\Cref{thm:main2_intro} and \Cref{thm:main3_intro}.

\subsection{Molecular systems}
In addition to the left-sided local maximal function defined in Equation \eqref{eq:left_maximal}, we will also need a two-sided version, defined by
\[
M_Q F (g) = \esssup_{u,v \in Q} |F(u g v)|, \quad g \in G_A,
\]
for $F \in L^{\infty}_{\loc} (G_A)$, with $Q \subset G_A$ being a fixed relatively compact unit neighborhood.

For $r = \min \{1,q\}$ with $q \in (0,\infty]$ and the standard control weight $w = w_{\infty, q}^{\alpha, \beta} : G_A \to [1,\infty)$ provided by \Cref{lem:ControlWeights}, define the associated \emph{Wiener amalgam space} $\WLwr$ by
\[
\WLwr = \bigg\{ F \in L^{\infty}_{\loc} (G_A) : M_Q F \in L^r_w (G_A) \bigg\}.
\]
The space $\WLwr$ is independent of the choice of neighborhood $Q$ and is complete with respect to the quasi-norm $\| F \|_{\WLwr} := \| M_Q F\|_{L^r_w}$; see, e.g., \cite[Section 2]{rauhut2007wiener} and \cite[Section 2]{velthoven2022quasi}.

The space $\WLwr$ provides the class of envelopes that will be used for defining molecules.

\begin{definition} \label{def:molecule}
Let $\psi \in L^2 (\mathbb{R}^d)$ be a non-zero vector such that $W_{\psi} \psi \in \WLwr$ for the standard control weight $w : G_A \to [1,\infty)$ defined in \Cref{lem:ControlWeights}.

A countable family $(\phi_{\gamma})_{\gamma \in \Gamma}$ in $L^2 (\mathbb{R}^d)$ is called an \emph{$L^r_w$-molecular system} if there exists $\Phi \in \WLwr$ such that
\begin{align} \label{eq:molecule_def}
|W_{\psi} \phi_{\gamma} (x) | \leq \Phi (\gamma^{-1} x), \quad \gamma \in \Gamma, x \in G.
\end{align}
A function $\Phi \in \WLwr$ satisfying \eqref{eq:molecule_def} is called an \emph{envelope} for $(\phi_{\gamma})_{\gamma \in \Gamma}$.
\end{definition}

The following lemma provides an extended duality pairing for $\mathcal{S}_0' (\R^d)$. See \cite[Lemma 6.8]{KvVV2021anisotropic} for its proof.

\begin{lemma}[\cite{KvVV2021anisotropic}]
  \label{lem:ExtendedDualPairing}
  Let $\psi \in \mathcal{S}_0 (\mathbb{R}^d)$ be an admissible vector
  and let $w : G_A \to [1,\infty)$ be the standard control weight as
  defined in \Cref{lem:ControlWeights}.

  Suppose $f \in \mathcal{S}_0' (\mathbb{R}^d)$ satisfies
  $W_{\psi} f \in L^{\infty}_{1/w} (G_A)$ and
  $\phi \in L^2 (\mathbb{R}^d)$ satisfies
  $W_{\psi} \phi \in L^{1}_w (G_A)$.
  Then the \emph{extended pairing} defined by
  \[
    \langle f, \phi \rangle_{\psi} := \int_{G_A} W_{\psi} f (g)
    \overline{W_{\psi} \phi (g)} \; d\mu_{G_A} (g) \in \mathbb{C}
  \]
  is well-defined and independent of the choice of admissible vector
  $\psi \in \mathcal{S}_0 (\mathbb{R}^d)$.
\end{lemma}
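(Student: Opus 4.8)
The plan is to settle well-definedness and $\psi$-independence in turn. With the conventions $\|F\|_{L^\infty_{1/w}} = \esssup_{g \in G_A}|F(g)|/w(g)$ and $\|F\|_{L^1_w} = \int_{G_A}|F(g)|\,w(g)\,d\mu_{G_A}(g)$, a H\"older-type estimate gives $\int_{G_A}|W_\psi f(g)|\,|W_\psi \phi(g)|\,d\mu_{G_A}(g) \le \|W_\psi f\|_{L^\infty_{1/w}}\,\|W_\psi \phi\|_{L^1_w} < \infty$, so the integral defining $\langle f,\phi\rangle_\psi$ converges absolutely whenever $W_\psi f \in L^\infty_{1/w}$ and $W_\psi \phi \in L^1_w$. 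To see that these two membership conditions are themselves independent of the choice of admissible $\psi \in \mathcal{S}_0(\R^d)$ (so that the statement is not vacuous), one uses that for any two admissible $\psi_1, \psi_2 \in \mathcal{S}_0(\R^d)$ the wavelet transform $W_{\psi_2}\psi_1$, as well as its involution $(W_{\psi_2}\psi_1)^\vee$, decays faster than any polynomial in the space variable and faster than any exponential in the scale variable---a consequence of $\widehat{\psi_i}$ vanishing to infinite order at the origin, established as the corresponding estimates in \cite{KvVV2021anisotropic}---so that in particular $W_{\psi_2}\psi_1$ belongs to every weighted space $L^1_{w\,\Delta_{G_A}^{N}}(G_A)$, $N \in \Z$. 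Combining this with the $L^2$-reproducing formula $W_{\psi_2}\phi = W_{\psi_1}\phi \ast W_{\psi_2}\psi_1$ from \eqref{eq:reproL2} and its extension $W_{\psi_2}f = W_{\psi_1}f \ast W_{\psi_2}\psi_1$ to $\mathcal{S}_0'(\R^d)$, and using that submultiplicativity of $w$ turns $L^1_w(G_A)$ into a convolution algebra and $L^\infty_{1/w}(G_A)$ into a convolution module over it---the modular factors produced by the inversion being absorbed by the identity $w(g) = \Delta_{G_A}^{1/r}(g^{-1})\,w(g^{-1})$ of \Cref{lem:ControlWeights}---one obtains $W_{\psi'}f \in L^\infty_{1/w}$ and $W_{\psi'}\phi \in L^1_w$ for every admissible $\psi' \in \mathcal{S}_0(\R^d)$.

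For the independence itself, one fixes admissible $\psi_1, \psi_2 \in \mathcal{S}_0(\R^d)$ and inserts $W_{\psi_1}\phi = W_{\psi_2}\phi \ast W_{\psi_1}\psi_2$ into $\langle f,\phi\rangle_{\psi_1} = \int_{G_A} W_{\psi_1}f(g)\,\overline{W_{\psi_1}\phi(g)}\,d\mu_{G_A}(g)$. Writing out the convolution and interchanging the order of integration---legitimate by the absolute-integrability bounds above together with submultiplicativity of $w$---yields
\[
  \langle f,\phi\rangle_{\psi_1}
  = \int_{G_A} \overline{W_{\psi_2}\phi(h)}
    \left( \int_{G_A} W_{\psi_1}f(g)\,\overline{W_{\psi_1}\psi_2(h^{-1}g)}\,d\mu_{G_A}(g) \right) d\mu_{G_A}(h).
\]
In the inner integral one substitutes $g = h g'$ and uses the covariance identity $W_{\psi_1}f(h g') = W_{\psi_1}\!\big(\pi(h^{-1})f\big)(g')$, which is valid since $\pi(h^{-1})$ maps $\mathcal{S}_0(\R^d)$ continuously into itself. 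The $\mathcal{S}_0'$--$\mathcal{S}_0$ reproducing identity recorded above (applied to $\pi(h^{-1})f \in \mathcal{S}_0'(\R^d)$, the test function $\psi_2 \in \mathcal{S}_0(\R^d)$, and the analyzing vector $\psi_1$) then gives $\int_{G_A} W_{\psi_1}(\pi(h^{-1})f)(g')\,\overline{W_{\psi_1}\psi_2(g')}\,d\mu_{G_A}(g') = \langle \pi(h^{-1})f,\psi_2\rangle = \langle f, \pi(h)\psi_2\rangle = W_{\psi_2}f(h)$. Substituting back collapses the double integral to $\int_{G_A} W_{\psi_2}f(h)\,\overline{W_{\psi_2}\phi(h)}\,d\mu_{G_A}(h) = \langle f,\phi\rangle_{\psi_2}$, which is the assertion.

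The main obstacle is expected to be the weight-theoretic bookkeeping underpinning the interchanges: checking that $W_{\psi_2}\psi_1$ lands in precisely the weighted $L^1$-spaces dictated by the control weight $w$ and the modular function $\Delta_{G_A}$, and that the reservoir conditions $W_\psi f \in L^\infty_{1/w}$ and $W_\psi \phi \in L^1_w$ genuinely transfer between analyzing vectors. This is where submultiplicativity of $w$, the relation $w(g) = \Delta_{G_A}^{1/r}(g^{-1})\,w(g^{-1})$ of \Cref{lem:ControlWeights}, and the rapid-in-space, super-exponential-in-scale decay of wavelet transforms of $\mathcal{S}_0$-vectors all enter; granting these, the remainder is a formal application of the reproducing formula and Fubini's theorem.
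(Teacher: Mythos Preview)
The paper does not actually prove this lemma; the sentence immediately preceding the statement reads ``See \cite[Lemma~6.8]{KvVV2021anisotropic} for its proof,'' so there is no in-paper argument to compare against. Your outline is the standard coorbit-theory argument and is correct: H\"older for well-definedness, then the reproducing formula plus Fubini for $\psi$-independence, with the inner integral collapsing via covariance and the $\mathcal{S}_0'$--$\mathcal{S}_0$ reproducing identity. The only places requiring real care are exactly the ones you flag---the decay of $W_{\psi_2}\psi_1$ sufficient to land in the relevant weighted $L^1$-spaces, and the Fubini justification---and these are precisely what \cite[Lemma~6.8]{KvVV2021anisotropic} establishes. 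So your strategy matches what the cited companion paper does, and nothing is missing at the level of an outline.
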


\subsection{Sequence spaces}
The following definition provides a class of sequence spaces that will be used in the molecular decomposition of anisotropic Triebel-Lizorkin spaces.

\begin{definition} \label{def:peetre_seq}
Let $U \subseteq G_A$ be a relatively compact unit neighborhood with non-void interior and let $\Gamma$ be any family in  $G_A$ such that $\sup_{g \in G_A} \# (\Gamma \cap g U ) < \infty$.

For $\alpha \in \mathbb{R}$, $\beta > 0$ and $q \in (0,\infty]$,
the \emph{Peetre-type sequence space} $\PTseq (\Gamma,  U)$ consists of all sequences $c \in \mathbb{C}^{\Gamma}$ satisfying
\[
    \| c \|_{\PTseq}
    := \bigg\|
         \sum_{\gamma \in \Gamma}
           |c_{\gamma}| \Indicator_{\gamma U}
       \bigg\|_{\PTi}
    < \infty,
  \]
  where $\PTi (G_A)$ denotes the Peetre-type space defined in Definition \ref{def:PTi}.
\end{definition}

The Peetre-type sequence space $\PTseq (\Gamma, U)$ forms a quasi-Banach space with respect to the quasi-norm $\| \cdot \|_{\PTseq}$. In addition, it is
independent of the choice of the defining neighborhood $U$.
For proofs of both facts, see, e.g.,~\cite{rauhut2007wiener,feichtinger1989banach}
or \cite[Lemma~2.3.16]{VoigtlaenderPhDThesis}.

\subsection{Proofs of \Cref{thm:main2_intro} and \Cref{thm:main3_intro}}

\Cref{thm:main2_intro} and \Cref{thm:main3_intro} will be obtained from corresponding results for abstract coorbit spaces \cite{velthoven2022quasi}.

\begin{proof}[Proof of \Cref{thm:main2_intro}]
  For the fact that if $W_{\varphi} \psi \in \WLwr$ for some admissible $\varphi \in \mathcal{S}_0 (\mathbb{R}^d)$, then $W_{\phi} \psi \in \WLwr$ for all $\phi \in \mathcal{S}_0 (\mathbb{R}^d)$, cf. the proof of \cite[Lemma 6.9]{KvVV2021anisotropic}.
  Throughout, we fix an admissible $\varphi \in \SC_0 (\R^d)$ with
  $\widehat{\varphi} \in C^\infty_c(\mathbb{R}^d \setminus \{0\})$ satisfying condition
  \eqref{eq:analyzing_positive}.  Such an admissible vector exists by
  Lemma~\ref{lem:admissible_expansive}; see \cite[Proposition
  10]{currey2016integrable}.
  \\\\
  \textbf{Step 1.}
  Under the assumptions, it follows by an application of Proposition~\ref{prop:TL_coorbit}
  that   $\TLi = \Co_{\varphi} (\PTalti{-\alpha'})$.
  For applying the relevant results of \cite{velthoven2022quasi}, it will next be shown that
  $\Co_{\varphi} (\PTalti{-\alpha'})$ can be identified with the abstract
  coorbit spaces used in \cite{velthoven2022quasi}.  To this end, note
  that $\PTalti{-\alpha'}$ is a solid, translation invariant
  quasi-Banach space by Lemma~\ref{lem:TranslationNormBounds}, which
  satisfies the $r$-norm property by Lemma~\ref{lem:r-norm-property}.
  The standard control weight
  $w = w^{-\alpha', \beta}_{\infty,q}: G_A \to [1, \infty)$
  of Lemma~\ref{lem:ControlWeights} is a
  \emph{strong control weight} in the sense of \cite[Definition
  3.1]{velthoven2022quasi}, which can be dominated by a linear combination of
  standard envelopes.

  Define the space
  $\mathcal{H}^1_{w} (\psi) := \big\{ f \in L^2
  (\mathbb{R}^d) : W_{\psi} f \in L^1_{w} (G_A) \big\}$ and
  equip it with the norm
  $\| f \|_{\mathcal{H}^1_{w} (\psi)} := \| W_{\psi} f
  \|_{L^1_{w}}$. Set
  $\mathcal{R}_{w} (\psi) := ( \mathcal{H}^1_{w}
  (\psi) )^*$ to be its anti-dual space.  Since both
  $W_\psi \psi, W_\varphi \psi \in \WLwr$, it follows by similar arguments as proving
  \cite[Lemma~D.1]{KvVV2021anisotropic} that
  \begin{equation*}
    \Co\nolimits_{\psi}^{\mathcal{H}}(\PTalti{-\alpha'})
    := \big\{ f  \in \mathcal{R}_{w} (\psi) :
    M^L_Q V_{\psi} f \in \PTalti{-\alpha'} \big\}
    = \Co\nolimits_{\varphi} (\PTalti{-\alpha'}),
  \end{equation*}
  where $V_{\psi} f = \langle f, \pi (\cdot) \psi \rangle_{\mathcal{R}_w, \mathcal{H}^1_w}$ denotes the conjugate-linear pairing.
  In addition, for the unique extension
  $\widetilde{f} \in \Co_{\psi}^{\mathcal{H}}
  (\PTalti{-\alpha'})$ of
  $f \in \Co_{\psi} (\PTalti{-\alpha'})$, it holds that
  \begin{equation}
    \label{eq:dual-pairing-identification}
    \langle \widetilde{f}, \phi \rangle_{\mathcal{R}_{w},
      \mathcal{H}^1_{w}} = \langle f, \phi \rangle_{\varphi}, \qquad \phi
    \in \mathcal{H}^1_{w} (\varphi),
  \end{equation}
  where $\langle \cdot , \cdot \rangle_{\varphi}$ denotes the extended
  pairing of \Cref{lem:ExtendedDualPairing}.
\\\\
\textbf{Step 2.}
  Applying \cite[Theorem~6.14]{velthoven2022quasi} yields a compact
  unit neighborhood $U \subset G_A$ such that for any set $\Gamma \subset G_A$ satisfying \eqref{eq:well-spread}, there exists a family
  $(\phi_\gamma)_{\gamma \in \Gamma}$ of $L^r_w$-localized molecules
  in $L^2(\R^d)$ such that any
  $\widetilde{f} \in \Co_{\psi}^{\mathcal{H}}
  (\PTalti{-\alpha'})$ can be represented as
  \begin{equation}
    \widetilde{f}
    = \sum_{\gamma \in \Gamma}
    \langle \widetilde{f}, \pi(\gamma) \psi \rangle_{\mathcal{R}_{w},
      \mathcal{H}^1_{w}} \, \phi_\gamma
    = \sum_{\gamma \in \Gamma}
    \langle \widetilde{f}, \phi_\gamma \rangle_{\mathcal{R}_{w},
      \mathcal{H}^1_{w}}  \, \pi(\gamma) \psi,
    \label{eq:FrameProofTargetIdentity}
  \end{equation}
  with unconditional convergence with respect to the weak$^*$-topology
  on $\mathcal{R}_{w}(\psi)$.

  Given any $f \in \TLi = \Co_{\varphi} (\PTalti{-\alpha'})$,
  we apply \eqref{eq:FrameProofTargetIdentity} to its unique
  extension $\widetilde{f} \in \Co_{\psi}^{\mathcal{H}}
  (\PTalti{-\alpha'})$.  Since
  $\WLwr \hookrightarrow L^1_{w}$ by
  \cite[Lemma~3.3(ii)]{velthoven2022quasi}, we have that
  $\psi \in \mathcal{H}^1_{w}$ and that
  $\phi_\gamma \in \mathcal{H}^1_{w}$.  Consequently, the dual pairings in
  \eqref{eq:FrameProofTargetIdentity} agree with the extended pairing
  of \Cref{lem:ExtendedDualPairing}; see Equation
  \eqref{eq:dual-pairing-identification}. Since $\SC_0 \hookrightarrow \mathcal{H}_{w}^1$,
   restricting both sides of \eqref{eq:FrameProofTargetIdentity}
  to $\SC_0$ yields the desired expansion with unconditional
  convergence in the weak$^*$-topology of $\SC_0' \cong \SP$.
\end{proof}

\begin{proof}[Proof of \Cref{thm:main3_intro}]
  As shown in Step 1 of the proof of \Cref{thm:main2_intro}, the map $f \mapsto f|_{\mathcal{S}_0}$ is a well-defined bijection from
  $ \Co_{\psi}^{\mathcal{H}}(\PTalti{-\alpha'})$ into $\TLi$,
  with $\Co^{\mathcal{H}}_{\psi} ( \PTalti{-\alpha'})$ denoting the abstract coorbit space as defined in \cite{velthoven2022quasi}. Therefore, applying \cite[Theorem~6.15]{velthoven2022quasi} yields a compact
  unit neighborhood $U \subset G_A$ such that for any
  set $\Gamma \subset G_A$ satisfying \eqref{eq:separated}, there exists a family
  $(\phi_\gamma)_{\gamma \in \Gamma}$ in
  $ \overline{\Span} \{ \pi(\gamma) \psi : \gamma \in \Gamma \}
  \subset L^2(\R^d)$ of $L^r_w$-localized molecules such that the
  moment problem
  \begin{equation*}
    \langle \widetilde{f}, \phi(\gamma)\psi \rangle_{\mathcal{R}_{w},
      \mathcal{H}^1_{w}} = c_\gamma, \qquad \gamma \in \Gamma,
  \end{equation*}
  admits the solution
  $\widetilde{f} := \sum_{\gamma \in \Gamma} c_\gamma \phi_\gamma \in
  \Co_{\psi}^{\mathcal{H}}(\PTalti{-\alpha'})$ for any
  sequence
  $(c_\gamma)_{\gamma \in \Gamma} \in
  \PTseq[-\alpha'](\Gamma)$.  Furthermore, by
  arguing as in the proof of \Cref{thm:main2_intro}, we can restrict
  to
  \[ f:= \widetilde{f}|_{\mathcal{S}_0} \in \Co\nolimits_{\varphi}
  (\PTalti{-\alpha'}) = \TLi \] to obtain the desired claim.
\end{proof}

\section*{Acknowledgements}
S.~K. ~was supported by project I 3403 of the Austrian Science Fund (FWF).
J.v.V. gratefully acknowledges support from the Austrian Science Fund (FWF) project J-4445.
F.\ Voigtlaender acknowledges support by the German Research Foundation (DFG)
in the context of the Emmy Noether junior research group VO 2594/1--1.

\end{document}